\theoremstyle{plain}
\newtheorem{theorem}{Theorem}[section]
\newtheorem{lemma}[theorem]{Lemma}
\newtheorem{assumption}{Assumption}
\theoremstyle{remark}
\newtheorem{remark}{Remark}
\numberwithin{equation}{section}
\theoremstyle{plain}
\newcounter{appthm}
\newtheorem{applemma}[appthm]{Lemma}
\newtheorem{appproposition}[appthm]{Proposition}
\newtheorem{appassumption}[appthm]{Assumption}
\theoremstyle{remark}
\newtheorem{appremark}[appthm]{Remark}
\newcommand{\E}{\mathbb{E}}
\title{Convergence of the Markovian Iteration for Coupled FBSDEs \\
via a Differentiation Approach}
\author{
Zhipeng Huang 
\thanks{Corresponding author. Mathematical Institute, Utrecht University, Utrecht, The Netherlands (\texttt{z.huang1@uu.nl}).}
\and
Cornelis W.~Oosterlee
\thanks{Mathematical Institute, Utrecht University, Utrecht, The Netherlands (\texttt{c.w.oosterlee@uu.nl}).}}
\date{}
\begin{document}

\maketitle

\begin{abstract}
In this paper, we investigate the Markovian iteration method for solving coupled forward-backward stochastic differential equations (FBSDEs) with a fully coupled drift term of the form
$b(t,X_t,Y_t,Z_t)$. An FBSDE system typically involves three stochastic processes: the forward process $X$, the backward process $Y$ representing the solution, and the $Z$ process corresponding to the scaled derivative of $Y$. Previous work by Bender and Zhang (2008) established convergence results for iterative schemes for $Y$-coupled FBSDEs. However, extending these results to equations with $Z$ coupling presents significant challenges, particularly in obtaining a uniform control of the Lipschitz constants of the decoupling fields across iterations and time steps within a fixed-point framework.

To overcome this issue, we propose a novel differentiation-based method for handling the $Z$ process. This approach enables better control of the Lipschitz constants of decoupling fields, facilitating the well-posedness of the discretized FBSDE system with fully coupled drift. We rigorously prove the convergence of our Markovian iteration method in this more complex setting. Finally, we develop an efficient algorithm for computing the resulting numerical scheme, and numerical experiments confirm the theoretical findings and demonstrate the effectiveness and accuracy of the proposed methodology.
\end{abstract}

\noindent\textbf{AMS subject classifications:}
65C30, 60H35, 60H10.

\medskip

\noindent\textbf{Keywords:}
Forward-backward stochastic differential equations, Markovian iteration, coupled FBSDEs, convergence analysis.

\tableofcontents 

\section{Introduction}\label{sec1}

In this paper, we study the numerical solution of a system of coupled forward-backward stochastic differential equations (FBSDEs) on a complete probability space $(\Omega, \mathcal{F}, \mathbb{P})$ with the natural filtration generated by an $d_3$-dimensional Brownian motion $\{W_t\}_{0\leq t \leq T}$:
\begin{equation}\label{eq:continuousFBSDE} 
\left\{
    \begin{aligned}
        X_t & = x_0 + \int_0^t b(s, X_s, Y_s, Z_s) \mathrm{d}s + \int_0^t \sigma(s, X_s, Y_s) \mathrm{d}W_s,\\
        Y_t & = g(X_T) + \int_t^T f(s, X_s, Y_s, Z_s) \mathrm{d}s - \int_t^T Z_s \mathrm{d} W_s,
    \end{aligned}
\right.
\end{equation}
where $(X, Y, Z) \coloneqq \{(X_t, Y_t, Z_t)\}_{0\leq t \leq T}$ is a triplet of $(\mathbb{R}^{d_1} \times \mathbb{R}^{d_2} \times \mathbb{R}^{{d_2} \times {d_3}})$-valued and adapted stochastic processes. The functions $b: [0,T]\times \mathbb{R}^{d_1} \times \mathbb{R}^{d_2} \times \mathbb{R}^{d_2 \times d_3} \to \mathbb{R}^{d_1}$, $\sigma: [0,T]\times \mathbb{R}^{d_1} \times \mathbb{R}^{d_2} \to \mathbb{R}^{d_1 \times d_3}$, $f:[0,T]\times \mathbb{R}^{d_1} \times \mathbb{R}^{d_2} \times \mathbb{R}^{d_2 \times d_3} \to \mathbb{R}^{d_2}$, and $g: \mathbb{R}^{d_1} \to \mathbb{R}^{d_2}$ are deterministic mappings. The triplet $(X, Y, Z)$ is a solution if equation \eqref{eq:continuousFBSDE} holds $\mathbb{P}$-almost surely and satisfies the required integrability conditions; see \cite{zhang2017backward, ma1999forward}. Furthermore, the solution $(X, Y, Z)$ is linked to a quasi-linear PDE via the nonlinear Feynman-Kac formula:
\begin{equation}\label{eq:quasi-linear-pde}
\left\{
\begin{aligned}
& \partial_t u^i  + \frac{1}{2} \partial_{xx} u^i: \sigma \sigma^{\top} (t, x, u) + \partial_x u^i b(t, x, u, \partial_x u \sigma(t, x, u) ) \\
& \qquad + f^i (t, x, u, \partial_x u \sigma(t, x, u) )=0,  \quad \forall i=1, \dots, d_2,\\
& u(T, x) = g(x).
\end{aligned}
\right.
\end{equation}
with
\begin{equation}\label{eq: decouple}
Y_t = u (t, X_t), \quad Z_t = \partial_x u(t, X_t) \sigma(t, X_t, u(t, X_t)) \coloneqq v(t, X_t),
\end{equation}
and the mappings $u$ and $v$ are referred to as decoupling fields in the FBSDEs literature.

The study of FBSDEs, or BSDEs in the decoupled case, dates back to the seminal work in \cite{bismut1973conjugate} and later \cite{pardoux1990adapted}, who first investigated general nonlinear BSDEs. Due to their connection with quasi-linear PDEs through the nonlinear Feynman-Kac formula, FBSDEs have broad applications in mathematical finance, physics, and stochastic control. Significant progress has been made in their solution theory following \cite{pardoux1990adapted}, including the method of contraction mapping \cite{antonelli_backward-forward_1993, pardoux1999forward}, which establishes well-posedness under standard assumptions when $T$ is small; the four-step scheme of \cite{ma_solving_1994}, which removes the restriction on $T$ under certain regularity conditions; and the method of continuation \cite{hu1995solution, peng1999fully}, which extends to non-Markovian FBSDEs under a different set of assumptions. For a comprehensive discussion of these methods, see \cite{ma1999forward}.

Despite the rich theoretical background, numerical methods for FBSDEs remain a highly relevant area of research since these equations are often analytically intractable. Various methods have been developed for the decoupled case, including the Malliavin calculus method \cite{bouchard2004malliavin, bouchard2004discrete, crisan2010monte}, the quantization method \cite{beck2020overview, pages2018improved}, and regression-based approaches \cite{lemor2006rate, gobet2005regression}. For the coupled case, numerical approximation is more challenging due to the intertwined nature of the forward and backward equations. Notable methods include the four-step scheme-based approach \cite{milstein_numerical_2006}, the Markovian iteration scheme \cite{bender2008time}, and Fourier expansion techniques \cite{huijskens_efficient_2016, negyesi2025numerical}. More recently, neural network-based algorithms, such as the Deep BSDE method \cite{e_deep_2017, han2018, hanlong2020}, have gained popularity due to their high accuracy and effectiveness in handling high-dimensional problems. Establishing the convergence of the Deep BSDE method, as in \cite{hanlong2020}, requires proving the well-posedness of discretized FBSDEs and their error estimates. At the time, results were available only for the $Y$-coupled case \cite{bender2008time}, limiting the convergence analysis in \cite{hanlong2020} to that setting. Recently, the authors in \cite{reisinger2024posteriori} extended the setting to the fully coupled case under a different set of assumptions to facilitate the use of the method of continuation.

Motivated by the advances in Deep BSDE algorithms and the Markovian iteration method of \cite{bender2008time}, this paper aims to extend the Markovian iteration framework to FBSDEs with $Z$-coupling. As observed in \cite{bender2008time}, incorporating the $Z$ process into the forward equation introduces substantial difficulties, primarily due to the need for a uniform control of the Lipschitz constants of the decoupling fields across both time steps and iterations. To address this challenge, we employ a differentiation-based treatment of the $Z$ process inspired by the Feynman--Kac formula. Under generalized weak coupling and monotonicity conditions, we are able to adapt and extend the techniques in \cite{bender2008time} for establishing the convergence of the numerical scheme in the presence of $Z$-coupling, and deriving an associated error estimate. In addition, we develop an efficient algorithm for computing the resulting numerical scheme. As a byproduct of our analysis, the convergence result for the discretized scheme provides theoretical support for the application of Deep BSDE methods to FBSDEs with $Z$-coupling.

The remainder of this paper is organized as follows. In Section \ref{sec2}, we extend the Markovian iteration method to $Z$-coupled FBSDEs and discuss the challenges associated with controlling the Lipschitz constants of the approximated decoupling fields, which motivates the introduction of a differentiation-based approach. Section \ref{sec3} establishes the convergence of the proposed iterative scheme under this framework and derives the corresponding generalized weak coupling and monotonicity conditions for the $Z$-coupling case. Finally, Section \ref{sec4} presents an efficient algorithm for computing the resulting numerical scheme, and provides numerical examples illustrating the convergence behavior with respect to both the time discretization and the iteration steps, thereby confirming the theoretical results.

\section{The Markovian iteration for FBSDEs with $Z$-coupling}\label{sec2}

To extend the Markovian iteration method to FBSDEs with $Z$-coupling, we first introduce some notation. Let $m$ denote the iteration index and $N$ the number of time steps. We consider the uniform time grid $\pi = \{t_i = ih: i = 0,1,\ldots,N\}$ with step size $h:=T/N$. Following the spirit of \cite{bender2008time}, we propose the following extended Markovian iteration scheme for the coupled FBSDEs of the form \eqref{eq:continuousFBSDE}:
\begin{equation}\label{full_scheme}
\left\{
\begin{aligned}
& X_0^{\pi,m} = x_0, \\
& X_{i+1}^{\pi,m}
= X_i^{\pi,m}
+ b\bigl(t_i,X_i^{\pi,m},u_i^{\pi,m-1}(X_i^{\pi,m}),v_i^{\pi,m-1}(X_i^{\pi,m})\bigr)h \\
& \hspace{4.3cm}
+ \sigma\bigl(t_i,X_i^{\pi,m},u_i^{\pi,m-1}(X_i^{\pi,m})\bigr)\Delta W_i, \\
& Y_N^{\pi,m} = g(X_N^{\pi,m}), \\
& \hat{Z}_i^{\pi,m}
= h^{-1}\E_{t_i}\bigl[Y_{i+1}^{\pi,m}\Delta W_i\bigr], \\
& v_i^{\pi,m}(X_i^{\pi,m}) = \hat{Z}_i^{\pi,m},
\qquad
v_i^{\pi,m}(\cdot)
= \partial_x u_i^{\pi,m}(\cdot)
\sigma\bigl(t_i,\cdot,u_i^{\pi,m}(\cdot)\bigr), \\
& Y_i^{\pi,m}
= \E_{t_i}\left[
Y_{i+1}^{\pi,m}
+ f\bigl(t_i,X_i^{\pi,m},Y_{i+1}^{\pi,m},\hat{Z}_i^{\pi,m}\bigr)h
\right], \\
& u_i^{\pi,m}(X_i^{\pi,m}) = Y_i^{\pi,m}.
\end{aligned}
\right.
\end{equation}
Here, $u_i^{\pi,m}$ and $v_i^{\pi,m}$ approximate the decoupling fields $u(t_i,\cdot)$ and $v(t_i,\cdot)$, respectively, and $\E_{t_i}[\cdot]:=\E[\cdot\mid\mathcal{F}_{t_i}]$. Throughout, the subscript $i$ indicates evaluation at the time point $t_i$.

Compared with the Markovian iteration without $Z$-coupling in \cite{bender2008time}, the scheme \eqref{full_scheme} contains two extensions. First, the forward equation additionally depends on $v_i^{\pi,m-1}$, reflecting the dependence of the forward dynamics on $Z$. Second, in the backward phase we impose the relation
\begin{equation}\label{differentiation_re}
v_i^{\pi,m}(\cdot)
=
\partial_x u_i^{\pi,m}(\cdot)\,
\sigma\bigl(t_i,\cdot,u_i^{\pi,m}(\cdot)\bigr),
\end{equation}
motivated by the Feynman--Kac representation. Hence, $\hat{Z}_i^{\pi,m}$ and $Y_i^{\pi,m}$ cannot be computed independently: both are linked through the same approximation $u_i^{\pi,m}$ and must be determined simultaneously at each backward time step. Section~\ref{sec4} discusses how this relation is preserved in the numerical implementation.

We next explain why the differentiation relation \eqref{differentiation_re} is important for the convergence analysis. Consider first the $Y$-coupled case of \cite{bender2008time}. The backward conditional expectations give the dependency chain
\begin{equation}
\hat{Z}_i^{\pi,m}
\Rightarrow
(X_i^{\pi,m},Y_{i+1}^{\pi,m}),
\qquad
Y_i^{\pi,m}
\Rightarrow
(X_i^{\pi,m},Y_{i+1}^{\pi,m},\hat{Z}_i^{\pi,m})
\Rightarrow
(X_i^{\pi,m},Y_{i+1}^{\pi,m}).
\end{equation}
Since the forward equation depends on $Y$, this chain extends to
\begin{equation}
Y_i^{\pi,m}
\Rightarrow
(X_i^{\pi,m},Y_{i+1}^{\pi,m})
\Rightarrow
(X_{i-1}^{\pi,m},Y_{i-1}^{\pi,m-1},Y_{i+1}^{\pi,m}).
\end{equation}
The terminal condition and forward dynamics imply that $Y_{i+1}^{\pi,m}$ ultimately depends on the process $Y^{\pi,m-1}$ from the preceding iteration. Thus,
\begin{equation}
Y^{\pi,m}
\Rightarrow
Y^{\pi,m-1}
\Rightarrow
\cdots
\Rightarrow
Y^{\pi,0}.
\end{equation}
The convergence analysis can therefore be formulated in terms of the sequence of approximating decoupling fields $u_i^{\pi,m}$.

With $Z$-coupling, the dependency structure changes. The forward state $X_i^{\pi,m}$ now depends on
$(X_{i-1}^{\pi,m},Y_{i-1}^{\pi,m-1},\hat{Z}_{i-1}^{\pi,m-1})$, and consequently
\begin{equation}
\begin{aligned}
\hat{Z}_i^{\pi,m}
&\Rightarrow
(X_{i-1}^{\pi,m},Y_{i-1}^{\pi,m-1},
\hat{Z}_{i-1}^{\pi,m-1},Y_{i+1}^{\pi,m}), \\
Y_i^{\pi,m}
&\Rightarrow
(X_{i-1}^{\pi,m},Y_{i-1}^{\pi,m-1},
\hat{Z}_{i-1}^{\pi,m-1},Y_{i+1}^{\pi,m}).
\end{aligned}
\end{equation}
The approximating fields $u_i^{\pi,m}$ and $v_i^{\pi,m}$ are therefore intertwined across both the iteration index $m$ and the time index $i$. A direct convergence analysis would require simultaneous control of the two sequences and their mutual dependence, which is considerably more involved than in the $Y$-coupled setting.

The relation \eqref{differentiation_re} resolves this difficulty. Recall the Feynman--Kac relation \eqref{eq: decouple}. If $u_i^{\pi,m}$ approximates the decoupling field $u(t_i,\cdot)$ sufficiently accurately, then its spatial derivative naturally determines an approximation of the corresponding $Z$-decoupling field, provided that the associated PDE admits a classical solution and the chosen function approximator is sufficiently regular and expressive. We therefore impose \eqref{differentiation_re} directly in the scheme.

This construction links the regularity of $v_i^{\pi,m}$ to that of $u_i^{\pi,m}$. In particular, with an appropriate choice of function approximator, the Lipschitz regularity of $v_i^{\pi,m}$ can be controlled through the regularity of $u_i^{\pi,m}$. The problem of controlling two intertwined sequences is thereby reduced to controlling a single sequence of decoupling-field approximations. This is the central mechanism that allows the analytical framework of \cite{bender2008time} to be extended to FBSDEs with $Z$-coupling.

Section~\ref{sec3} establishes the convergence of the generalized Markovian iteration scheme \eqref{full_scheme}. Section~\ref{sec4} then develops an efficient numerical implementation that preserves the differentiation relation \eqref{differentiation_re} by construction.

\section{Convergence analysis}\label{sec3}

With the proposed differentiation-based treatment of the $Z$ process and additional assumptions on the decoupling fields, we are able to adapt and extend the analytical techniques of \cite{bender2008time} to our setting. These assumptions are introduced in Section \ref{sec31}. Under this framework, we derive uniform bounds for the Lipschitz constants and linear growth coefficients of the decoupling fields in Sections \ref{sec32} and \ref{sec33}. These results are then used to establish the convergence of the Markovian iteration in Section \ref{sec34}. Finally, Section \ref{sec35} derives the corresponding error estimates for the approximate solution.

\subsection{Preliminaries}\label{sec31}
 
For simplicity, we carry out the convergence analysis in the one-dimensional case, but it can be easily extended to the multi-dimensional case. We require the following regularity assumptions.
 
\begin{assumption}\label{assume:lip}
We denote by $\Delta x \coloneqq x_1-x_2$,  $\Delta y \coloneqq y_1-y_2$, $\Delta z \coloneqq z_1-z_2$, and assume that
\begin{enumerate}[label=(\arabic*).]
\item There exist real constants $k_b, k_f$, such that
\begin{equation}
\begin{aligned}
{\left[b\left(t, x_1, y, z\right)-b\left(t, x_2, y, z\right)\right] \Delta x } & \leq k_b|\Delta x|^2, \\
{\left[f\left(t, x, y_1, z\right)-f\left(t, x, y_2, z\right)\right] \Delta y } & \leq k_f|\Delta y|^2.
\end{aligned}
\end{equation}

\item $b, \sigma, f, g$ are uniformly Lipschitz continuous with respect to $(x, y, z)$. In particular, there are constants $K, b_y, b_z, \sigma_x, \sigma_y, f_x, f_z$ and $g_x$, such that
\begin{equation}
\begin{aligned}
\left|b\left(t, x_1, y_1, z_1 \right)-b\left(t, x_2, y_2, z_2 \right)\right|^2 & \leq K|\Delta x|^2+b_y|\Delta y|^2 + b_z|\Delta z|^2, \\
\left|\sigma\left(t, x_1, y_1\right)-\sigma\left(t, x_2, y_2\right)\right|^2 & \leq \sigma_x|\Delta x|^2+\sigma_y|\Delta y|^2, \\
\left|f\left(t, x_1, y_1, z_1\right)-f\left(t, x_2, y_2, z_2\right)\right|^2 & \leq f_x|\Delta x|^2+K|\Delta y|^2+f_z|\Delta z|^2, \\
\left|g\left(x_1\right)-g\left(x_2\right)\right|^2 & \leq g_x|\Delta x|^2.
\end{aligned}
\end{equation}

\item $b(t, 0,0), \sigma(t, 0,0), f(t, 0,0,0)$ are bounded. In particular, there are constants $b_0, \sigma_0, f_0$ and $g_0$, such that
\begin{equation}
\begin{aligned}
|b(t, x, y, z)|^2 & \leq b_0+K|x|^2+b_y|y|^2 +  b_z|z|^2,  \\
|\sigma(t, x, y)|^2 & \leq \sigma_0+\sigma_x|x|^2+\sigma_y|y|^2, \\
|f(t, x, y, z)|^2 & \leq f_0+f_x|x|^2+K|y|^2+f_z|z|^2, \\
|g(x)|^2 & \leq g_0+g_x|x|^2.
\end{aligned}
\end{equation}
\end{enumerate}
\end{assumption}

\begin{assumption}\label{assume:holder}
The mappings $b$, $\sigma$ and $f$ are uniformly Hölder-$\frac{1}{2}$ continuous with respect to $t$.
\end{assumption}

\begin{assumption}\label{assume:weakandmono}
One of the following generalized weak and monotonicity conditions holds, 
\begin{enumerate}[label = (\arabic*).] 
    \item Small time duration, that is, $T>0$ is small.
    \item Weak coupling of $Y$ into the forward SDE, that is, $b_y$, $b_z$ and $\sigma_y$ are small. 
    \item Weak coupling of $X$ into the backward SDE, that is, $f_x$, $g_x$ and $b_z$ are small.  
    \item $f$ is strongly decreasing in $y$, that is, $k_f$ is very negative, and in addition $b_z$ is small. 
    \item $b$ is strongly decreasing in $x$, that is, $k_b$ is very negative, and in addition $b_z$ is small. 
\end{enumerate}
\end{assumption}

\begin{assumption}\label{assume:bounded-sigma}
  The diffusion function is bounded, i.e., $| \sigma(\cdot, \cdot, \cdot) |^2\leq \Sigma$.   
\end{assumption}

\begin{assumption}\label{assume:pde}
  The PDE \eqref{eq:quasi-linear-pde} admits a classical solution, $u(t,\cdot) \in C^2_b $ for every $t\in[0, T]$.   
\end{assumption}
 
\begin{remark} Compared with the assumptions in \cite{bender2008time}, the conditions regarding the constant $b_z$ are natural consequences of the extra $Z$-coupling in the forward SDE. Moreover, Assumptions \ref{assume:bounded-sigma} and \ref{assume:pde} are added to handle the convergence in the $Z$-coupling case, and the main purpose of these two assumptions is to develop a relation between the two decoupling fields, as we shall see in Remarks \ref{remark:lipconstants} and \ref{remark:growthcoeff} to follow.
\end{remark}

The standard Assumption \ref{assume:lip} shall be in force without further notice in this paper. To perform convergence analysis of the scheme \eqref{full_scheme}, we first study the behaviour of the approximate decoupling fields over a single iteration. To this end, we define the operators $F_y^{\pi}$ and $F_z^{\pi}$ corresponding to the conditional expectation computations with the enforced differentiation relation in \eqref{full_scheme}, respectively. That is, $F_y^{\pi}( u^{\pi, m} ) = u^{\pi, m+1}  $ and $F_z^{\pi}( v^{\pi, m} ) = v^{\pi, m+1} $. Moreover, we shall use the notations $(\varphi_i, \xi_i) \coloneqq (  u_i^{\pi, m}, v_i^{\pi, m})$ and $(\Phi_i, \psi_i) \coloneqq (  u_i^{\pi, m+1}, v_i^{\pi, m+1})$, for a given $m\geq 0$, whenever such simplified notation is convenient in the proofs.

\subsection{Lipschitz continuity}\label{sec32}

In this subsection, we derive uniform bounds for the Lipschitz constants of the decoupling fields $u_i^{\pi,m}$ and $v_i^{\pi,m}$ over all time steps and iterations. To this end, for any uniformly Lipschitz continuous function $\varphi$, we denote by $L(\varphi)$ the square of its Lipschitz constant. Moreover, for a sequence of Lipschitz functions $\{\varphi_i\}_{0\le i\le N}$, we write
$
L(\varphi) \coloneqq \sup_{0\le i\le N} L(\varphi_i)
$
for the supremum of the corresponding squared Lipschitz constants.

An important consequence of the stated assumptions is given below.

\begin{remark}\label{remark:lipconstants}
Let $\varphi$ and $\xi$ be uniformly Lipschitz continuous functions, and suppose that $\xi(x) = \partial_x \varphi(x) \sigma(t, x, \varphi(x))$, for any $t\geq 0$ and $x\in \mathbb{R}$. Then, with Assumptions \ref{assume:lip} and \ref{assume:bounded-sigma}, and assuming that $\varphi \in C_b^2$, we obtain the following bound for a squared Lipschitz constant of $\xi$,
\begin{equation}\label{ineq:liprelation}
| \xi(x_1) - \xi(x_2) |^2
\leq  (2\sigma_x + 2\sigma_y + 2\Sigma) L(\varphi) |x_1 - x_2|^2 
\end{equation}
Here, $L(\varphi)$ can be chosen to depend on the uniform bounds of the first and second derivatives of $\varphi$. Therefore, we can take $L(\xi)=(2\sigma_x+2\sigma_y+2\Sigma)L(\varphi)$ as a squared Lipschitz constant of $\xi$.

It is easy to check that if Assumption \ref{assume:pde} holds, then we can choose $(\varphi, \xi)$ to be the decoupling fields specified by \eqref{eq: decouple}, and relation \eqref{ineq:liprelation} holds for every $0\leq t\leq T$. Moreover, it can also be verified that if we choose $(\varphi, \xi) = (u_i^{\pi, m}, v_i^{\pi, m})$ as in the scheme \eqref{full_scheme}, then $(u_i^{\pi, m}, v_i^{\pi, m})$ also enjoys relation \eqref{ineq:liprelation}, due to the here proposed differentiation setting. This differentiation setting is motivated by the expectation that if $(u_i^{\pi, m}, v_i^{\pi, m})$ approximates the solution \eqref{eq: decouple} with sufficient accuracy, then the essential properties of the true solution will be inherently captured by the approximation.
\end{remark}

In what follows, we collect the constants $L_0$ and $L_1$, as well as the functions $A_i$, without explicitly specifying the dependence on $h$. These quantities are defined in the proofs throughout this subsection and will be used to state our results.
\begin{equation}\label{def:L0L1}
\begin{aligned}
& L_0 \coloneqq\left[b_y+\sigma_y + (2\sigma_x+2\sigma_y+2\Sigma)  b_z \right]\left[g_x+f_x T\right] T e^{\left[b_y+\sigma_y + (2\sigma_x+2\sigma_y+2\Sigma)  b_z \right]\left[g_x+f_x T\right] T+\left[2 k_b+2 k_f+3+\sigma_x+f_z\right] T}, \\
& L_1 \coloneqq\left[g_x+f_x T\right]\left[e^{\left[b_y+\sigma_y+ (2\sigma_x+2\sigma_y+2\Sigma)  b_z \right]\left[g_x+f_x T\right] T+\left[2 k_b+2 k_f+3+\sigma_x+f_z\right] T+1} \vee 1\right] .
\end{aligned}
\end{equation}
\begin{equation}\label{eq:constants_A}
\begin{aligned}
& A_1 \coloneqq 2 k_b+\sigma_x+1+K h ,\qquad
A_2 \coloneqq b_y+\sigma_y+K h  ,\qquad
A_3 \coloneqq \lambda_2+\lambda_3+\left(1+\lambda_2^{-1}\right) K h , \\
& A_4 \coloneqq 2 k_f+1+\lambda_3^{-1} f_z+\left(1+\lambda_2^{-1}\right) K h ,\qquad
A_5 \coloneqq f_x+\left(1+\lambda_2^{-1}\right) K h 
\end{aligned}
\end{equation}

To derive the main results of this subsection, we need the following two standard lemmas, where the first lemma is an extension of a similar lemma in \cite{bender2008time}, whereas the second essentially remains the same. The proofs are rather straightforward, and therefore they are omitted.

\begin{lemma}\label{lem:estimate_two_X} 
We fix index $i$, and for $l=1, 2$, let
\begin{equation}
X_{i+1}^l \coloneqq X_i^l + b(t_i, X_i^l, \varphi^l (X_i^l), \xi^l (X_i^l) ) h + \sigma(t_i, X_i^l, \varphi^l (X_i^l)) \Delta W_{i},
\end{equation}
where $X_i^l$ is $\mathcal{F}_{t_i}$-measurable. Assume $\varphi^1$ and $\xi^1$ are uniformly Lipschitz continuous. Then, for any $\lambda_0>0$ and $\lambda_1>0$, we have
\begin{equation}
\begin{aligned}
\E_{t_i} \left[ \left|X_{i+1}^1-X_{i+1}^2\right|^2\right]    
& \leq  \left[1 + (A_1+1) h + (1+\lambda_1) A_2 h L (\varphi^1 ) + (1+\lambda_0) (b_z + b_z h) h L (\xi^1) \right]  \left|X_i^1-X_i^2\right|^2   \\
& + \left(1+\lambda_1^{-1}\right) A_2 h\left|\varphi^1\left(X_i^2\right)-\varphi^2\left(X_i^2\right)\right|^2 \\
& + \left(1+\lambda_0^{-1}\right) (b_z + b_z h) h \left|\xi^1\left(X_i^2\right)-\xi^2\left(X_i^2\right)\right|^2, 
\end{aligned}
\end{equation}
and in the case that $\varphi^1 = \varphi^2$ and $\xi^1 = \xi^2$, we set $\lambda_0 = \lambda_1 = 0$ to obtain
\begin{equation}
\E_{t_i} \left[ \left|X_{i+1}^1-X_{i+1}^2\right|^2\right]   
\leq \left[1 + (A_1+1) h + A_2 h L (\varphi^1 ) + (b_z + b_z h) h L (\xi^1) \right]  \left|X_i^1-X_i^2\right|^2.
\end{equation}
\end{lemma}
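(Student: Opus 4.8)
The plan is to start from the one-step recursion and reduce the conditional second moment to two $\mathcal{F}_{t_i}$-measurable pieces. Writing $\Delta X_i \coloneqq X_i^1 - X_i^2$, $\delta b \coloneqq b(t_i, X_i^1, \varphi^1(X_i^1), \xi^1(X_i^1)) - b(t_i, X_i^2, \varphi^2(X_i^2), \xi^2(X_i^2))$ and $\delta\sigma \coloneqq \sigma(t_i, X_i^1, \varphi^1(X_i^1)) - \sigma(t_i, X_i^2, \varphi^2(X_i^2))$, the increment is $X_{i+1}^1 - X_{i+1}^2 = \Delta X_i + \delta b\, h + \delta\sigma\, \Delta W_i$. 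Since $\Delta X_i$, $\delta b$ and $\delta\sigma$ are $\mathcal{F}_{t_i}$-measurable while $\Delta W_i$ is independent of $\mathcal{F}_{t_i}$ with $\E_{t_i}[\Delta W_i]=0$ and $\E_{t_i}[|\Delta W_i|^2]=h$, the cross term vanishes and I obtain
\[
\E_{t_i}[|X_{i+1}^1 - X_{i+1}^2|^2] = |\Delta X_i|^2 + 2\langle \Delta X_i, \delta b\rangle h + |\delta b|^2 h^2 + |\delta\sigma|^2 h.
\]

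The crux is the mixed term $2\langle\Delta X_i, \delta b\rangle h$, where one cannot apply Young's inequality to the whole of $\delta b$ without discarding the (possibly negative) monotonicity gain carried by $k_b$. I therefore telescope $\delta b$ through intermediate arguments, isolating a pure $x$-variation $\delta^x = b(t_i, X_i^1, \varphi^1(X_i^1), \xi^1(X_i^1)) - b(t_i, X_i^2, \varphi^1(X_i^1), \xi^1(X_i^1))$ together with a $y$-variation $\delta^y$ and a $z$-variation $\delta^z$. Assumption~\ref{assume:lip}(1) gives $2\langle\Delta X_i, \delta^x\rangle h \le 2k_b h |\Delta X_i|^2$, while Young's inequality with weight one, applied \emph{separately} to the $y$- and $z$-parts, yields $2\langle\Delta X_i, \delta^y + \delta^z\rangle h \le 2h|\Delta X_i|^2 + h|\delta^y|^2 + h|\delta^z|^2$; these two $h|\Delta X_i|^2$ contributions are exactly what upgrades $A_1$ to $A_1+1$ in the leading coefficient. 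The single-variable Lipschitz bounds of Assumption~\ref{assume:lip}(2) then control $|\delta^y|^2 \le b_y|\varphi^1(X_i^1) - \varphi^2(X_i^2)|^2$, $|\delta^z|^2 \le b_z|\xi^1(X_i^1) - \xi^2(X_i^2)|^2$, together with $|\delta b|^2 \le K|\Delta X_i|^2 + b_y|\varphi^1(X_i^1) - \varphi^2(X_i^2)|^2 + b_z|\xi^1(X_i^1) - \xi^2(X_i^2)|^2$ and $|\delta\sigma|^2 \le \sigma_x|\Delta X_i|^2 + \sigma_y|\varphi^1(X_i^1) - \varphi^2(X_i^2)|^2$.

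It remains to convert the argument differences $\varphi^1(X_i^1) - \varphi^2(X_i^2)$ and $\xi^1(X_i^1) - \xi^2(X_i^2)$ into the form of the statement. For each I insert the intermediate value $\varphi^1(X_i^2)$ (resp.\ $\xi^1(X_i^2)$) and split via Young's inequality with parameter $\lambda_1$ (resp.\ $\lambda_0$), e.g.
\[
|\varphi^1(X_i^1) - \varphi^2(X_i^2)|^2 \le (1+\lambda_1)L(\varphi^1)|\Delta X_i|^2 + (1+\lambda_1^{-1})|\varphi^1(X_i^2) - \varphi^2(X_i^2)|^2,
\]
using that $L(\varphi^1)$ is the squared Lipschitz constant. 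Substituting back and collecting the coefficients of $|\Delta X_i|^2$, of $|\varphi^1(X_i^2) - \varphi^2(X_i^2)|^2$, and of $|\xi^1(X_i^2) - \xi^2(X_i^2)|^2$, and bounding the $O(h^2)$ remainder $b_y h^2$ by $K h^2$ (using the common Lipschitz constant $K\ge b_y$, which is precisely the $Kh$ term inside $A_2$), reproduces the three displayed groups with $A_1 = 2k_b+\sigma_x+1+Kh$ and $A_2 = b_y+\sigma_y+Kh$; the $\xi$-group assembles directly into $(b_z+b_z h)h$ without any such bound. For the degenerate case $\varphi^1=\varphi^2$, $\xi^1=\xi^2$ the argument differences at $X_i^2$ vanish identically, so no Young split is required: formally setting $\lambda_0=\lambda_1=0$ deletes the vanishing second terms and replaces each $(1+\lambda)$ by $1$, giving the second inequality.

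The main obstacle is the bookkeeping of the mixed term: one must peel off only the $x$-variation for the monotonicity estimate, and the choice to handle the $y$- and $z$-variations by two independent weight-one Young steps is what pins down the precise $(A_1+1)h$ coefficient. The remaining care is to keep the weights $\lambda_0,\lambda_1$ consistent between the mixed term and the $|\delta b|^2 h^2$ term so that the $(1+\lambda)$ and $(1+\lambda^{-1})$ factors aggregate cleanly, and to check that the crude $O(h^2)$ bounds do not spoil the identification with $A_2$.
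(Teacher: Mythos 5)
Your argument is correct and is exactly the standard computation that the paper omits (it notes only that Lemma~\ref{lem:estimate_two_X} extends the corresponding lemma of Bender--Zhang, whose proof follows the same route: expand the conditional second moment, kill the martingale cross term, telescope $\delta b$ so that the monotonicity constant $k_b$ acts only on the pure $x$-variation, absorb the $y$- and $z$-variations by two weight-one Young steps, and then split the argument differences at the intermediate point $X_i^2$ with parameters $\lambda_1,\lambda_0$). The only point worth flagging is your bound $b_y h^2 \le K h^2$ needed to identify the $\varphi$-coefficient with $A_2 = b_y+\sigma_y+Kh$: the paper never states $K \ge b_y$ explicitly, but this is a harmless normalization (one may enlarge $K$ without affecting any of the $h\to 0$ limits $L_0$, $L_1$, etc.), so your identification of the constants $A_1+1$, $A_2$ and $(b_z+b_zh)$ stands.
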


\begin{lemma}\label{lem:estimate_two_Y} 
We fix index $i$, and for $l=1,2$, let
\begin{equation}
Y_i^l=Y_{i+1}^l+f (t_i, X_i^l, Y_{i+1}^l, \hat{Z}_i^l ) h - \int_{t_i}^{t_{i+1}} Z_t^l \mathrm{d} W_t,
\end{equation}
where
\begin{equation}
\hat{Z}_i^l \coloneqq \frac{1}{h} \E_{t_i} \left[ Y_{i+1}^l \Delta W_{i} \right].
\end{equation}
Then, for any $\lambda_2, \lambda_3>0$, we find
\begin{equation}
\left|\Delta Y_i\right|^2 + \left(1-A_3\right) h \left|\Delta \hat{Z}_i\right|^2 \leq\left(1+A_4 h\right) \E_{t_i} \left[ \left|\Delta Y_{i+1}\right|^2\right] + A_5 h\left|\Delta X_i\right|^2,
\end{equation}
where
$\Delta X_i \coloneqq X^1_i - X^2_i$,
$\Delta Y_i \coloneqq Y^1_i - Y^2_i$ and 
$\Delta \hat{Z}_i \coloneqq \hat{Z}^1_i - \hat{Z}^2_i $.
\end{lemma}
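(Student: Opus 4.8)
The plan is to reduce the one-step estimate to an application of the It\^o isometry combined with the monotonicity and Lipschitz structure of $f$, followed by a reconciliation of the projected control $\hat Z_i$ with the martingale integrand. First I would take the difference of the two copies of the representation, writing $\Delta f \coloneqq f(t_i, X_i^1, Y_{i+1}^1, \hat Z_i^1) - f(t_i, X_i^2, Y_{i+1}^2, \hat Z_i^2)$ and $\Delta Z_t \coloneqq Z_t^1 - Z_t^2$, so that
\[
\Delta Y_i + \int_{t_i}^{t_{i+1}} \Delta Z_t \, \mathrm{d}W_t = \Delta Y_{i+1} + \Delta f\, h.
\]
Squaring, taking $\E_{t_i}[\cdot]$, and using that $\Delta Y_i$ is $\mathcal{F}_{t_i}$-measurable while the stochastic integral has vanishing conditional mean and satisfies the It\^o isometry, the cross term drops out and I obtain the energy identity
\[
|\Delta Y_i|^2 + \E_{t_i}\Big[\int_{t_i}^{t_{i+1}} |\Delta Z_t|^2 \, \mathrm{d}t\Big] = \E_{t_i}\big[|\Delta Y_{i+1} + \Delta f\, h|^2\big].
\]
This identity is the backbone of the estimate: the left-hand quadratic-variation term will be converted into the $h|\Delta\hat Z_i|^2$ contribution, and the right-hand side will be expanded to expose the constants $A_4$ and $A_5$.

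Next I would expand the right-hand side as $\E_{t_i}[|\Delta Y_{i+1}|^2] + 2h\,\E_{t_i}[\Delta Y_{i+1}\,\Delta f] + h^2\,\E_{t_i}[|\Delta f|^2]$ and decompose $\Delta f = \Delta_x f + \Delta_y f + \Delta_z f$ into its increments in $x$, $y$ and $z$ through intermediate evaluations. The cross term splits accordingly: for the $y$-increment I apply the monotonicity condition of Assumption~\ref{assume:lip}, yielding $2h\,\E_{t_i}[\Delta Y_{i+1}\,\Delta_y f] \le 2k_f h\,\E_{t_i}[|\Delta Y_{i+1}|^2]$, which is the source of the $2k_f$ in $A_4$; for the $x$-increment I use Young's inequality with unit weight together with $|\Delta_x f|^2 \le f_x|\Delta X_i|^2$, producing the $+1$ in $A_4$ and the $f_x$ in $A_5$; and for the $z$-increment I use Young's inequality with weight $\lambda_3$ together with $|\Delta_z f|^2 \le f_z|\Delta\hat Z_i|^2$, producing the $\lambda_3\, h|\Delta\hat Z_i|^2$ that will be absorbed on the left and the $\lambda_3^{-1} f_z$ in $A_4$. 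The remaining quadratic term $h^2\,\E_{t_i}[|\Delta f|^2]$ is controlled by the Lipschitz bound and contributes the higher-order $O(h^2)$ pieces.

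The decisive step is to relate $\hat Z_i$ to $\int|\Delta Z_t|^2 \,\mathrm{d}t$. Multiplying the difference identity by $\Delta W_i$, taking $\E_{t_i}[\cdot]$ and invoking the It\^o isometry gives $\E_{t_i}[\int_{t_i}^{t_{i+1}} \Delta Z_t \,\mathrm{d}t] = h\,\Delta\hat Z_i + h\,\E_{t_i}[\Delta f\,\Delta W_i]$. A Cauchy--Schwarz estimate then bounds $\E_{t_i}[\int|\Delta Z_t|^2\,\mathrm{d}t]$ from below, and a reverse Young inequality with weight $\lambda_2$ separates the genuine $\hat Z$ contribution from the correction $\E_{t_i}[\Delta f\,\Delta W_i]$; since $|\E_{t_i}[\Delta f\,\Delta W_i]|^2 \le h\,\E_{t_i}[|\Delta f|^2]$, this correction merges with the quadratic term of the previous paragraph, and after applying the Lipschitz bound the two together deliver the $(1+\lambda_2^{-1})Kh$ terms appearing uniformly in $A_3$, $A_4$ and $A_5$. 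Substituting this lower bound into the energy identity and collecting every $|\Delta\hat Z_i|^2$ term onto the left produces precisely the coefficient $(1-A_3)$, while the $\E_{t_i}[|\Delta Y_{i+1}|^2]$ and $|\Delta X_i|^2$ terms assemble into $(1+A_4 h)$ and $A_5 h$ respectively, as in \eqref{eq:constants_A}.

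I expect the main obstacle to be exactly this reconciliation: the discrete control $\hat Z_i = h^{-1}\E_{t_i}[Y_{i+1}\,\Delta W_i]$ is built from $Y_{i+1}$ alone, whereas the integrand $Z_t$ in the energy identity represents $Y_{i+1} + f\,h$, so the two differ by the projection of the driver increment and cannot be identified directly. Carefully controlling this mismatch, ensuring that the $z$-induced $|\Delta\hat Z_i|^2$ term is absorbable into the left-hand side, and tracking the Young parameters $\lambda_2$ and $\lambda_3$ so that the resulting constants coincide with the definitions of $A_3$, $A_4$ and $A_5$, is the delicate part; the remaining Lipschitz and Cauchy--Schwarz estimates are routine.
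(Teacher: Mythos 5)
Your proposal is correct and reconstructs precisely the standard argument of Bender and Zhang (2008) that the paper invokes when it omits this proof: the conditional energy identity, the three-way splitting of the driver increment (monotonicity in $y$ giving $2k_f$, Young with unit weight in $x$ giving the $+1$ and $f_x$, Young with weight $\lambda_3$ in $z$ giving $\lambda_3$ and $\lambda_3^{-1}f_z$), and the reconciliation of $\hat Z_i$ with the martingale integrand via $\E_{t_i}\bigl[\int_{t_i}^{t_{i+1}}\Delta Z_t\,\mathrm{d}t\bigr]=h\,\Delta\hat Z_i+h\,\E_{t_i}[\Delta f\,\Delta W_i]$, Cauchy--Schwarz, and a reverse Young inequality with weight $\lambda_2$. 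The bookkeeping you describe does assemble into the constants $A_3$, $A_4$, $A_5$ of \eqref{eq:constants_A}, so no gap remains.
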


We now have the following theorem.
\begin{theorem}\label{estimate_lipschitz}
For any Lipschitz continuous $\varphi$ and $\xi$, we have
\begin{equation}
L\left(F_y^\pi(\varphi)\right) \leq \left(g_x+A_5 T\right) \left(e^{\tilde{A} T} \vee 1\right),
\end{equation}
where $\lambda_0 = \lambda_1=0$ and $\lambda_2, \lambda_3>0$ are chosen such that
\begin{equation}\label{condition_A3}
A_3 \leq 1.
\end{equation}
Here, $\tilde{A}$ is defined as 
\begin{equation}
\tilde{A} \coloneqq (A_1+1) +A_4+ (A_1+1) A_4 h + (A_2+A_2 A_4 h) L(\varphi) + ( (b_zh+b_z) + (b_zh+b_z) A_4 h ) L(\xi) .
\end{equation}
\end{theorem}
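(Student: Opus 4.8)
The plan is to bound the squared Lipschitz constant $L(\Phi_i)$ of each component $\Phi_i = u_i^{\pi,m+1}$ of $F_y^\pi(\varphi)$ by a backward induction in $i$, starting from the terminal datum at $i=N$. Since $\Phi_i(X_i^1)-\Phi_i(X_i^2)=\Delta Y_i$ whenever $Y_i^l=\Phi_i(X_i^l)$, it suffices to propagate the pointwise estimate $|\Delta Y_i|^2\le L(\Phi_i)\,|\Delta X_i|^2$ through a single time step, where $X^1,X^2$ are two copies of the forward scheme driven by the \emph{same} input fields $(\varphi,\xi)$ but started at two arbitrary deterministic points. Because the two copies share the same decoupling fields, we are precisely in the degenerate case $\varphi^1=\varphi^2$, $\xi^1=\xi^2$ of Lemma~\ref{lem:estimate_two_X}, which forces the choice $\lambda_0=\lambda_1=0$ announced in the statement.

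First I would invoke Lemma~\ref{lem:estimate_two_Y}. Choosing $\lambda_2,\lambda_3>0$ so that $A_3\le 1$ (feasible for $h$ small once $\lambda_2+\lambda_3<1$), the term $(1-A_3)h\,|\Delta\hat Z_i|^2$ is nonnegative and may be discarded, leaving $|\Delta Y_i|^2 \le (1+A_4 h)\,\E_{t_i}[|\Delta Y_{i+1}|^2] + A_5 h\,|\Delta X_i|^2$. Next, assuming inductively that $\Phi_{i+1}$ is Lipschitz, the pointwise bound $|\Delta Y_{i+1}|^2=|\Phi_{i+1}(X_{i+1}^1)-\Phi_{i+1}(X_{i+1}^2)|^2\le L(\Phi_{i+1})\,|X_{i+1}^1-X_{i+1}^2|^2$ combined with the degenerate case of Lemma~\ref{lem:estimate_two_X} yields $\E_{t_i}[|\Delta Y_{i+1}|^2]\le L(\Phi_{i+1})\bigl[1+(A_1+1)h+A_2 h\,L(\varphi)+(b_z+b_z h)h\,L(\xi)\bigr]|\Delta X_i|^2$.

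Combining the two displays and multiplying out the product $(1+A_4 h)\bigl[1+(A_1+1)h+A_2 h\,L(\varphi)+(b_z+b_z h)h\,L(\xi)\bigr]$, I would verify that it collapses exactly to $1+\tilde A h$, with $\tilde A$ as in the statement; this is the one genuinely delicate bookkeeping step, since every cross term must land in the correct coefficient of $L(\varphi)$, of $L(\xi)$, or of the constant part. Taking the supremum of the Lipschitz ratio over all pairs of starting points then produces the one-step recursion $L(\Phi_i)\le (1+\tilde A h)\,L(\Phi_{i+1})+A_5 h$, with terminal value $L(\Phi_N)=L(g)=g_x$ by Assumption~\ref{assume:lip}.

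Finally I would solve this recursion by a discrete Gronwall argument: unrolling gives $L(\Phi_i)\le (1+\tilde A h)^{N-i}g_x+A_5 h\sum_{j=0}^{N-i-1}(1+\tilde A h)^j$. Bounding each factor $(1+\tilde A h)^{k}$ with $0\le k\le N$ by $e^{\tilde A T}\vee 1$ — the maximum absorbing both signs, via $1+x\le e^x$ when $\tilde A\ge 0$ and $0<1+\tilde A h\le 1$ when $\tilde A<0$ — and estimating the geometric sum by its at most $N$ terms, the right-hand side is at most $(g_x+A_5 hN)(e^{\tilde A T}\vee 1)=(g_x+A_5 T)(e^{\tilde A T}\vee 1)$. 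As this bound is uniform in $i$, taking $\sup_{0\le i\le N}$ gives $L(F_y^\pi(\varphi))\le (g_x+A_5 T)(e^{\tilde A T}\vee 1)$. The main obstacle is the exact algebraic identification of $\tilde A$ in the third step; the remainder is a routine assembly of the two lemmas and a standard discrete Gronwall estimate.
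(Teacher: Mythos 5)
Your proposal is correct and follows essentially the same route as the paper's proof: combine the degenerate ($\varphi^1=\varphi^2$, $\xi^1=\xi^2$, $\lambda_0=\lambda_1=0$) case of Lemma~\ref{lem:estimate_two_X} with Lemma~\ref{lem:estimate_two_Y} under $A_3\le 1$, identify the product of the two one-step factors with $1+\tilde A h$, and close with the recursion $L(\Phi_i)\le(1+\tilde A h)L(\Phi_{i+1})+A_5h$, $L(\Phi_N)=g_x$, via discrete Gronwall. The only cosmetic difference is that the paper replaces $\tilde A$ by $\tilde A^+=\tilde A\vee 0$ before unrolling, whereas you bound each factor $(1+\tilde A h)^k$ by $e^{\tilde A T}\vee 1$ directly; the two give the identical final constant.
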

\begin{proof}

We adopt the notations suggested at the end of Section \ref{sec31}. We fix $i$ and $x_1, x_2$, and denote by
\begin{equation}
\begin{aligned}
& \Delta x \coloneqq x_1-x_2, \quad \Delta X \coloneqq X^{\varphi,\xi, i, x_1}-X^{\varphi, \xi, i, x_2}, \quad \Delta Y \coloneqq Y^{\varphi, \xi, i, x_1}-Y^{\varphi, \xi, i, x_2}, \\
& \Delta \Phi_i \coloneqq \Phi_i\left(x_1\right)-\Phi_i\left(x_2\right), \quad \Delta \psi_i \coloneqq \psi_i\left(x_1\right)-\psi_i\left(x_2\right).
\end{aligned}
\end{equation}
We apply Lemmas \ref{lem:estimate_two_X} and \ref{lem:estimate_two_Y}, setting $\lambda_0 = \lambda_1=0$, and obtain
\begin{equation}
\begin{aligned}
\E\left[ |\Delta X_{i+1} |^2 \right] 
& \leq \left[1+ (A_1+1) h+A_2 h L(\varphi) + (b_zh + b_z) h L(\xi) \right] |\Delta x|^2,  \\
\left|\Delta \Phi_i\right|^2 + \left(1-A_3\right) h\left|\Delta \psi_i\right|^2 
& \leq \left(1+A_4 h\right) \E\left[ |\Delta Y_{i+1}|^2 \right] + A_5 h|\Delta x|^2.
\end{aligned}
\end{equation}
Since we require $A_3\leq 1$, using Lemma \ref{lem:estimate_two_X} again we have
\begin{equation}
\begin{aligned}
\left|\Delta \Phi_i\right|^2 
& \leq (1+A_4 h ) L\left(\Phi_{i+1}\right) \E\left[ |\Delta X_{i+1}|^2\right] + A_5 h|\Delta x|^2 \\
& \leq (1+A_4 h) \left( 1+ (A_1+1) h+A_2 h L(\varphi) + (b_zh + b_z) h L(\xi) \right) L\left(\Phi_{i+1}\right) |\Delta x|^2 + A_5 h |\Delta x|^2.
\end{aligned}
\end{equation}
Thus, by the definition of $L(\cdot)$ we immediately get
\begin{equation}
\begin{aligned}
L\left(\Phi_i\right) 
& \leq (1+A_4 h )  (1+ (A_1+1) h+A_2 h L(\varphi) + (b_zh + b_z) h L(\xi) ) L\left(\Phi_{i+1}\right)  + A_5 h   \\
& \leq ( 1+\tilde{A}^{+} h )  L\left(\Phi_{i+1}\right)+A_5 h,
\end{aligned}
\end{equation}
where $\tilde{A}^{+} \coloneqq \tilde{A} \vee 0$, and
\begin{equation}
\tilde{A} \coloneqq (A_1+1) +A_4+ (A_1+1) A_4 h +  (A_2+A_2 A_4 h) L(\varphi) + \left( (b_zh+b_z) + (b_zh+b_z) A_4 h\right) L(\xi) .
\end{equation}
Note that $L\left(\Phi_N\right)=g_x$. Hence, we can apply the discrete Gronwall inequality to get
\begin{equation}
L(\Phi) \leq e^{\tilde{A}^{+} T} (g_x+A_5 T) = (g_x+A_5 T) \left(e^{\tilde{A} T} \vee 1\right).
\end{equation}
\end{proof}

From above derivation and the definition of $\tilde{A}$, it is clear that the upper bound for the quantity $L\left(F_y^\pi(\varphi)\right) $ depends on both $L(\varphi)$ and $L(\xi)$, due to the extra $Z$-coupling in the forward SDE. This brings the challenge of developing uniform bounds for $L(u_i^{\pi, m})$ and $L(v_i^{\pi, m})$ over the iterations, as we discussed in Section \ref{sec2}. 

To deal with this, we enforce Assumptions \ref{assume:bounded-sigma} and \ref{assume:pde} to hold in the rest of this section, so that we can use the results discussed in Remark \ref{remark:lipconstants}.

\begin{theorem}\label{estimate_uniform_lipschitz} 
Consider the constants $L_0$ and $L_1$ given by \eqref{def:L0L1} . If
\begin{equation}\label{condition:L0}
L_0 < e^{-1},
\end{equation}
then, for any constants $\bar{L}>L_1$ and sufficiently small $h$, we have 
\begin{equation}
L\left(u^{\pi, m}\right) \leq \bar{L} 
,\qquad
L\left(v^{\pi, m}\right) \leq (2\sigma_x+2\sigma_y+2\Sigma) \bar{L}, \quad \forall m.
\end{equation}
 
Notice that condition \eqref{condition:L0} holds if one of the generalized weak and monotonicity conditions holds.
 
\end{theorem}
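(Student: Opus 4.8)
The plan is to use the differentiation set-up to collapse the two intertwined Lipschitz constants into a single scalar recursion and then run a fixed-point / invariant-interval argument. The decisive first step is Remark \ref{remark:lipconstants}: under Assumptions \ref{assume:bounded-sigma} and \ref{assume:pde}, taking $(\varphi,\xi)=(u^{\pi,m},v^{\pi,m})$ yields $L(v^{\pi,m})=c\,L(u^{\pi,m})$ with $c\coloneqq 2\sigma_x+2\sigma_y+2\Sigma$. Substituting $L(\xi)=c\,L(\varphi)$ into the quantity $\tilde A$ of Theorem \ref{estimate_lipschitz} turns the bound there into one depending on $L(\varphi)$ alone, so that, writing $\ell_m\coloneqq L(u^{\pi,m})$, we obtain a scalar recursion $\ell_{m+1}\le\Psi(\ell_m)$ with $\Psi(\ell)=(g_x+A_5T)\bigl(e^{(\alpha+\beta\ell)T}\vee 1\bigr)$, where $\tilde A(\ell)=\alpha+\beta\ell$ is increasing and affine in $\ell$ and the slope $\beta$ now absorbs the $c\,b_z$ contribution created by the differentiation relation.

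Next I would analyse this scalar map, which is increasing and convex, so that everything reduces to locating its fixed points. The thresholds are encoded by $L_0$ and $L_1$, which arise as the idealized $h\to 0$ limits of the $h$-dependent constants: letting $h\to 0$ and choosing the free parameters with $\lambda_3\uparrow 1$, $\lambda_2\downarrow 0$ (keeping $\lambda_2^{-1}h\to 0$ so that the constraint $A_3\le 1$ from Theorem \ref{estimate_lipschitz} is preserved), the coefficients converge to $g_x+A_5T\to C\coloneqq g_x+f_xT$, $\beta\to\beta_0\coloneqq b_y+\sigma_y+c\,b_z$, and $\alpha\to\alpha_0\coloneqq 2k_b+2k_f+3+\sigma_x+f_z$, matching exactly the exponents in \eqref{def:L0L1}.

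The heart of the argument is the transcendental fixed-point condition. The key algebraic identity to verify is $\beta_0 T\,L_1=e\,L_0$; evaluating the limiting map at $\bar L=L_1$ then gives, on the exponential branch, $\Psi(L_1)/L_1=e^{\beta_0 TL_1-\beta_0 CT-1}=e^{eL_0-\beta_0 CT-1}$, which, since $\beta_0 CT\ge 0$, is $<1$ precisely when $L_0<e^{-1}$. This is the scalar shadow of the elementary fact that $\ell=K_1e^{K_2\ell}$ is solvable iff $K_1K_2e\le 1$, because $\max_{\ell\ge 0}\ell e^{-K_2\ell}=1/(K_2e)$. The strict inequality $\Psi(L_1)<L_1$, together with $\Psi$ convex increasing and $\Psi(0)>0$, produces two fixed points $\ell_-<\ell_+$ with $\ell_-<L_1<\ell_+$, and the orbit of any $\ell_0\in[0,\ell_-)$ increases monotonically to $\ell_-$ without ever exceeding it.

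To finish, I would transfer this to the genuine finite-$h$ map $\Psi_h$ (whose constants $A_i=A_i(h,\lambda_2,\lambda_3)$ converge to the idealized ones) and to a controlled initialization $\ell_0=L(u^{\pi,0})$, e.g. $\ell_0=0$ for a constant $u^{\pi,0}$: the inequality $\ell_{m+1}\le\Psi_h(\ell_m)$ together with monotonicity gives $\ell_m\le\Psi_h^{(m)}(\ell_0)\le\ell_-^h$ for all $m$, and since $\ell_-^h\to\ell_-<L_1$ as $h\to 0$, for any prescribed $\bar L>L_1$ we obtain $\ell_m\le\ell_-^h<\bar L$ once $h$ is small, whence $L(v^{\pi,m})=c\,\ell_m\le c\bar L$. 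I would then check the closing remark: each case of Assumption \ref{assume:weakandmono} drives $L_0<e^{-1}$ — small $T$ or weak $X$-into-backward coupling sends the prefactor $\beta_0 CT\to 0$; weak $Y$-into-forward coupling sends $\beta_0\to 0$; and a strongly decreasing $b$ or $f$ (very negative $k_b$ or $k_f$, with $b_z$ small to keep $\beta_0$ bounded) sends $\alpha_0 T\to-\infty$, so $L_0\to 0$ in every case. I expect the uniform-in-$h$ bookkeeping — reconciling $A_i(h,\lambda_2,\lambda_3)$ with the clean limits $L_0,L_1$ and pinning down the admissible double limit in $(\lambda_2,\lambda_3)$ — to be the main obstacle, the convexity analysis and the induction being routine by comparison.
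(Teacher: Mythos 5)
Your proposal is correct and follows essentially the same route as the paper: both collapse the two Lipschitz constants into one scalar recursion via the differentiation relation $L(\xi)=(2\sigma_x+2\sigma_y+2\Sigma)L(\varphi)$, then control the iterates of the resulting map under $L_0<e^{-1}$ with the choices $\lambda_2=\sqrt h$, $\lambda_3=1-(1+K)\sqrt h-Kh$ and the limit $h\downarrow 0$. The only difference is presentational — the paper verifies the explicit invariant bound $\tilde L_m\le[A_2+A_2A_4h+K_0][g_x+A_5T]T+1$ by induction, whereas you locate the minimal fixed point of the convex map; your identity $\beta_0TL_1=eL_0$ is exactly what makes the paper's induction close.
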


\begin{proof}
First, by induction, one can easily show that $L_m \coloneqq$ $L\left(u^{\pi, m}\right)<\infty$, for any choice of total number of time steps $N$ and iteration steps $m$. Recall that due to our additional assumptions, we have
\begin{equation}
L(\xi) = (2\sigma_x+2\sigma_y+2\Sigma) L(\varphi).
\end{equation}
Then, we can rewrite $\tilde{A}$, as follows,
\begin{equation}
\begin{aligned}
\tilde{A} 
& = (A_1+1) + A_4+ (A_1+1) A_4 h + \left[A_2+A_2 A_4 h + K_0 \right] L(\varphi),
\end{aligned}
\end{equation}
where $K_0\coloneqq \left[(b_zh+b_z) + (b_zh+b_z) A_4 h\right] (2\sigma_x+2\sigma_y+2\Sigma) $.

Due to Theorem \ref{estimate_lipschitz}, we now have
\begin{equation}
\begin{aligned}
L_m 
& \leq \left[g_x+A_5 T\right] \left[ \exp (\tilde{A}T) \vee 1\right] \\
& \leq \left[g_x+A_5 T\right] \left[ \exp ( (A_1+1)T + A_4 T + (A_1+1) A_4 h T + \left[A_2+A_2 A_4 h + K_0 \right]T L_{m-1} ) \vee 1\right],
\end{aligned}
\end{equation}
for $\lambda_0=\lambda_1=0$ and any $\lambda_2, \lambda_3>0$ satisfying $A_3\leq 1$.

Introducing
\begin{equation}
\tilde{L}_m \coloneqq \left[A_2+A_2 A_4 h + K_0 \right] T L_m, 
\end{equation}
we get
\begin{equation}\label{bound_iterate_tilde_L}
\begin{aligned}
\tilde{L}_m 
& \leq\left[A_2+A_2 A_4 h + K_0 \right]\left[g_x+A_5 T\right] T\left[e^{\left[A_1+1+A_4+(A_1+1) A_4 h\right] T} e^{\tilde{L}_{m-1}} \vee 1\right] \\
& \leq\left[A_2+A_2 A_4 h + K_0 \right]\left[g_x+A_5 T\right] T\left[e^{\left[A_1+1+A_4+(A_1+1) A_4 h\right] T} e^{\tilde{L}_{m-1}}+1\right].
\end{aligned}
\end{equation}
We apply induction to show the following:
\begin{equation}\label{eq:tilde_Lm}
\tilde{L}_m \leq \left[A_2+A_2 A_4 h + K_0 \right]\left[g_x+A_5 T\right] T+1 \quad \forall m.
\end{equation}
Obviously, it holds true for $m=0$, since $\tilde{L}_0=0$. Then, assuming it holds true for $m$, we derive
\begin{equation}
\begin{aligned}
\tilde{L}_{m+1} 
& \leq\left[A_2+A_2 A_4 h + K_0 \right]\left[g_x+A_5 T\right] T\left[e^{\left[A_1+1+A_4+(A_1+1) A_4 h\right] T} e^{\tilde{L}_{m }}+1\right] \\
& \leq\left[A_2+A_2 A_4 h + K_0 \right]\left[g_x+A_5 T\right] T\left[e^{\left[A_1+1+A_4+(A_1+1) A_4 h\right] T} e^{\left[A_2+A_2 A_4 h + K_0 \right]\left[g_x+A_5 T\right] T+1 }+1\right] \\
& = 1 + \left[A_2+A_2 A_4 h + K_0 \right]\left[g_x+A_5 T\right] T, 
\end{aligned}
\end{equation}
provided the following holds
\begin{equation}
L_0(\lambda, h) \coloneqq\left[A_2+A_2 A_4 h + K_0 \right]\left[g_x+A_5 T\right] T e^{\left[A_2+A_2 A_4 h + K_0 \right]\left[g_x+A_5 T\right] T+\left[A_1+1+A_4+(A_1+1) A_4 h\right] T} \leq e^{-1},
\end{equation}
which concludes the induction proof.

Combining \eqref{bound_iterate_tilde_L} with \eqref{eq:tilde_Lm} yields
\begin{equation}
\begin{aligned}
\tilde{L}_m 
& \leq\left[A_2+A_2 A_4 h + K_0 \right]\left[g_x+A_5 T\right] T\left[e^{\left[A_1+1+A_4+(A_1+1) A_4 h\right] T + \left[A_2+A_2 A_4 h + K_0 \right]\left[g_x+A_5 T\right] T+1 } +1\right],
\end{aligned}
\end{equation}
and, consequently, we obtain an upper bound for $L_m$ as well,
\begin{equation}
\begin{aligned}
L_m 
& \leq  \left[g_x+A_5 T\right]  \left[e^{\left[A_1+1+A_4+(A_1+1) A_4 h\right] T + \left[A_2+A_2 A_4 h + K_0 \right]\left[g_x+A_5 T\right] T+1 } +1\right] \\
& \coloneqq L_1(\lambda, h).
\end{aligned}
\end{equation}

To justify the above derivations, we shall choose $\lambda_2$ and $\lambda_3$ such that $A_3\leq 1$ and $L_0(\lambda, h) \leq e^{-1}$. For sufficiently small $h$, we choose
\begin{equation}\label{choice_lambda2_lambda3}
\lambda_2(h) \coloneqq \sqrt{h}, \quad \lambda_3(h) \coloneqq 1-[1+K] \sqrt{h}-K h.
\end{equation}
Then, we can check that $A_3=1$ and the following limit results hold true,
\begin{equation}
\lim_{h \downarrow 0} L_0(\lambda(h), h)=L_0, \quad \lim _{h \downarrow 0} L_1(\lambda(h), h)=L_1.
\end{equation}
Since we require $L_0<e^{-1}$ and $L_1 < \bar{L}$, with the limit results, we have $L_0(\lambda(h), h) \leq e^{-1}$ and $L_1(\lambda(h), h) \leq \bar{L}$, for sufficiently small $h$, and therefore $L_m$ is bounded by $\bar{L}$.
\end{proof}

\subsection{Linear growth}\label{sec33}
 
Analogously to Subsection \ref{sec32}, we aim to develop uniform bounds of the linear growth coefficients of $u_i^{\pi, m}$ and $v_i^{\pi, m}$, respectively. To do this, we introduce the following notation. Let $\varphi$ be a given function of linear growth, we can then write  
\begin{equation}
|\varphi(x)|^2 \leq G(\varphi) |x|^2 + H(\varphi),
\end{equation}
for some constants $G(\varphi)$ and $H(\varphi)$. Similarly, we shall denote $G(\varphi) \coloneqq \sup_i G(\varphi_i) $ and $H(\varphi) \coloneqq \sup_i H(\varphi_i) $ if we consider the supremum of the coefficients of a sequence of functions of linear growth $\{\varphi_i \}_{0\leq i\leq N}$. 

Similar to Remark \ref{remark:lipconstants}, we have the following relation for the linear growth coefficients.
\begin{remark}\label{remark:growthcoeff}
Let $\varphi$ and $\xi$ be functions of linear growth, e.g. Lipschitz functions, and assume it holds that $\xi(x) = \partial_x \varphi(x) \sigma(t, x, \varphi(x))$,  for any $t\geq 0$ and $x\in \mathbb{R}$. 

Again, with Assumptions \ref{assume:lip} and \ref{assume:bounded-sigma} and assuming that $\varphi \in C_b^2$, we can derive
\begin{equation}\label{ineq:growthcoeff}
\begin{aligned}
| \xi(x)|^2
& \leq L( \varphi) (\sigma_x + \sigma_y G(\varphi) ) |x|^2 + L(\varphi)( \sigma_0 + \sigma_y H(\varphi) ),
\end{aligned}
\end{equation}
and therefore we can write $G(\xi) =L( \varphi) (\sigma_x + \sigma_y G(\varphi))$ and $H(\xi) =L(\varphi)( \sigma_0 + \sigma_y H(\varphi) )$.

Applying the same arguments as in Remark \ref{remark:lipconstants}, we have relation \eqref{ineq:growthcoeff} for the solution given by \eqref{eq: decouple} and the approximated solution given by \eqref{full_scheme}, respectively.
\end{remark}

We now define the following functions that will be used throughout the proofs of this subsection. For any $x, y \in \mathbb{R}$ and $G>0$, let
\begin{equation}
\begin{aligned}
\Gamma_0(x) & \coloneqq \frac{e^x-1}{x},\;\;\;\;
\Gamma_1(x, y)  \coloneqq \sup _{0<\theta<1} \theta e^{\theta x} \Gamma_0(\theta y), \\[1.0ex]
c_0(G) & \coloneqq T \left( g_x \Gamma_1\left( \bar{A}_4 T,  (\bar{A}_1+\bar{D}_1) T + (\bar{A}_2+\bar{D}_2) GT \right) 
+ \bar{A}_5 T \Gamma_0\left( \bar{A}_4 T\right)  \Gamma_0\left( (\bar{A}_1 + \bar{D}_1) T + (\bar{A}_2 + \bar{D}_2) GT \right) \right), \\
c_1(G) & \coloneqq \left( \bar{A}_2 + \bar{D}_2 \right)  c_0(G), \\
L_2(G) & \coloneqq e^{ \bar{A}_4 ^{+} T} g_0 + \bar{B}_2 T \Gamma_0\left(\bar{A}_4 T\right) + \left(\bar{B}_1 + \bar{D}_3 \right) c_0(G),
\end{aligned}
\end{equation}
and the corresponding discretized versions of these functions are given by,
\begin{equation}
\begin{aligned}
& \Gamma_0^i(x)  \coloneqq \frac{(1+x h)^i-1}{x}, \;\;\;\;
 \Gamma_1^N(x, y)  \coloneqq \sup _{0 \leq i \leq N} (1+x h)^i \Gamma_0^i(y), \\[1.0ex]
& c_0(\lambda, h, G) \coloneqq g_x \Gamma_1^N (A_4, (A_1+D_1)+(A_2+D_2) G) + A_5 \Gamma_0^N (A_4)  \Gamma_0^N (A_1+D_1 + (A_2+D_2) G), \\
& c_1(\lambda, h, G) \coloneqq (A_2+D_2) c_0(\lambda, h, G), \\
& L_2(\lambda, h, G) \coloneqq (B_1+D_3) c_0(\lambda, h, G)+ (e^{A_4 T} \vee 1) g_0 + B_2 \Gamma_0^N (A_4),
\end{aligned}
\end{equation}
where $\bar{A}_j \coloneqq \lim_{h\to 0} A_j$, $\bar{B}_j \coloneqq \lim_{h\to 0} B_j$, $\bar{D}_j \coloneqq \lim_{h\to 0} D_j$, and 
\begin{equation}
\begin{aligned}
& B_1 \coloneqq b_0+\sigma_0+K h,  \qquad 
B_2 \coloneqq f_0+K f_0 h, \\
& D_1 \coloneqq (b_zh + b_z) \bar{L} \sigma_x , \;\; \;\;\;\;
D_2 \coloneqq (b_zh + b_z) \bar{L} \sigma_y , \qquad 
D_3 \coloneqq (b_zh + b_z) \bar{L} \sigma_0.
\end{aligned}
\end{equation}

\begin{remark} Notice that the functions $c_0(\cdot)$, $c_1(\cdot)$ and $L_2(\cdot)$ as well as their discrete counterparts are generalized compared to the ones defined in \cite{bender2008time}, due to the additional constants $D_j$. Moreover, we can easily check that these constants $D_j$ will be zero whenever $b_z=0$, and consequently our results reduce to the no $Z$-coupling case then.
\end{remark}

Again, we first state some standard estimates and hence the proofs are omitted. In particular, Lemma \ref{lem:moment_x} stated below is a straightforward extension of the corresponding lemma in \cite{bender2008time}, when the additional $Z$-coupling is taken into account.

\begin{lemma}\label{lem:moment_x} 
Assume Theorem \ref{estimate_uniform_lipschitz} holds true, and consider
\begin{equation}
X_{i+1} = X_i+b (t_i, X_i, \varphi (X_i), \xi (X_i)  ) h + \sigma(t_i, X_i, \varphi (X_i)) \Delta W_{i}.
\end{equation}
Then,
\begin{equation}
\begin{aligned}
\E_{t_i} \left[  |X_{i+1} |^2\right] 
& \leq \left[ 1+ (A_1 + D_1) h + (A_2 + D_2) h G(\varphi)  \right] \left|X_i\right|^2 \\
& \qquad + \left[ (B_1 + D_3) + (A_2 + D_2) H(\varphi)  \right] h.  \\
\end{aligned}
\end{equation}
\end{lemma}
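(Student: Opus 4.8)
The plan is to prove the one-step second-moment estimate by the standard route: expand $|X_{i+1}|^2$, take the conditional expectation $\E_{t_i}[\cdot]$, and exploit that $\Delta W_i$ is independent of $\mathcal{F}_{t_i}$ with $\E_{t_i}[\Delta W_i]=0$ and $\E_{t_i}[|\Delta W_i|^2]=h$. Writing $b\coloneqq b(t_i, X_i,\varphi(X_i),\xi(X_i))$ and $\sigma\coloneqq\sigma(t_i,X_i,\varphi(X_i))$, which are both $\mathcal{F}_{t_i}$-measurable, the cross term in $\Delta W_i$ vanishes and one obtains the exact identity
\begin{equation}
\E_{t_i}\left[|X_{i+1}|^2\right]=|X_i|^2 + 2h\,X_i\,b + h^2|b|^2 + h|\sigma|^2 .
\end{equation}
It then remains to bound the last three terms and to reorganize the constants into the stated form.

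For the cross term $2h\,X_i\,b$, I would isolate the $x$-dependence so as to apply the monotonicity condition of Assumption \ref{assume:lip}(1). Splitting $b=\bigl[b(t_i,X_i,\varphi(X_i),\xi(X_i))-b(t_i,0,\varphi(X_i),\xi(X_i))\bigr]+b(t_i,0,\varphi(X_i),\xi(X_i))$, the first bracket contributes at most $2k_b|X_i|^2$ by monotonicity, while the second is treated by Young's inequality, $2X_i\,b(t_i,0,\cdot)\le |X_i|^2+|b(t_i,0,\varphi(X_i),\xi(X_i))|^2$ (this is the origin of the $+1$ in $A_1$). The leftover squared quantities $|b(t_i,0,\cdot)|^2$, $h^2|b|^2$ and $h|\sigma|^2$ are all controlled by the linear-growth bounds of Assumption \ref{assume:lip}(3), after inserting $|\varphi(X_i)|^2\le G(\varphi)|X_i|^2+H(\varphi)$.

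The step specific to the $Z$-coupling is the treatment of the $b_z|\xi(X_i)|^2$ contribution that enters through the growth of $b$. Here I would substitute the growth relation for $\xi$ from Remark \ref{remark:growthcoeff}, namely $G(\xi)=L(\varphi)(\sigma_x+\sigma_y G(\varphi))$ and $H(\xi)=L(\varphi)(\sigma_0+\sigma_y H(\varphi))$, together with the uniform Lipschitz bound $L(\varphi)\le\bar{L}$ furnished by Theorem \ref{estimate_uniform_lipschitz}. This turns $b_z|\xi(X_i)|^2\le b_z\bigl(G(\xi)|X_i|^2+H(\xi)\bigr)$ into terms carrying the factors $b_z\bar{L}\sigma_x$, $b_z\bar{L}\sigma_y$ and $b_z\bar{L}\sigma_0$, which are precisely the constants $D_1,D_2,D_3$. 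In this way the two intertwined growth coefficients collapse to a single one, exactly as in the Lipschitz analysis. Collecting the coefficients of $|X_i|^2$ and of the constant term and comparing against \eqref{eq:constants_A} and the definitions of $B_1$ and $D_j$ then yields the claim.

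The main bookkeeping obstacle is the higher-order term $h^2|b|^2$: its contributions are genuinely $O(h^2)$ and must be absorbed into the $Kh$-corrections built into $A_1$, $A_2$, $B_1$ and into the $b_zh$ parts of $D_1,D_2,D_3$. Matching coefficients shows that the $K|X_i|^2$ piece of the growth bound supplies the $Kh$ inside $A_1$ exactly, and the $b_z G(\xi)$ piece supplies the $b_zh$ parts of $D_1,D_2$; the remaining $b_y G(\varphi)$ and $b_0$ pieces are matched to the $Kh$ terms in $A_2$ and $B_1$ by using that the principal constant $K$ dominates these coupling constants, which may be assumed without loss of generality since it only enlarges $O(h)$ contributions and leaves the weak-coupling/monotonicity conditions of Assumption \ref{assume:weakandmono} untouched. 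No new idea is needed beyond this careful matching; since the statement is a routine extension of the corresponding lemma in \cite{bender2008time}, I expect the only genuinely new work to be the propagation of the $\xi$-growth relation into the $D_j$ constants described above.
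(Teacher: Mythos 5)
Your proof is correct and is precisely the ``straightforward extension'' of the Bender--Zhang moment lemma that the paper invokes without writing out: expand the square, use the monotonicity of $b$ in $x$ plus Young's inequality on the cross term, insert the linear-growth bounds, and convert the $b_z|\xi(X_i)|^2$ contribution into the $D_1,D_2,D_3$ constants via the growth relation of Remark \ref{remark:growthcoeff} together with the uniform bound $L(\varphi)\le\bar{L}$ from Theorem \ref{estimate_uniform_lipschitz}. The only caveat, which you correctly flag, is that absorbing the $O(h^2)$ remainders $h^2 b_0$ and $h^2 b_y|\varphi(X_i)|^2$ into the $Kh$ corrections of $B_1$ and $A_2$ requires reading $K$ as a generic constant dominating $b_0$ and $b_y$ --- the convention inherited from \cite{bender2008time}, harmless since $K$ enters neither Assumption \ref{assume:weakandmono} nor the limits $L_0$, $L_1$.
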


\begin{lemma}\label{lem:moment_yandz}
Assume
\begin{equation}
Y_i = Y_{i+1} + f (t_i, X_i, Y_{i+1}, \hat{Z}_i ) h - \int_{t_i}^{t_{i+1}} Z_t  \mathrm{d} W_t,
\end{equation}
where
\begin{equation}
\hat{Z}_i  =  \frac{1}{h} \E_{t_i}  \left[ Y_{i+1} \Delta W_{i}\right].
\end{equation}
Then, for any $\lambda_2, \lambda_3>0$, we have
\begin{equation}
|Y_i |^2 + (1-A_3) h |\hat{Z}_i|^2 
\leq (1+A_4 h) \E_{t_i} \left[|Y_{i+1}|^2\right] + A_5 h|X_i|^2 + B_2 h .
\end{equation}
\end{lemma}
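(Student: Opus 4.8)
The plan is to mimic the continuous-time a priori estimate for $|Y_t|^2$ (Itô's formula applied to the square), transcribed to a single step of the discrete scheme. First I would exploit the martingale structure: writing $M \coloneqq \int_{t_i}^{t_{i+1}} Z_t \, \mathrm{d}W_t$, the equation reads $Y_{i+1} = Y_i - f(t_i, X_i, Y_{i+1}, \hat{Z}_i)\,h + M$, while taking $\E_{t_i}$ of the defining relation yields $Y_i = \E_{t_i}[Y_{i+1} + f(t_i, X_i, Y_{i+1}, \hat{Z}_i)\,h]$. Since $\E_{t_i}[M] = 0$ and $\E_{t_i}[M\,\Delta W_i] = \E_{t_i}[Y_{i+1}\,\Delta W_i] = h\,\hat{Z}_i$, Cauchy–Schwarz against $\Delta W_i$ gives the crucial lower bound $\E_{t_i}[M^2] \geq h\,|\hat{Z}_i|^2$, which is ultimately what produces the good $(1-A_3)h\,|\hat{Z}_i|^2$ term on the left of the claimed inequality.

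Next I would expand $\E_{t_i}[|Y_{i+1}|^2]$ from $Y_{i+1} = Y_i - fh + M$: taking $\E_{t_i}$ the $Y_iM$ cross term vanishes, and I arrive at an identity of the schematic form $|Y_i|^2 + \E_{t_i}[M^2] = \E_{t_i}[|Y_{i+1}|^2] + 2h\,Y_i\,\E_{t_i}[f] + 2h\,\E_{t_i}[fM] - h^2\,\E_{t_i}[f^2]$. The whole task then reduces to bounding the drift term $2h\,Y_i\,\E_{t_i}[f]$ and the martingale cross term $2h\,\E_{t_i}[fM]$ from above, after which I substitute $\E_{t_i}[M^2] \geq h\,|\hat{Z}_i|^2$ to expose $(1-A_3)h\,|\hat{Z}_i|^2$. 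For the drift term I would split $f(t_i,X_i,Y_{i+1},\hat{Z}_i)$ additively across its arguments: the increment in the $y$-slot is controlled by the monotonicity constant $k_f$ of Assumption \ref{assume:lip}(1), which is the only place the factor $2k_f$ in $A_4$ can originate; the increment in the $z$-slot is estimated by the $z$-Lipschitz constant $f_z$ and Young's inequality with parameter $\lambda_3$, splitting it into a $\lambda_3 h\,|\hat{Z}_i|^2$ contribution (hence the $\lambda_3$ in $A_3$) and a $\lambda_3^{-1}f_z$ contribution to $A_4$; and the remaining $x$-increment together with the value $f(t_i,0,0,0)$ is handled by Young's inequality with parameter one and the linear-growth bounds of Assumption \ref{assume:lip}(3), producing the $+1$ in $A_4$, the leading $f_x$ in $A_5$, and the leading $f_0$ in $B_2$.

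The cross term $2h\,\E_{t_i}[fM]$ is where the parameter $\lambda_2$ enters, and I expect it to be the main obstacle. The subtlety is that monotonicity is a pointwise statement in $y$, so it applies only when the multiplier and the $y$-argument of $f$ carry the same time index, whereas the scheme naturally pairs the $\mathcal{F}_{t_i}$-measurable $Y_i$ with $f$ evaluated at $Y_{i+1}$. To bridge this I would use that only the $Y_{i+1}$-dependent part of $f$ correlates with the martingale increment $M$ (its $\mathcal{F}_{t_i}$-measurable part is orthogonal to $M$), bound that part through the $y$-Lipschitz constant $K$, and apply Young's inequality with parameter $\lambda_2$. This routes a $\lambda_2\,\E_{t_i}[M^2]$ piece to the left—accounting for the lone $\lambda_2$ in $A_3$ once $\E_{t_i}[M^2]\geq h\,|\hat{Z}_i|^2$ is invoked—and a $\lambda_2^{-1}Kh$ piece to the right, which together with the residual higher-order $h^2\,\E_{t_i}[f^2]$ terms (bounded again by the growth condition) yields the uniform $(1+\lambda_2^{-1})Kh$ corrections appearing in each of $A_3$, $A_4$, $A_5$ and the $Kf_0h$ term in $B_2$. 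Collecting the coefficients of $\E_{t_i}[|Y_{i+1}|^2]$, $h\,|X_i|^2$, $h\,|\hat{Z}_i|^2$ and the constants then reproduces $A_3$, $A_4$, $A_5$ and $B_2$. The delicate bookkeeping is ensuring the $\lambda_2$- and $\lambda_3$-Young splits send exactly the right fractions of the cross terms to the $\hat{Z}$-budget versus the $Y_{i+1}$-budget, so that the stated inequality—and in particular the sign structure $1-A_3 \geq 0$ needed at small $h$ in the subsequent sections—comes out with precisely these constants.
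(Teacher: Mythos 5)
First, a point of reference: the paper does not prove this lemma at all --- it is declared a ``standard estimate'' whose proof is omitted, being essentially the corresponding lemma of \cite{bender2008time} with second moments in place of differences. So your proposal can only be measured against the standard argument. Your overall architecture --- a one-step energy identity for $|Y|^2$, monotonicity in the $y$-slot producing $2k_f$, Young splits with $\lambda_2$ and $\lambda_3$, and Cauchy--Schwarz against $\Delta W_i$ to extract the $h\,|\hat{Z}_i|^2$ gain --- is the right one, and your recombination $2h\,\E_{t_i}[(Y_i+M)f]=2h\,\E_{t_i}[Y_{i+1}f]+2h^2\,\E_{t_i}[f^2]$ is a correct way to resolve the time-index mismatch before applying monotonicity.

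However, the linchpin of your argument is false as stated. You claim $\E_{t_i}[M\,\Delta W_i]=\E_{t_i}[Y_{i+1}\,\Delta W_i]=h\,\hat{Z}_i$ and deduce $\E_{t_i}[M^2]\geq h\,|\hat{Z}_i|^2$. But $M=Y_{i+1}-Y_i+f(t_i,X_i,Y_{i+1},\hat{Z}_i)\,h$, and $f$ depends on $Y_{i+1}$, which is not $\mathcal{F}_{t_i}$-measurable; hence $\E_{t_i}[M\,\Delta W_i]=h\,\hat{Z}_i+h\,\E_{t_i}[f\,\Delta W_i]$, where $\E_{t_i}[f\,\Delta W_i]=\E_{t_i}[(f-\E_{t_i}[f])\Delta W_i]$ is nonzero in general (it is only $O(\sqrt{h})$). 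Cauchy--Schwarz therefore lower-bounds $\E_{t_i}[M^2]$ by $h\,|\hat{Z}_i+\E_{t_i}[f\,\Delta W_i]|^2$, not by $h\,|\hat{Z}_i|^2$, so the entire left-hand gain $(1-A_3)h\,|\hat{Z}_i|^2$ does not follow from the step you wrote. The defect is repairable --- the discrepancy is absorbed by one more Young split at the cost of an $O(h^2)\,\E_{t_i}[f^2]$ term --- but the resulting $O(h)$ corrections then involve $f_x$, $f_z$, $f_0$ rather than the uniform $(1+\lambda_2^{-1})Kh$ of \eqref{eq:constants_A}, so you would prove a variant of the inequality rather than the lemma with exactly these constants (which matter downstream, e.g.\ in the choice of $\lambda_3(h)$ making $A_3=1$). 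The clean, standard route avoids $M$ altogether: from $Y_i=\E_{t_i}[Y_{i+1}]+h\,\E_{t_i}[f]$ use the exact identity $|\E_{t_i}[Y_{i+1}]|^2=\E_{t_i}[|Y_{i+1}|^2]-\E_{t_i}\bigl[|Y_{i+1}-\E_{t_i}[Y_{i+1}]|^2\bigr]$ together with $h\,|\hat{Z}_i|^2=h^{-1}\bigl|\E_{t_i}[(Y_{i+1}-\E_{t_i}[Y_{i+1}])\Delta W_i]\bigr|^2\leq\E_{t_i}\bigl[|Y_{i+1}-\E_{t_i}[Y_{i+1}]|^2\bigr]$; this produces the $\hat{Z}$ term with no interference from $f$, and the rest of your bookkeeping then goes through.
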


\begin{theorem}\label{estimate_iterate_growth} 
Suppose Theorem \ref{estimate_uniform_lipschitz} holds true. For any linearly growing $\varphi$, we find,
\begin{equation}
G\left(F_y^\pi(\varphi)\right) \leq \left[g_x+A_5 T\right] \left[ e^{\left[(A_1+D_1)+A_4+ (A_1+D_1) A_4 h\right] T+\left[(A_2+D_2)+ (A_2+D_2) A_4 h\right] T G(\varphi)} \vee 1 \right] ,
\end{equation}
\begin{equation}
\begin{aligned}
H\left(F_y^\pi(\varphi)\right) 
& \leq \left[e^{A_4 T} \vee 1\right] g_0+B_2 \Gamma_0^n\left(A_4\right)+c_0(\lambda, h, G(\varphi) ) \left[B_1 +D_3 + (A_2+D_2) H(\varphi)\right] \\ 
& \coloneqq c_1(\lambda, h, G(\varphi)) H(\varphi)+L_2(\lambda, h, G(\varphi)),
\end{aligned}
\end{equation}
where $\lambda_2, \lambda_3>0$ are supposed to fulfill $A_3\leq 1$.
\end{theorem}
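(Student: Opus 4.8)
The plan is to mirror the argument of Theorem \ref{estimate_lipschitz}, replacing the two difference estimates (Lemmas \ref{lem:estimate_two_X} and \ref{lem:estimate_two_Y}) by the two moment estimates (Lemmas \ref{lem:moment_x} and \ref{lem:moment_yandz}). I would fix the index $i$ and a deterministic starting point $x$, set $X_i = x$, and run the forward recursion of \eqref{full_scheme} driven by $(\varphi,\xi)$ from time $t_i$ onward; then $\Phi_i(x) = F_y^\pi(\varphi)(x)$ equals the value $Y_i$ produced by the backward recursion. Since Theorem \ref{estimate_uniform_lipschitz} is assumed, Lemma \ref{lem:moment_x} applies with the constants $A_1,A_2,B_1,D_1,D_2,D_3$ already absorbing the $Z$-coupling through $\bar{L}$ and the relations of Remark \ref{remark:growthcoeff}, so no separate bookkeeping of $\xi$ is required inside this proof.

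First I would run the forward estimate. Writing $\rho \coloneqq (A_1+D_1)+(A_2+D_2)G(\varphi)$ and $\kappa \coloneqq (B_1+D_3)+(A_2+D_2)H(\varphi)$, Lemma \ref{lem:moment_x} gives $\E[|X_{j+1}|^2] \leq (1+\rho h)\,\E[|X_j|^2] + \kappa h$, and iterating from $X_i=x$ yields $\E[|X_j|^2] \leq (1+\rho h)^{j-i}|x|^2 + \kappa\,\Gamma_0^{j-i}(\rho)$ for all $i\leq j\leq N$, using $h\sum_{k=0}^{j-i-1}(1+\rho h)^k = \Gamma_0^{j-i}(\rho)$. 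Next I would run the backward estimate: because $A_3\leq 1$, the term $(1-A_3)h|\hat{Z}_j|^2$ in Lemma \ref{lem:moment_yandz} is nonnegative and may be dropped, leaving $\E[|Y_j|^2] \leq (1+A_4 h)\,\E[|Y_{j+1}|^2] + A_5 h\,\E[|X_j|^2] + B_2 h$. Iterating from the terminal bound $\E[|Y_N|^2]\leq g_0 + g_x\,\E[|X_N|^2]$ (Assumption \ref{assume:lip}) down to $j=i$ and substituting the forward bound gives, since $X_i=x$ is deterministic,
\[
|\Phi_i(x)|^2 = |Y_i|^2 \leq g_x(1+A_4 h)^{N-i}(1+\rho h)^{N-i}|x|^2 + A_5 h\sum_{j=i}^{N-1}\bigl[(1+A_4 h)(1+\rho h)\bigr]^{j-i}|x|^2 + (\text{const}),
\]
where the constant part collects every term not carrying the factor $|x|^2$.

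The key simplification for the $|x|^2$-coefficient is the identity $(1+A_4 h)(1+\rho h) = 1+\tilde{B}h$ with $\tilde{B} \coloneqq (A_1+D_1)+A_4+(A_1+D_1)A_4 h + [(A_2+D_2)+(A_2+D_2)A_4 h]\,G(\varphi)$. Collapsing the geometric sum and majorizing $(1+\tilde{B}h)^{N-i}\leq e^{\tilde{B}T}\vee 1$ and $A_5 h\sum_j (1+\tilde{B}h)^{j-i}\leq A_5 T(e^{\tilde{B}T}\vee 1)$ then yields $G(F_y^\pi(\varphi)) \leq [g_x+A_5 T](e^{\tilde{B}T}\vee 1)$, which is the stated bound. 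For the constant part, the two contributions carrying the factor $\kappa$—the terminal-variance term $g_x(1+A_4 h)^{N-i}\Gamma_0^{N-i}(\rho)$ and the running-variance term $A_5 h\sum_j (1+A_4 h)^{j-i}\Gamma_0^{j-i}(\rho)$—are bounded respectively by $g_x\Gamma_1^N(A_4,\rho)$ and $A_5\Gamma_0^N(A_4)\Gamma_0^N(\rho)$, whose sum is exactly $c_0(\lambda,h,G(\varphi))$; the remaining two terms give $(e^{A_4 T}\vee 1)g_0 + B_2\Gamma_0^N(A_4)$. Recalling $\kappa=(B_1+D_3)+(A_2+D_2)H(\varphi)$ and the definitions of $c_1$ and $L_2$ produces the advertised identity $H(F_y^\pi(\varphi)) \leq c_1(\lambda,h,G(\varphi))H(\varphi) + L_2(\lambda,h,G(\varphi))$.

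I expect the main obstacle to be the final bookkeeping rather than any deep estimate: matching the terminal-variance term to the supremum form $\Gamma_1^N(A_4,\rho)$ and the running-variance term to the product $\Gamma_0^N(A_4)\Gamma_0^N(\rho)$, and in particular justifying the monotonicity $\Gamma_0^{k}(\cdot)\leq\Gamma_0^N(\cdot)$ together with the $\vee 1$ truncations when the exponents $\tilde{B}$ or $A_4$ may be negative. Keeping these sign considerations consistent, and tracking the $h$-dependent cross terms inside $\tilde{B}$ and $\kappa$, is where the care is needed; everything else is a routine iteration entirely parallel to Theorem \ref{estimate_lipschitz}.
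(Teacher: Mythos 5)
Your proposal is correct and follows essentially the same route as the paper's proof: fix $(i_0,x)$, iterate the moment bound of Lemma \ref{lem:moment_x} forward, drop the nonnegative $(1-A_3)h|\hat Z|^2$ term in Lemma \ref{lem:moment_yandz} and iterate backward from $|Y_N|^2\le g_0+g_x|X_N|^2$, then separate the $|x|^2$-coefficient from the constant part and control the resulting sums via $\Gamma_0^N$ and $\Gamma_1^N$ using \eqref{bounds_gamma_functions}. The bookkeeping you flag as the main obstacle (the $\vee 1$ truncations and matching the terminal- and running-variance terms to $g_x\Gamma_1^N(A_4,\rho)$ and $A_5\Gamma_0^N(A_4)\Gamma_0^N(\rho)$, i.e.\ to $c_0$) is handled in the paper exactly as you describe.
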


\begin{proof}
We use the notation $\Phi \coloneqq F_y^\pi(\varphi)$, fix an initial pair, $\left(i_0, x\right)$, and define, for $i=i_0, \ldots, N-1$,
\begin{equation}
\left\{
\begin{aligned}
& X_{i_0} \coloneqq x, \\
& X_{i+1} \coloneqq X_i + b (t_i, X_i, \varphi_i (X_i), \xi_i (X_i)  ) h +  \sigma (t_i, X_i, \varphi_i(X_i) ) \Delta W_{i}, \\
& Y_N \coloneqq g(X_N), \\
& \hat{Z}_i \coloneqq \frac{1}{h} \E_{t_i}\left[ Y_{i+1} \Delta W_{i}\right], \\
& Y_i \coloneqq Y_{i+1} + f (t_i, X_i, Y_{i+1}, \hat{Z}_i ) h - \int_{t_i}^{t_{i+1}} Z_t  \, d W_t .
\end{aligned}\right.
\end{equation}
Obviously $Y_{i_0} = \Phi_{i_0}(x)$. We obtain from Lemma \ref{lem:moment_x} that
\begin{equation}
\E \left[ \left|X_{i+1}\right|^2\right] 
\leq \left[1+(A_1+D_1)h+(A_2+D_2)h G(\varphi)\right] \E \left[ \left|X_i\right|^2\right] +  \left[B_1+D_3+(A_2+D_2) H(\varphi)\right] h .
\end{equation}
Then, by iteration, for $i=i_0, \ldots, N-1$, we find,
\begin{equation}\label{ineq:moment_x_estimate}
\begin{aligned}
\E \left[\left|X_i\right|^2\right]
\leq & \left[1+(A_1+D_1) h+(A_2+D_2) h G(\varphi)\right]^{i-i_0} \E \left[ \left|X_{i_0}\right|^2 \right]  \\
& + \left[B_1+D_3+(A_2+D_2) H(\varphi)\right] h \sum_{j=i_0}^{i-1}\left[1+(A_1+D_1) h+(A_2+D_2) h G(\varphi)\right]^{j-i_0} \\
= & {\left[1+(A_1+D_1) h+(A_2+D_2) h G(\varphi)\right]^{i-i_0}|x|^2 } \\
& +\left[B_1+D_3+(A_2+D_2) H(\varphi)\right] \Gamma_0^{i-i_0}\left(A_1+D_1 +(A_2+D_2) G(\varphi)\right).
\end{aligned}
\end{equation}
Recall the definition of $\Gamma_0^i(\cdot)$, which equals the sum of the geometric progression in this case.

Next, applying Lemma \ref{lem:moment_yandz} and recalling that $A_3\leq 1$, we have
$$
\E \left[ \left|Y_i\right|^2\right]  
\leq \left[1+A_4 h\right] \E\left[ \left|Y_{i+1}\right|^2\right] + A_5 h \E\left[\left|X_i\right|^2\right] + B_2 h .
$$
Note that at terminal time, we have
$$
\left|Y_N\right|^2 \leq g_0+g_x \left|X_N\right|^2.
$$
Iterating this $Y$ process from $i_0$ to $N$, gives us,
\begin{equation}
\begin{aligned}
\left|\Phi_{i_0}(x)\right|^2 
= & \left|Y_{i_0}\right|^2  \\
\leq & \left(1+A_4 h\right)^{N-i_0} \left[g_0+g_x \E\left[\left|X_N\right|^2\right]\right] + A_5 h \sum_{i=i_0}^{N-1} \left(1+A_4 h\right)^{i-i_0} \E\left[ \left|X_i\right|^2\right] + B_2 \Gamma_0^{N-i_0}\left(A_4\right).
\end{aligned}
\end{equation}
This, together with \eqref{ineq:moment_x_estimate} and the definition of $G(\cdot)$ and $H(\cdot)$, implies
\begin{equation}
\begin{aligned}
G\left(\Phi_{i_0}\right) 
\leq & \left(1+A_4 h\right)^{N-i_0} g_x \left[1+(A_1+D_1) h+(A_2+D_2) h G(\varphi)\right]^{N-i_0} \\
& + A_5 h \sum_{i=i_0}^{N-1}\left(1+A_4 h\right)^{i-i_0}\left[1+(A_1+D_1)h+(A_2+D_2) h G(\varphi)\right]^{i-i_0}, \\
H\left(\Phi_{i_0}\right) 
\leq  & \left(1+A_4 h\right)^{N-i_0} g_0+B_2 \Gamma_0^{N-i_0}\left(A_4\right) \\
& + \left[B_1 + D_3 + (A_2+D_2) H(\varphi)\right]  \left[g_x\left(1+A_4 h\right)^{N-i_0} \Gamma_0^{N-i_0}\left(A_1+D_1+(A_2+D_2) G(\varphi)\right)\right.  \\
&\left. +A_5 h \sum_{i=i_0}^{N-1}\left(1+A_4 h\right)^{i-i_0} \Gamma_0^{i-i_0}\left(A_1+D_1+(A_2+D_2)G(\varphi)\right) \right].
\end{aligned}
\end{equation}
Note that, for $0 \leq i \leq N$,
\begin{equation} \label{bounds_gamma_functions}
(1+x h)^i \leq e^{x T} \vee 1, \quad \Gamma_0^i(x) \leq \Gamma_0^N(x) ,\quad (1+x h)^i \Gamma_0^i(y) \leq \Gamma_1^N(x, y).
\end{equation}
Then,
\begin{equation}
\begin{aligned}
& G\left(\Phi_{i_0}\right)  
\leq \left[g_x+A_5 T\right] \left[ e^{\left[(A_1+D_1)+A_4+ (A_1+D_1) A_4 h\right] T+\left[(A_2+D_2)+ (A_2+D_2) A_4 h\right] T G(\varphi)} \vee 1 \right], \\
& H\left(\Phi_{i_0}\right)  
\leq \left[e^{A_4 T} \vee 1\right] g_0+B_2 \Gamma_0^N \left(A_4\right)+c_0(\lambda, h, G(\varphi) ) \left[B_1 +D_3 + (A_2+D_2) H(\varphi)\right].
\end{aligned}
\end{equation}
Since the right-hand side does not depend on $i_0$, the assertion is proved.
\end{proof}

\begin{theorem}\label{estimate_uniform_growth} 
Assume condition \eqref{condition:L0} holds, as well as the following bound,
\begin{equation}\label{condition_c1}
c_1\left(L_1\right)<1.
\end{equation} 
For sufficiently small $h$, and any constants $\bar{G}, c_1$ and $L_2$ satisfying $\bar{G}>L_1, c_1\left(L_1\right)<c_1<1$ and $L_2>L_2\left(L_1\right)$, respectively, we have
\begin{equation}
G\left(u^{\pi, m}\right) \leq \bar{G}, \quad H\left(u^{\pi, m}\right) \leq \frac{L_2}{1-c_1},  \quad \forall m,
\end{equation}
and, moreover,
\begin{equation}
G\left( v^{\pi, m}\right) \leq \bar{L} (\sigma_x + \sigma_y \bar{G}), \quad H\left(v^{\pi, m}\right) \leq  \bar{L} (\sigma_0 + \sigma_y \frac{L_2}{1-c_1} ),  \quad \forall m,
\end{equation}
where constant $\bar{L}$ is the upper bound given by Theorem \ref{estimate_uniform_lipschitz}.
\end{theorem}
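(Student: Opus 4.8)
The plan is to mirror the fixed-point induction of Theorem~\ref{estimate_uniform_lipschitz} for the growth coefficient $G$, then add an affine-contraction argument for $H$, and finally read off the bounds for $v^{\pi,m}$ from Remark~\ref{remark:growthcoeff}. Throughout, the uniform Lipschitz bound $L(u^{\pi,m}) \le \bar L$ from Theorem~\ref{estimate_uniform_lipschitz} is taken as given; this is essential, since the constants $D_1, D_2, D_3$ appearing in Theorem~\ref{estimate_iterate_growth} are defined through $\bar L$, so the whole growth analysis is downstream of the Lipschitz analysis. I would also initialise with $u^{\pi,0} \equiv 0$, so that $G(u^{\pi,0}) = H(u^{\pi,0}) = 0$, giving clean base cases for both inductions.

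For the uniform bound on $G$, I would use $F_y^\pi(u^{\pi,m}) = u^{\pi,m+1}$ together with the first estimate of Theorem~\ref{estimate_iterate_growth}, which yields a recursion of the form
\[
G(u^{\pi,m+1}) \le [g_x + A_5 T]\bigl(e^{\,P(h) + Q(h)\,G(u^{\pi,m})} \vee 1\bigr),
\]
with $P,Q$ depending on $h$ only through the constants $A_j, D_j$. This is structurally identical to the Lipschitz recursion in the proof of Theorem~\ref{estimate_uniform_lipschitz}: first show $G(u^{\pi,m}) < \infty$ for each $m$ by induction; then, introducing the scaled sequence $\tilde G_m := Q(h)\,G(u^{\pi,m})$, prove by induction that $\tilde G_m$ stays below a fixed level, the smallness condition \eqref{condition:L0} being exactly what keeps the iteration subcritical. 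With the choice $\lambda_2(h) = \sqrt h$, $\lambda_3(h) = 1 - (1+K)\sqrt h - Kh$ (so that $A_3 = 1$) and the limits as $h \downarrow 0$, the discrete Gronwall step gives $G(u^{\pi,m}) \le L_1(\lambda,h) \to L_1$. Hence for any $\bar G > L_1$ and $h$ small enough, $G(u^{\pi,m}) \le \bar G$ for all $m$.

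For the uniform bound on $H$, I would use the second estimate of Theorem~\ref{estimate_iterate_growth}, namely the affine recursion
\[
H(u^{\pi,m+1}) \le c_1\bigl(\lambda,h,G(u^{\pi,m})\bigr)\,H(u^{\pi,m}) + L_2\bigl(\lambda,h,G(u^{\pi,m})\bigr).
\]
Since $c_0$, and hence $c_1$ and $L_2$, are monotone increasing in their $G$-argument (as $\Gamma_0,\Gamma_1$ are increasing), I would insert the $G$-bound just obtained and dominate the coefficients by $c_1(\lambda,h,L_1(\lambda,h))$ and $L_2(\lambda,h,L_1(\lambda,h))$. Using the limits $c_1(\lambda(h),h,\cdot) \to c_1(\cdot)$ and $L_2(\lambda(h),h,\cdot) \to L_2(\cdot)$ as $h \downarrow 0$, the strict inequality $c_1(L_1) < 1$ from \eqref{condition_c1}, and continuity, I would fix $h$ small so that $c_1(\lambda,h,L_1(\lambda,h)) \le c_1 < 1$ and $L_2(\lambda,h,L_1(\lambda,h)) \le L_2$. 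The recursion then reduces to $H(u^{\pi,m+1}) \le c_1 H(u^{\pi,m}) + L_2$; starting from $H(u^{\pi,0}) = 0$, a one-line induction gives $H(u^{\pi,m}) \le L_2/(1-c_1)$ for all $m$.

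Finally, the bounds on $v^{\pi,m}$ follow by applying Remark~\ref{remark:growthcoeff} with $(\varphi,\xi) = (u^{\pi,m}, v^{\pi,m})$: since $G(v^{\pi,m}) = L(u^{\pi,m})(\sigma_x + \sigma_y G(u^{\pi,m}))$ and $H(v^{\pi,m}) = L(u^{\pi,m})(\sigma_0 + \sigma_y H(u^{\pi,m}))$, substituting $L(u^{\pi,m}) \le \bar L$, $G(u^{\pi,m}) \le \bar G$ and $H(u^{\pi,m}) \le L_2/(1-c_1)$ gives exactly the claimed estimates. I expect the main obstacle to be the $H$-step: the contraction factor $c_1$ depends on $G(u^{\pi,m})$, so the argument must be sequenced carefully (the uniform $G$-bound has to be in place before $H$ can be treated), and the single pair $\lambda_2(h),\lambda_3(h)$ must simultaneously enforce $A_3 \le 1$, keep the $G$-iteration subcritical via \eqref{condition:L0}, and push $c_1$ below $1$ in the limit --- it is the compatibility of these three requirements, rather than any individual estimate, that is delicate.
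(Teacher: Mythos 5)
Your proposal is correct and follows essentially the same route as the paper's proof: the $G$-bound is obtained by repeating the subcritical fixed-point induction of Theorem~\ref{estimate_uniform_lipschitz} under condition \eqref{condition:L0}, the $H$-bound by dominating the coefficients of the affine recursion $H_m \le c_1(\lambda,h,G_{m-1})H_{m-1} + L_2(\lambda,h,G_{m-1})$ via the $G$-bound, the limits as $h\downarrow 0$, and $c_1(L_1)<1$, and the $v$-bounds by Remark~\ref{remark:growthcoeff}. The only cosmetic difference is that the paper chooses an intermediate $\bar G>L_1$ with $c_1(\bar G)\le c_1$ and $L_2(\bar G)< L_2$ rather than evaluating the coefficients at $L_1(\lambda,h)$ directly, which is the same maneuver.
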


\begin{proof}

Denote by $G_m \coloneqq G\left(u^{\pi, m}\right), H_m \coloneqq H\left(u^{\pi, m}\right)$. Obviously, $G_0 = H_0 = 0$. We may now conclude from Theorem \ref{estimate_iterate_growth} that, under $A_3\leq 1$,
\begin{equation}\label{estimate_iterate_Gm}
G_m \leq \left[g_x+A_5 T\right] \left[ e^{\left[(A_1+D_1)+A_4+ (A_1+D_1) A_4 h\right] T+\left[(A_2+D_2)+ (A_2+D_2) A_4 h\right] T G_{m-1}} \vee 1 \right] ,    
\end{equation}
\begin{equation}
H_m \leq c_1\left(\lambda, h, G_{m-1}\right) H_{m-1}+L_2\left(\lambda, h, G_{m-1}\right).
\end{equation}

Recall the choices for $\lambda_2(h)$ and $\lambda_3(h)$, as in \eqref{choice_lambda2_lambda3}, for sufficiently small $h$, which guarantees $A_3 = 1$.

Since we require the condition $L_0<e^{-1}$ to hold, for any $\bar{G}>L_1$, we may follow the same arguments as in the proof of Theorem \ref{estimate_uniform_lipschitz} and get $G\left(u^{\pi, m}\right) \leq \bar{G}$ from \eqref{estimate_iterate_Gm}. Note that
\begin{equation}
\begin{aligned}
& \lim_{N \to \infty} \Gamma_0^N(x) = T \Gamma_0(x T), & \lim_{N \to \infty} \Gamma_1^N(x, y) =T \Gamma_1(x T, y T), \\
& \lim_{h \downarrow 0} c_1(\lambda(h), h, G) = c_1(G), & \lim_{h \downarrow 0} L_2(\lambda(h), h, G)  =  L_2(G).
\end{aligned}
\end{equation}

For any constants, $c_1$ and $L_2$ satisfying $c_1\left(L_1\right)<c_1<1$ and $ L_2\left(L_1\right)<L_2$, we can choose $\bar{G}>L_1$ such that $c_1(\bar{G}) \leq c_1$ and $L_2(\bar{G})<L_2$. Then, for sufficiently small $h$, it holds that $c_1(\lambda(h), h, \bar{G}) \leq c_1$ and $L_2(\lambda(h), h, \bar{G}) \leq L_2$. Using these upper bounds, we get
\begin{equation}
H_m \leq c_1 H_{m-1}+L_2,
\end{equation}
which leads to the desired result by solving the recursive inequality.

Finally, the results for $G\left(v^{\pi, m}\right)$ and $H\left(v^{\pi, m}\right)$ can be easily obtained by using the relation established in Remark \ref{remark:growthcoeff}. 
\end{proof}

\subsection{Convergence of Markovian iteration}\label{sec34}

With the results from the previous subsections, we are now able to derive the convergence in the iteration steps, as stated in Theorem \ref{thm:existence_uniqueness} in this subsection.

As before, we give the definitions of some functions first, that will be used afterwards.
\begin{equation}\label{def:c2functions}
\begin{aligned}
c_2 (\lambda_1, h, L, G ) 
= & 
\left[ e^{\left[ (A_1+D_1) + (A_2+D_2) G \right] T} \vee 1 \right]
(1+\lambda_1^{-1}) (A_2 + (b_z + b_zh) \bar{L} \sigma_y ) \\
& \times
\left[ g_x \Gamma_1^N (A_4, A_1+1 + (1+\lambda_1) (A_2 + (b_z + b_z h)(2\sigma_x+2\sigma_y+2\Sigma) )  L )  \right. \\
& + \left.  A_5 \Gamma_0^N \left(A_4\right) \Gamma_0^N ( A_1+1 + (1+\lambda_1) (A_2 + (b_z + b_z h)(2\sigma_x+2\sigma_y+2\Sigma) )  L  
\right], \\
c_2\left(\lambda_1, L, G\right) \coloneqq &  \lim_{h\to 0} c_2\left(\lambda_1, h, L, G\right), \\
c_2(L, G) \coloneqq & \inf_{\lambda_1>0} c_2\left(\lambda_1, L, G\right).
\end{aligned}
\end{equation}
 
It should be noticed that we have a more general definition for $c_2 (\lambda_1, h, L, G )$ here compared to the similar definition in \cite{bender2008time}. Particularly, $c_2 (\lambda_1, h, L, G )$ now depends on the constant $\bar{L}$, which is given by Theorem \ref{estimate_uniform_lipschitz}. Hence we require condition \eqref{condition:L0} whenever $c_2 (\lambda_1, h, L, G )$ is used.

\begin{theorem}\label{thm:growth_of_difference}
Assume $\varphi^1, \varphi^2, \xi^1, \xi^2$ have linear growth and $\varphi^1$, $\xi^1$ are Lipschitz continuous. Then, for any $\lambda_1>0$, we find,
\begin{equation}\label{growth_of_difference_1}
\begin{aligned}
G\left( F_y^\pi  \left(\varphi^1\right) - F_y^\pi  \left(\varphi^2\right)\right) 
& \leq c_2\left(\lambda_1, h, L\left(\varphi^1\right), G\left(\varphi^2\right)\right) G\left(\varphi^1-\varphi^2\right), \\
H\left( F_y^\pi  \left(\varphi^1\right) - F_y^\pi  \left(\varphi^2\right)\right) 
& \leq c_2\left(\lambda_1, h, L\left(\varphi^1\right), G\left(\varphi^2\right)\right) H\left(\varphi^1-\varphi^2\right) \\
& \quad + c_2\left(\lambda_1, h, L\left(\varphi^1\right), G\left(\varphi^2\right)\right) ( (B_1+D_3)+(A_2+D_2) H(\varphi^2) ) T  G\left(\varphi^1-\varphi^2\right),
\end{aligned}
\end{equation}
where $\lambda_2, \lambda_3$ are chosen such that $A_3\leq 1$ holds, and consequentially,
\begin{equation}\label{growth_of_difference_2}
\begin{aligned}
G\left( F_z^\pi  \left(\xi^1\right)- F_z^\pi  \left(\xi^2\right)\right) 
& \leq c_2\left(\lambda_1, h, L\left(\varphi^1\right), G\left(\varphi^2\right)\right) G\left(\xi^1-\xi^2\right), \\
H\left( F_z^\pi  \left(\xi^1\right)- F_z^\pi  \left(\xi^2\right)\right) 
& \leq c_2\left(\lambda_1, h, L\left(\varphi^1\right), G\left(\varphi^2\right)\right) H\left(\xi^1-\xi^2\right) \\
& \quad + c_2\left(\lambda_1, h, L\left(\varphi^1\right), G\left(\varphi^2\right)\right) ( (B_1+D_3)+(A_2+D_2) H(\varphi^2) ) T  G\left(\xi^1-\xi^2\right).
\end{aligned}
\end{equation}
\end{theorem}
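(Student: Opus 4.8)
The plan is to mimic the proof of Theorem \ref{estimate_iterate_growth}, but now to propagate the \emph{difference} of two forward--backward systems instead of a single one, and to use the differentiation setup to convert every $\xi$-difference into a $\varphi$-difference. Fix an initial pair $(i_0, x)$ and, for $l=1,2$, let $(X^l, Y^l, \hat Z^l)$ be the discrete solution generated by $(\varphi^l, \xi^l)$ with the \emph{common} start $X_{i_0}^1 = X_{i_0}^2 = x$, so that $[F_y^\pi(\varphi^l)]_{i_0}(x) = Y_{i_0}^l$. Writing $\Delta X_i = X_i^1 - X_i^2$ and $\Delta Y_i = Y_i^1 - Y_i^2$, the two ingredients are the forward estimate of Lemma \ref{lem:estimate_two_X}, taken with $\lambda_0 = \lambda_1$, and the backward estimate of Lemma \ref{lem:estimate_two_Y}, precisely as in Theorem \ref{estimate_iterate_growth}. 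Since $\Delta X_{i_0} = 0$, the forward recursion is purely inhomogeneous, driven by the two source terms $|\varphi^1(X_i^2) - \varphi^2(X_i^2)|^2$ and $|\xi^1(X_i^2) - \xi^2(X_i^2)|^2$.

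First I would reduce both source terms to the $\varphi$-difference. In the homogeneous coefficient of $|\Delta X_i|^2$, the Lipschitz relation $L(\xi^1) = (2\sigma_x + 2\sigma_y + 2\Sigma)L(\varphi^1)$ of Remark \ref{remark:lipconstants} turns the $\xi^1$-dependence into the growth rate $A_1 + 1 + (1+\lambda_1)(A_2 + (b_z + b_zh)(2\sigma_x + 2\sigma_y + 2\Sigma))L(\varphi^1)$ that appears inside the $\Gamma$-functions of \eqref{def:c2functions}. For the inhomogeneous source I would use the difference analogue of Remark \ref{remark:growthcoeff}, namely $|\xi^1(x) - \xi^2(x)|^2 \le \bar L\sigma_y\,|\varphi^1(x) - \varphi^2(x)|^2$: only the $\sigma_y$-term survives because both fields are evaluated at the same $x$, while $|\partial_x\varphi^1|^2 \le \bar L$ by Theorem \ref{estimate_uniform_lipschitz}. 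With $\lambda_0 = \lambda_1$ this collapses the two source contributions into the single coefficient $(1+\lambda_1^{-1})(A_2 + (b_z + b_zh)\bar L\sigma_y)$ multiplying $G(\varphi^1-\varphi^2)|X_i^2|^2 + H(\varphi^1-\varphi^2)$.

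Next I would close the forward recursion and feed it into the backward one. Bounding $\E[|X_i^2|^2]$ via Lemma \ref{lem:moment_x} together with the uniform growth $G(\varphi^2)\le \bar G$ of Theorem \ref{estimate_uniform_growth} produces the $|x|^2$-coefficient $[1 + (A_1+D_1)h + (A_2+D_2)hG(\varphi^2)]^{i-i_0}$ and the constant part $[(B_1+D_3)+(A_2+D_2)H(\varphi^2)]\Gamma_0^{i-i_0}$; iterating the forward recursion then gives $\E[|\Delta X_i|^2]$ as a convolution of two geometric sequences. Iterating the backward recursion from $i_0$ to $N$, with terminal bound $|\Delta Y_N|^2 \le g_x|\Delta X_N|^2$, increment $A_5 h|\Delta X_i|^2$, and the nonnegative $\hat Z$-term discarded since $A_3 \le 1$, I would collect the coefficient of $|x|^2$: the nested sums reorganise into $g_x\Gamma_1^N(A_4,\cdot) + A_5\Gamma_0^N(A_4)\Gamma_0^N(\cdot)$ with outer factor $e^{[(A_1+D_1)+(A_2+D_2)G(\varphi^2)]T}\vee 1$, i.e. exactly $c_2(\lambda_1, h, L(\varphi^1), G(\varphi^2))$, yielding the $G$-bound in \eqref{growth_of_difference_1}. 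The constant terms split into the direct $H(\varphi^1-\varphi^2)$ contribution and the pairing of $G(\varphi^1-\varphi^2)$ against the constant part of $\E[|X_i^2|^2]$; after a $\Gamma_0 \le T$ bound the latter gives the second summand $c_2\cdot((B_1+D_3)+(A_2+D_2)H(\varphi^2))T\,G(\varphi^1-\varphi^2)$.

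Finally, \eqref{growth_of_difference_2} follows from \eqref{growth_of_difference_1} by the differentiation setup. Since $F_z^\pi(\xi^l) = \partial_x F_y^\pi(\varphi^l)\,\sigma(t_i,\cdot,F_y^\pi(\varphi^l))$, the difference analogue of Remark \ref{remark:growthcoeff} applied to the \emph{output} fields bounds the growth coefficients of $F_z^\pi(\xi^1)-F_z^\pi(\xi^2)$ by $\bar L\sigma_y$ times those of $F_y^\pi(\varphi^1)-F_y^\pi(\varphi^2)$, while the same relation applied to the \emph{input} fields identifies $\bar L\sigma_y\,G(\varphi^1-\varphi^2)$ with $G(\xi^1-\xi^2)$ (and likewise for $H$), so the common factor $c_2$ is preserved. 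I expect the main obstacle to be precisely this reduction of the $\xi$-differences to $\varphi$-differences: one must justify, within the differentiation framework and using $\bar L$ and Assumption \ref{assume:bounded-sigma}, that the growth coefficients of $\xi^1-\xi^2$ are controlled by those of $\varphi^1-\varphi^2$ — the term involving $\partial_x\varphi^1 - \partial_x\varphi^2$ being the delicate point — since this is exactly what lets the two intertwined fields be handled through the single coefficient $c_2$. The remaining effort is the lengthy but routine bookkeeping of geometric sums needed to match the precise form of \eqref{def:c2functions}.
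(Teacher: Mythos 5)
Your overall architecture coincides with the paper's proof: fix $(i_0,x)$, run the two forward systems from the common start so that $\Delta X_{i_0}=0$, apply Lemma \ref{lem:estimate_two_X} with $\lambda_0=\lambda_1$ so the recursion is driven by the two source terms, bound $\sup_i\E[|X_i^2|^2]$ by the quantity the paper calls $\mathcal{X}$, iterate forward, feed into Lemma \ref{lem:estimate_two_Y} with $A_3\le 1$, and collect the geometric sums into $c_2$; the passage to \eqref{growth_of_difference_2} via the difference analogue of Remark \ref{remark:growthcoeff} applied to inputs and outputs is also exactly the paper's route. The one step that does not survive scrutiny is the pointwise inequality $|\xi^1(x)-\xi^2(x)|^2\le \bar L\sigma_y\,|\varphi^1(x)-\varphi^2(x)|^2$. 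Writing
\begin{equation}
\xi^1(x)-\xi^2(x)=\partial_x\varphi^1(x)\bigl[\sigma(t,x,\varphi^1(x))-\sigma(t,x,\varphi^2(x))\bigr]+\bigl[\partial_x\varphi^1(x)-\partial_x\varphi^2(x)\bigr]\sigma(t,x,\varphi^2(x)),
\end{equation}
the first term is indeed controlled by $\bar L\sigma_y|\Delta\varphi(x)|^2$, but the second is proportional to $\partial_x\varphi^1(x)-\partial_x\varphi^2(x)$, which is not dominated by $|\varphi^1(x)-\varphi^2(x)|$ at a fixed $x$ (two functions can agree at a point while their derivatives differ there). You correctly flag this as ``the delicate point,'' but you leave it unresolved, and as a pointwise claim it is simply false; since the entire collapse of the two source terms into the single coefficient $(1+\lambda_1^{-1})(A_2+(b_z+b_zh)\bar L\sigma_y)$ rests on it, this is a genuine gap.

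The paper avoids the issue by never comparing $\xi^1$ and $\xi^2$ pointwise. It bounds $|\Delta\xi(X_i^2)|^2\le G(\Delta\xi)|X_i^2|^2+H(\Delta\xi)$ with the crude (triangle-inequality) choices $G(\Delta\xi)=2(G(\xi^1)+G(\xi^2))$ and $G(\Delta\varphi)=2(G(\varphi^1)+G(\varphi^2))$, applies Remark \ref{remark:growthcoeff} to each $\xi^l$ \emph{separately} to get $G(\Delta\xi)=4\bar L\sigma_x+\bar L\sigma_y G(\Delta\varphi)$ and $H(\Delta\xi)=4\bar L\sigma_0+\bar L\sigma_y H(\Delta\varphi)$, and then ``levels up'' the (non-minimal) coefficients $G(\varphi^l),H(\varphi^l)$ by $\sigma_x/\sigma_y$ and $\sigma_0/\sigma_y$ to absorb the additive constants, arriving at the purely linear relations $G(\Delta\xi)=\bar L\sigma_y G(\Delta\varphi)$ and $H(\Delta\xi)=\bar L\sigma_y H(\Delta\varphi)$ used in \eqref{linearrelation_GH}. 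In other words, the reduction is performed at the level of linear-growth coefficients of the difference, where the troublesome $\partial_x\varphi^1-\partial_x\varphi^2$ term never appears; replacing your pointwise claim with this coefficient-level argument repairs the proof and the rest of your bookkeeping then goes through as you describe.
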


\begin{proof}
For $l=1,2$, we denote by $\Phi^l \coloneqq F_y^\pi (\varphi^l)$ and $\psi^l \coloneqq F_z^\pi (\xi^l)$. We fix $\left(i_0, x\right)$ and define $(X^l, Y^l, \hat{Z}^l)$ satisfying the following scheme:
\begin{equation}
\left\{
\begin{aligned}
& X_{i_0}^l \coloneqq x, \\
& X_{i+1}^l \coloneqq X_i^l
+b(t_i, X_i^l, \varphi_i^l(X_i^l), \xi_i^l(X_i^l)) h
+\sigma(t_i, X_i^l, \varphi_i^l(X_i^l)) \Delta W_i, \\
& Y_N^l \coloneqq g(X_N^l), \\
& \hat{Z}_i^l \coloneqq \frac{1}{h} \E_{t_i}[Y_{i+1}^l \Delta W_i], \\
& Y_i^l \coloneqq Y_{i+1}^l
+f(t_i, X_i^l, Y_{i+1}^l, \hat{Z}_i^l) h
-\int_{t_i}^{t_{i+1}} Z_t^l \,\mathrm{d}W_t.
\end{aligned}
\right.
\end{equation}
Then, obviously, $Y_{i_0}^l=\Phi_{i_0}^l(x)$. We use the notation,
\begin{equation}
\begin{aligned}
& \Delta X \coloneqq X^1-X^2, \; \Delta Y \coloneqq Y^1-Y^2, \;\; \Delta \hat{Z} \coloneqq \hat{Z}^1-\hat{Z}^2 \\
& \Delta \varphi \coloneqq \varphi^1-\varphi^2, \quad  \Delta \Phi \coloneqq \Phi^1-\Phi^2, \quad \Delta \xi \coloneqq \xi^1-\xi^2, \quad \Delta \psi \coloneqq \psi^1-\psi^2.
\end{aligned}
\end{equation}
Application of Lemma \ref{lem:estimate_two_X} yields, for any $\lambda_0>0$ and $\lambda_1>0$,
\begin{equation}\label{ineq:Xsecondmoment}
\begin{aligned}
\E \left[ \left|\Delta X_{i+1} \right|^2 \right]
\leq & \left[1 + (A_1+1) h + (1+\lambda_1) A_2 h L (\varphi^1 ) + (1+\lambda_0) (b_z + b_z h) h L (\xi^1) \right]  \E \left[  \left| \Delta X_{i} \right|^2 \right]  \\
& + \left(1+\lambda_1^{-1}\right) A_2 h  | \Delta \varphi (X_i^2) |^2 \\
& + \left(1+\lambda_0^{-1}\right) (b_z + b_z h)h  | \Delta \xi (X_i^2) |^2 .
\end{aligned}
\end{equation}
Notice that both the differences $\Delta \varphi$ and $\Delta \xi$ are of linear growth, and thus we can write 
\begin{equation}
\left|\Delta \varphi\left(X_i^2\right)\right|^2 \leq G(\Delta \varphi)\left|X_i^2\right|^2+H(\Delta \varphi),
\end{equation}
\begin{equation}
\left|\Delta \xi \left(X_i^2\right)\right|^2 \leq G(\Delta \xi)\left|X_i^2\right|^2+ H(\Delta \xi),
\end{equation}
where we can simply choose the coefficients to be
\begin{equation}
\begin{aligned}
& G(\Delta \varphi) =  2 (G(\varphi^1) + G (\varphi^2)) ,\;\; H(\Delta \varphi) =  2 (H(\varphi^1) + H(\varphi^2)), \\
& G(\Delta \xi) =  2 (G( \xi^1) + G ( \xi^2)) ,\quad H(\Delta  \xi) =  2 (H( \xi^1) + H( \xi^2)).
\end{aligned}
\end{equation}
Then recall the relation established in Remark \ref{remark:growthcoeff}, for any linear growth coefficients $G(\varphi^l)$ and $H(\varphi^l)$ we can further write 
\begin{equation}\label{relation:delta_GH}
\begin{aligned}
& G(\Delta \xi) 
=  2 \bar{L} (\sigma_x + \sigma_y G(\varphi^1) + \sigma_x + \sigma_y G(\varphi^2) ) 
=  4 \bar{L} \sigma_x + \bar{L}\sigma_y G(\Delta \varphi),  \\
& H(\Delta \xi) 
=  2 \bar{L} (  \sigma_0 + \sigma_y H(\varphi^1)  + \sigma_0 + \sigma_y H(\varphi^2) )
=  4 \bar{L} \sigma_0 + \bar{L} \sigma_y H(\Delta \varphi). 
\end{aligned}
\end{equation}
An important observation is that the linear growth coefficients, $G(\varphi^l)$ and $H(\varphi^l)$, do not have to be the minimum ones. This allows us to level up the coefficient to facilitate the convergence analysis, i.e. we may consider that $G(\varphi^l)$ and $H(\varphi^l)$ are large enough such that $\tilde{G}(\varphi^l) \coloneqq G(\varphi^l) - \frac{\sigma_x}{\sigma_y}$ and $\tilde{H}(\varphi^l) \coloneqq H(\varphi^l) - \frac{\sigma_0}{\sigma_y}$ are still linear growth coefficients. Together with \eqref{relation:delta_GH}, we can derive 
\begin{equation}\label{linearrelation_GH}
\begin{aligned}
G(\Delta \xi) 
=  \bar{L} \sigma_y G( \Delta \varphi) ),  \quad
H(\Delta \xi) 
=  \bar{L} \sigma_y H(\Delta \varphi) ,
\end{aligned}
\end{equation}
which have a linear relation.

Recall that by the first inequality in \eqref{ineq:moment_x_estimate}, we can show that
\begin{equation}\label{bound_mathcal_X}
\begin{aligned}
\sup_{i_0 \leq i \leq N} \E\left[ \left|X_i^2\right|^2\right]  
& \leq \left( |x|^2 + \left( (B_1+D_3)+(A_2+D_2) H(\varphi^2) \right) T\right) \left[ e^{\left[ (A_1+D_1) + (A_2+D_2) G(\varphi^2)\right] T} \vee 1 \right] \\
& \coloneqq \mathcal{X},
\end{aligned}
\end{equation}
which gives us an upper bound for the second moment. 

Substituting $\mathcal{X}$ back to \eqref{ineq:Xsecondmoment} gives us
\begin{equation}
\begin{aligned}
\E\left[ \left|\Delta X_{i+1} \right|^2\right]
\leq & \left[1 + (A_1+1) h + (1+\lambda_1) A_2 h L (\varphi^1 ) + (1+\lambda_0) (b_z + b_z h) h L (\xi^1) \right]  \E \left[  \left| \Delta X_{i} \right|^2 \right]  \\
& + \left(1+\lambda_1^{-1}\right) A_2 h \left( G(\Delta \varphi)\mathcal{X} + H(\Delta \varphi) \right) \\
& + \left(1+\lambda_0^{-1}\right) (b_z + b_z h)h  \left( G(\Delta \xi)\mathcal{X} + H(\Delta \xi) \right).
\end{aligned}
\end{equation}
Iterating the above inequality and noticing that $\Delta X_{i_0}=0$, we get
\begin{equation}\label{upperbound_DeltaX_square}
\begin{aligned}
\sup_{i_0 \leq i \leq N} \E\left[ \left|\Delta X_i\right|^2\right] 
\leq & \left( \left(1+\lambda_1^{-1}\right) A_2 h \left[G(\Delta \varphi) \mathcal{X} + H(\Delta \varphi) \right]  +  \left(1+\lambda_0^{-1}\right) (b_z + b_z h)h  \left[ G(\Delta \xi)\mathcal{X} + H(\Delta \xi) \right]  \right)  \\
& \times  \sum_{i=i_0}^{n-1} \left[1 + (A_1+1) h + (1+\lambda_1) A_2 h L (\varphi^1 ) + (1+\lambda_0) (b_z + b_z h) h L (\xi^1) \right]^{i-i_0} \\
= & \left( \left(1+\lambda_1^{-1}\right) A_2[G(\Delta \varphi) \mathcal{X} +H(\Delta \varphi)] + \left(1+\lambda_0^{-1}\right) (b_z + b_z h) \left[ G(\Delta \xi)\mathcal{X} + H(\Delta \xi) \right]  \right) \\[1.0ex]
& \times \Gamma_0^{n-i_0}\left( A_1+1 +\left(1+\lambda_1\right) A_2 L(\varphi^1) +(1+\lambda_0) (b_z + b_z h) L(\xi^1) \right).
\end{aligned}
\end{equation}
Next, we deal with the $Y$ process. We obtain from Lemma \ref{lem:estimate_two_Y} and the condition $A_3\leq 1$,
\begin{equation}
\E\left[ \left|\Delta Y_i\right|^2\right]  
\leq  \left[1+A_4 h\right] \E\left[ \left|\Delta Y_{i+1}\right|^2\right] + A_5 h \E\left[ \left|\Delta X_i\right|^2\right].
\end{equation}
Then, by applying \eqref{upperbound_DeltaX_square} to the third inequality and \eqref{bounds_gamma_functions} to the last inequality below, we find,
\begin{equation}
\begin{aligned}
\left|\Delta \Phi_{i_0}(x)\right|^2 
= & \left|\Delta Y_{i_0}\right|^2 \\
\leq & \left(1+A_4 h\right)^{N-i_0} \E\left[ \left|\Delta Y_n\right|^2\right] + A_5 \Gamma_0^{N-i_0}\left(A_4\right) \sup _{i_0 \leq i \leq N} \E\left[ \left|\Delta X_i\right|^2\right]  \\
\leq & \left[\left(1+A_4 h\right)^{N-i_0} g_x+A_5 \Gamma_0^{N-i_0}\left(A_4\right)\right] \sup _{i_0 \leq i \leq N} \E\left[ \left|\Delta X_i\right|^2\right]   \\
\leq & \left[\left(1+A_4 h\right)^{N-i_0} g_x+A_5 \Gamma_0^{N-i_0}\left(A_4\right)\right]  \left( \left(1+\lambda_1^{-1}\right) A_2[G(\Delta \varphi) \mathcal{X} +H(\Delta \varphi)] \right. \\
& + \left. \left(1+\lambda_0^{-1}\right) (b_z + b_z h) \left[ G(\Delta \xi)\mathcal{X} + H(\Delta \xi) \right]  \right) \\
& \times \Gamma_0^{N-i_0}\left( A_1+1 +\left(1+\lambda_1\right) A_2 L(\varphi^1) +(1+\lambda_0) (b_z + b_z h) L(\xi^1) \right) \\
\leq & \left( \left(1+\lambda_1^{-1}\right) A_2[G(\Delta \varphi) \mathcal{X} +H(\Delta \varphi)] +  \left(1+\lambda_0^{-1}\right) (b_z + b_z h) \left[ G(\Delta \xi)\mathcal{X} + H(\Delta \xi) \right]  \right)  \\
& \times \left[\left(1+A_4 h\right)^{N-i_0} g_x  \Gamma_0^{N-i_0}\left( A_1+1 +\left(1+\lambda_1\right) A_2 L(\varphi^1) +(1+\lambda_0) (b_z + b_z h) L(\xi^1) \right) \right. \\
& + \left. A_5 \Gamma_0^{N-i_0}\left(A_4\right) \Gamma_0^{N-i_0}\left( A_1+1 +\left(1+\lambda_1\right) A_2 L(\varphi^1) +(1+\lambda_0) (b_z + b_z h) L(\xi^1) \right)  \right]   \\
\leq & \left( \left(1+\lambda_1^{-1}\right) A_2[G(\Delta \varphi) \mathcal{X} +H(\Delta \varphi)] +  \left(1+\lambda_0^{-1}\right) (b_z + b_z h) \left[ G(\Delta \xi)\mathcal{X} + H(\Delta \xi) \right]  \right)  \\
& \times \left[  (1+A_4 h)^{N-i_0} g_x  \Gamma_1^N( A_4, A_1+1 +(1+\lambda_1) A_2 L(\varphi^1) + (1+\lambda_0) (b_z+b_zh) L(\xi^1) ) \right. \\
& + \left. A_5 \Gamma_0^{N-i_0}\left(A_4\right) \Gamma_0^{N-i_0}\left( A_1+1 +\left(1+\lambda_1\right) A_2 L(\varphi^1) +(1+\lambda_0) (b_z + b_z h) L(\xi^1) \right)  \right].   
\end{aligned}
\end{equation}
To obtain the desired result, we set $\lambda_0=\lambda_1$ and recall \eqref{ineq:liprelation} and \eqref{linearrelation_GH},  
\begin{equation}
\begin{aligned}
\left|\Delta \Phi_{i_0}(x)\right|^2 
\leq & \left( (1+\lambda_1^{-1}) (A_2 + (b_z + b_zh) \bar{L} \sigma_y  ) [G(\Delta \varphi) \mathcal{X} +H(\Delta \varphi)]  \right)  \\
& \times \left[  (1+A_4 h)^{n-i_0} g_x  \Gamma_1^n( A_4, A_1+1 +(1+\lambda_1) (A_2 + (b_z + b_z h)(2\sigma_x+2\sigma_y+2\Sigma) ) L(\varphi^1)   ) \right. \\
& + \left. A_5 \Gamma_0^{n-i_0}\left(A_4\right) \Gamma_0^{n-i_0} ( A_1+1 + (1+\lambda_1) (A_2 + (b_z + b_z h)(2\sigma_x+2\sigma_y+2\Sigma) )  L(\varphi^1)  \right] .
\end{aligned}
\end{equation}
Taking the supremum over $0\leq i_0\leq N$ for both sides, and recalling the definitions \eqref{bound_mathcal_X} and \eqref{bounds_gamma_functions}, we can define the $c_2(\cdot, \cdot, \cdot, \cdot)$ function as stated in \eqref{def:c2functions} and then obtain the desired result \eqref{growth_of_difference_1}.

Finally, we prove the second result \eqref{growth_of_difference_2} by treating it as a byproduct of \eqref{growth_of_difference_1}. Applying the same argument as in \eqref{relation:delta_GH} for $\Delta \Phi$ and $\Delta \psi$, we obtain 
\begin{equation}
G(\Delta \psi ) \leq \bar{L} \sigma_y G(\Delta \Phi)   ,\quad
H(\Delta \psi ) \leq \bar{L} \sigma_y H(\Delta \Phi) , 
\end{equation}
and therefore multiplying $\bar{L} \sigma_y$ to both inequalities, \eqref{growth_of_difference_1} and using \eqref{relation:delta_GH}, for the right-hand sides of the inequalities, we complete the proof.
\end{proof}

\begin{theorem}\label{thm:growth_coeff_in_m}
Assume that $h$ is sufficiently small,  $L\left(u^{\pi, m}\right) \leq \bar{L}, G\left(u^{\pi, m}\right) \leq \bar{G}$,  $H\left(u^{\pi, m}\right) \leq \bar{H}$, $G\left(v^{\pi, m}\right) \leq \tilde{G}$ and $H\left(v^{\pi, m}\right) \leq \tilde{H}$, for all $m \in \mathbb{N}$. If
\begin{equation}
c_2(\bar{L}, \bar{G})<1,
\end{equation}
then, for any constant $c_2$ satisfying $c_2(\bar{L}, \bar{G})<c_2<1$, we have
\begin{equation}
\begin{aligned}
& G\left(u^{\pi, m+1}-u^{\pi, m}\right) \leq \bar{G} c_2^m,  \\
& H\left(u^{\pi, m+1}-u^{\pi, m}\right) \leq\left[\bar{H}+ \left[  (\bar{B}_1+\bar{D}_3)+(\bar{A}_2+\bar{D}_2) \bar{H} )  \right] T \bar{G} m\right] c_2^m,
\end{aligned}
\end{equation}
and
\begin{equation}
\begin{aligned}
& G\left(v^{\pi, m+1}-v^{\pi, m}\right) \leq \tilde{G} c_2^m,  \\
& H\left(v^{\pi, m+1}-v^{\pi, m}\right) \leq \left[ \tilde{H}+ \left[ ( (\bar{B}_1+\bar{D}_3)+(\bar{A}_2+\bar{D}_2) \bar{H} )  \right] T \tilde{G} m  \right] c_2^m.
\end{aligned}
\end{equation}
\end{theorem}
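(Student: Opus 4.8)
The plan is to convert the one-step estimates of Theorem~\ref{thm:growth_of_difference} into a recursion in the iteration index $m$ and then resolve that recursion in closed form. The key observation is that $F_y^\pi(u^{\pi,m}) = u^{\pi,m+1}$ and $F_z^\pi(v^{\pi,m}) = v^{\pi,m+1}$, so that choosing $\varphi^1 = u^{\pi,m}$, $\varphi^2 = u^{\pi,m-1}$ and $\xi^1 = v^{\pi,m}$, $\xi^2 = v^{\pi,m-1}$ in \eqref{growth_of_difference_1}--\eqref{growth_of_difference_2} is legitimate: the hypotheses hold because Theorems~\ref{estimate_uniform_lipschitz} and~\ref{estimate_uniform_growth} guarantee that $u^{\pi,m}, v^{\pi,m}$ are uniformly Lipschitz and of linear growth. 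This substitution yields
\begin{equation}
G\left(u^{\pi,m+1}-u^{\pi,m}\right) \leq c_2\left(\lambda_1, h, L(u^{\pi,m}), G(u^{\pi,m-1})\right) G\left(u^{\pi,m}-u^{\pi,m-1}\right),
\end{equation}
together with the analogous inhomogeneous inequality for $H$ and the matching pair for $v$.

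First I would eliminate the $m$-dependence of the contraction coefficient. Using the uniform bounds $L(u^{\pi,m}) \leq \bar{L}$ and $G(u^{\pi,m-1}) \leq \bar{G}$ together with the monotonicity of $c_2(\lambda_1,h,\cdot,\cdot)$ in its last two arguments --- evident from \eqref{def:c2functions}, since the factors $\Gamma_0^N$, $\Gamma_1^N$ and the exponential prefactor are all increasing in $L$ and $G$ --- the coefficient is dominated by $c_2(\lambda_1,h,\bar{L},\bar{G})$. Because $c_2(\bar{L},\bar{G}) = \inf_{\lambda_1}\lim_{h\downarrow 0} c_2(\lambda_1,\bar{L},\bar{G}) < c_2$, I can fix one $\lambda_1^{\ast}$ whose limiting value lies strictly below $c_2$, and then for all sufficiently small $h$ obtain $c_2(\lambda_1^{\ast},h,\bar{L},\bar{G}) \leq c_2$. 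This collapses the estimates to the linear recursion
\begin{equation}
G\left(u^{\pi,m+1}-u^{\pi,m}\right) \leq c_2\,G\left(u^{\pi,m}-u^{\pi,m-1}\right),
\end{equation}
\begin{equation}
H\left(u^{\pi,m+1}-u^{\pi,m}\right) \leq c_2\,H\left(u^{\pi,m}-u^{\pi,m-1}\right) + \kappa\bar{G}\,c_2^{\,m},
\end{equation}
where $\kappa \coloneqq \left[(\bar{B}_1+\bar{D}_3)+(\bar{A}_2+\bar{D}_2)\bar{H}\right]T$ and I have used $H(u^{\pi,m-1}) \leq \bar{H}$ together with the already-derived geometric bound on $G(u^{\pi,m}-u^{\pi,m-1})$ to produce the inhomogeneous term.

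For the base case I would take the natural initialisation $u^{\pi,0} \equiv 0$, which gives $v^{\pi,0} \equiv 0$ and hence $G(u^{\pi,1}-u^{\pi,0}) = G(u^{\pi,1}) \leq \bar{G}$ and $H(u^{\pi,1}-u^{\pi,0}) \leq \bar{H}$. Iterating the homogeneous recursion then gives $G(u^{\pi,m+1}-u^{\pi,m}) \leq \bar{G}\,c_2^{\,m}$ directly. Substituting this into the inhomogeneous recursion produces $a_m \leq c_2\, a_{m-1} + \kappa\bar{G}\,c_2^{\,m}$ with $a_m \coloneqq H(u^{\pi,m+1}-u^{\pi,m})$, and an elementary induction yields $a_m \leq (\bar{H}+\kappa\bar{G}\,m)\,c_2^{\,m}$ --- the linear-in-$m$ factor being precisely the accumulation of the constant inhomogeneity over the geometric recursion. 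The corresponding statements for $v$ follow verbatim from \eqref{growth_of_difference_2}, with $\bar{G},\bar{H}$ replaced by $\tilde{G},\tilde{H}$ in the prefactors.

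The main obstacle is the coefficient-control step rather than any single computation: one must confirm that $c_2$ really is monotone in $(L,G)$ so that the uniform bounds may be inserted, and then verify that the successive limits $\lim_{h\downarrow 0}$ and $\inf_{\lambda_1}$ genuinely produce a contraction factor below the prescribed $c_2 < 1$, uniformly in $m$, for all sufficiently small $h$. Once this is secured, everything downstream is a deterministic linear recursion whose resolution is routine.
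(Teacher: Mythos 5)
Your proposal is correct and follows essentially the same route as the paper: apply Theorem \ref{thm:growth_of_difference} to consecutive iterates, use the uniform bounds together with the limits $\lim_{h\downarrow 0}c_2(\lambda_1,h,\bar{L},\bar{G})=c_2(\lambda_1,\bar{L},\bar{G})$ and $\inf_{\lambda_1}$ to fix a $\lambda_1$ and small $h$ with contraction factor at most $c_2$, start from $u^{\pi,0}=v^{\pi,0}=0$, and resolve the resulting homogeneous and inhomogeneous geometric recursions. The only cosmetic difference is that the paper also explicitly imposes the companion condition $c_2(\lambda_1,h,\bar{L},\bar{G})\left[(B_1+D_3)+(A_2+D_2)\bar{H}\right]\leq c_2\left[(\bar{B}_1+\bar{D}_3)+(\bar{A}_2+\bar{D}_2)\bar{H}\right]$ to replace the $h$-dependent constants in the inhomogeneous term by their limits, a step you gloss over but which follows from the same limiting argument you already invoke.
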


\begin{proof}
Recall the choices for $\lambda_2, \lambda_3$, as in \eqref{choice_lambda2_lambda3}, and note that with these choices,
$$
\lim_{h \to 0} \Gamma_1^n(x, y)= T \Gamma_1(x T, y T), \quad 
\lim_{h \to 0} c_2\left(\lambda_1, h, L, G\right) = c_2\left(\lambda_1, L, G\right).
$$

Recalling the definitions \eqref{def:c2functions}, we may find an appropriate $\lambda_1>0$ and sufficiently small $h>0$, such that
\begin{equation}
\begin{aligned}
c_2\left(\lambda_1, h, \bar{L}, \bar{G}\right) 
& \leq c_2, \\
c_2\left(\lambda_1, h, \bar{L}, \bar{G}\right)\left[ (B_1+D_3)+(A_2+D_2) \bar{H}  \right] 
& \leq c_2 \left[  (\bar{B}_1+\bar{D}_3)+(\bar{A}_2+\bar{D}_2) \bar{H} \right],
\end{aligned}
\end{equation}
since we have the condition $c_2(\bar{L}, \bar{G})<1$.

Let $\Delta u^{\pi, m} \coloneqq u^{\pi, m}-u^{\pi, m-1}$. Applying Theorem \ref{thm:growth_of_difference}, yields,
\begin{equation}\label{growth_delta_u}
\begin{aligned}
&G\left(\Delta u^{\pi, m+1}\right) \leq c_2 G\left(\Delta u^{\pi, m}\right), \\
&H\left(\Delta u^{\pi, m+1}\right) \leq c_2 H\left(\Delta u^{\pi, m}\right)+  c_2 \left( (\bar{B}_1+\bar{D}_3)+(\bar{A}_2+\bar{D}_2)  \bar{H} \right) T G\left(\Delta u^{\pi, m}\right).
\end{aligned}
\end{equation}
By construction, $u^{\pi, 0} = 0$, and thus
\begin{equation}
G\left(\Delta u^{\pi, 1}\right)=G\left(u^{\pi, 1}\right) \leq \bar{G} ,\quad
H\left(\Delta u^{\pi, 1}\right)=H\left(u^{\pi, 1}\right) \leq \bar{H}.
\end{equation}
The first part of the proof can be completed by iterating the above inequalities and the fact that $c_2<1$.

Define $\Delta v^{\pi, m}$ in a similar way, and notice that the coefficients of $G(\Delta v^{\pi, m} )$  and $H(\Delta v^{\pi, m} )$ are the same as \eqref{growth_delta_u} when applying Theorem \ref{thm:growth_of_difference}. Then, due to $v^{\pi, 0} = 0$ and uniform bounds, we find
\begin{equation}
G\left(\Delta v^{\pi, 1}\right) = G\left(v^{\pi, 1}\right) \leq \tilde{G} ,\quad
H\left(\Delta v^{\pi, 1}\right) = H\left(v^{\pi, 1}\right) \leq \tilde{H}.
\end{equation}
We can iterate the inequalities to obtain the desired results.

\end{proof}

We now introduce the notion of a solution to the discretized scheme~\eqref{full_scheme}. The decoupling fields $u^\pi$ and $v^\pi$ are said to be a solution of~\eqref{full_scheme} if they arise as the limit of the iteration, that is,
$u^{\pi,m}\to u^\pi$, $v^{\pi,m}\to v^\pi$, as $m\to\infty$, and $(u^\pi,v^\pi)$ satisfies~\eqref{full_scheme}, namely, \eqref{full_scheme} is obtained by replacing $u^{\pi,m}$ and $u^{\pi,m+1}$ with $u^\pi$, and $v^{\pi,m}$ and $v^{\pi,m+1}$ with $v^\pi$. A natural way to show the existence of a solution is to show that $u^{\pi, m}$ and $v^{\pi, m}$ are Cauchy sequences indexed by $m$. With the established notation and results, we derive such convergence results as follows.

\begin{theorem}\label{thm:existence_uniqueness}
Assume \eqref{condition:L0} and
\begin{equation}\label{condition:c2_func}
c_2\left(L_1, L_1\right)<1,
\end{equation}
hold true. We then have the following statements:
\begin{enumerate}[label=(\arabic*).]
\item For any $\bar{L}>L_1, \bar{G}>L_1, L_2>L_2\left(L_1\right), c_1\left(L_1\right)<c_1<1$, and sufficiently small $h$, there exists a solution $(u^{\pi}, v^{\pi})$ to \eqref{full_scheme}, such that
\begin{equation}
L\left(u^{\pi}\right) \leq \bar{L}, \quad G\left(u^{\pi}\right) \leq \bar{G}, \quad H\left(u^{\pi}\right) \leq \bar{H} \coloneqq \frac{L_2}{1-c_1},
\end{equation}
and,
\begin{equation}
L ( v^{\pi} ) \leq (2\sigma_x+2\sigma_y+2\Sigma) \bar{L} ,\quad
G ( v^{\pi} ) \leq \bar{L} (\sigma_x + \sigma_y \bar{G}), \quad 
H ( v^{\pi} ) \leq \bar{L} (\sigma_0 + \sigma_y \frac{L_2}{1-c_1} . 
\end{equation}

\item For any $c_2\left(L_1, L_1\right)<c_2<1$ and for $h$ small enough, we have
\begin{equation}
\begin{aligned}
\max_{0 \leq i \leq N}  \left|u_i^{\pi, m}(x) - u_i^\pi(x)\right|^2 
\leq  3 \left( \bar{G}|x|^2 + \bar{H} +  \left(   \left( (\bar{B}_1+\bar{D}_3) +(\bar{A}_2+\bar{D}_2) \bar{H} \right)  T \bar{G}  \right) m \right) \frac{c_2^m}{ (1-\sqrt{c_2})^4},
\end{aligned}
\end{equation}
and,
\begin{equation}
\begin{aligned}
\max_{0 \leq i \leq N}  \left|v_i^{\pi, m}(x) - v_i^\pi(x)\right|^2 
\leq  3 \left( \tilde{G}|x|^2 + \tilde{H} +  \left(   \left( (\bar{B}_1+\bar{D}_3) +(\bar{A}_2+\bar{D}_2) \bar{H} \right)  T \tilde{G}  \right) m \right) \frac{c_2^m}{ (1-\sqrt{c_2})^4}. 
\end{aligned}
\end{equation}

\item We fix $G>0$ and suppose $(\tilde{u}^{\pi}, \tilde{v}^{\pi})$ is another solution to \eqref{full_scheme} with linear growth, such that $G\left(\tilde{u}^\pi\right) \leq G$. Then, $(\tilde{u}^{\pi}, \tilde{v}^{\pi}) = (u^{\pi}, v^{\pi})$, if $h$ (depending on $G$) is sufficiently small.
\end{enumerate}
\end{theorem}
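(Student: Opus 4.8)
The plan is to construct $(u^\pi,v^\pi)$ as the pointwise limit of the iterates $(u^{\pi,m},v^{\pi,m})$ and to read the three claims as, respectively, a completeness statement, a telescoping estimate, and a contraction-uniqueness statement, all resting on the one-step bounds of Theorems~\ref{thm:growth_of_difference} and~\ref{thm:growth_coeff_in_m}. I keep $\bar L,\bar G$ and $\bar H\coloneqq L_2/(1-c_1)$ for the uniform bounds from Theorems~\ref{estimate_uniform_lipschitz} and~\ref{estimate_uniform_growth}, fix a constant $c_2$ with $c_2(L_1,L_1)<c_2<1$, and abbreviate $\kappa\coloneqq[(\bar B_1+\bar D_3)+(\bar A_2+\bar D_2)\bar H]\,T\bar G$. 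Theorem~\ref{thm:growth_coeff_in_m} then gives, uniformly in $i$,
\begin{equation}
\abs{u_i^{\pi,k+1}(x)-u_i^{\pi,k}(x)}^2\le\bigl(\bar G\abs{x}^2+\bar H+\kappa k\bigr)c_2^{\,k},
\end{equation}
and the same inequality with $(\bar G,\bar H)$ replaced by $(\tilde G,\tilde H)$ for the $v$-iterates.

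First I would run existence and the rate in statement (2) together. For fixed $i$ and $x$, the pointwise triangle inequality gives, for $p>m\ge 1$,
\begin{equation}
\abs{u_i^{\pi,p}(x)-u_i^{\pi,m}(x)}\le\sum_{k=m}^{p-1}\sqrt{\bar G\abs{x}^2+\bar H+\kappa k}\,\bigl(\sqrt{c_2}\bigr)^{k}.
\end{equation}
Since $\sqrt{k}\,(\sqrt{c_2})^{k}$ is summable, the tail tends to $0$, so $\{u_i^{\pi,m}(x)\}_m$ is Cauchy and converges to some $u_i^\pi(x)$; the uniform bounds $L(u^\pi)\le\bar L$, $G(u^\pi)\le\bar G$, $H(u^\pi)\le\bar H$ pass to the limit. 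For the explicit rate I would apply $\sqrt{a+b+c}\le\sqrt a+\sqrt b+\sqrt c$ together with the elementary estimates $\sum_{k\ge m}(\sqrt{c_2})^k=(\sqrt{c_2})^m/(1-\sqrt{c_2})$ and $\sum_{k\ge m}\sqrt{k}\,(\sqrt{c_2})^k\le\sqrt m\,(\sqrt{c_2})^m/(1-\sqrt{c_2})^2$, so that letting $p\to\infty$,
\begin{equation}
\abs{u_i^{\pi,m}(x)-u_i^\pi(x)}\le\bigl(\sqrt{\bar G}\,\abs{x}+\sqrt{\bar H}+\sqrt{\kappa m}\bigr)\frac{(\sqrt{c_2})^m}{(1-\sqrt{c_2})^2}.
\end{equation}
Squaring and using $(A+B+C)^2\le 3(A^2+B^2+C^2)$ yields exactly the bound of statement (2); the identical computation with $(\tilde G,\tilde H)$ gives the $v$-estimate.

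The rate just obtained closes statement (1). It shows $G(u^{\pi,m}-u^\pi)\to0$ and $H(u^{\pi,m}-u^\pi)\to0$, so applying the contraction bound of Theorem~\ref{thm:growth_of_difference} with $\varphi^1=u^\pi$ (Lipschitz, since $L(u^\pi)\le\bar L$, and with $\xi^1=v^\pi$ Lipschitz by Remark~\ref{remark:lipconstants}) and $\varphi^2=u^{\pi,m}$ gives $G(F_y^\pi(u^\pi)-u^{\pi,m+1})\le c_2\,G(u^\pi-u^{\pi,m})\to0$ and the analogous $H$-bound. As $u^{\pi,m+1}\to u^\pi$ in the same seminorms, this forces $F_y^\pi(u^\pi)=u^\pi$, i.e. $u^\pi$ solves~\eqref{full_scheme}. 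The field $v^\pi$ is obtained identically from the $v$-iterates (equivalently by setting $v^\pi=\partial_x u^\pi\,\sigma(\cdot,\cdot,u^\pi)$), and its bounds $L(v^\pi),G(v^\pi),H(v^\pi)$ follow term by term from those of $u^\pi$ through Remarks~\ref{remark:lipconstants} and~\ref{remark:growthcoeff}.

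For uniqueness in (3), both $u^\pi$ and $\tilde u^\pi$ are fixed points of $F_y^\pi$, with $u^\pi$ Lipschitz and $G(\tilde u^\pi)\le G$. Taking $\varphi^1=u^\pi$, $\varphi^2=\tilde u^\pi$ in Theorem~\ref{thm:growth_of_difference} produces the self-referential bound $G(u^\pi-\tilde u^\pi)\le c_2(\lambda_1,h,\bar L,G)\,G(u^\pi-\tilde u^\pi)$; choosing $\lambda_1$ near the infimum defining $c_2(\bar L,G)$ and then $h$ small drives the factor below $1$, so that $G(u^\pi-\tilde u^\pi)=0$, after which the $H$-inequality of the same theorem forces $H(u^\pi-\tilde u^\pi)=0$ and hence $u^\pi=\tilde u^\pi$ pointwise; then $\tilde v^\pi=\partial_x\tilde u^\pi\,\sigma=v^\pi$. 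I expect this last step to be the main obstacle: unlike (1)--(2), where the contraction constant is pinned to the fixed $\bar G$ via Theorem~\ref{thm:growth_coeff_in_m}, uniqueness must keep the discrete factor $c_2(\lambda_1,h,\bar L,G)$ strictly below $1$ over the entire prescribed growth class $\{G(\cdot)\le G\}$, so it is precisely here that one must exploit the $h\downarrow 0$ continuity of $c_2$ and the requirement $c_2(\bar L,G)<1$ secured by the generalized weak-and-monotonicity condition; the geometric-series bookkeeping behind (2), by contrast, is routine.
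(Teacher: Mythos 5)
Your treatment of statements (1) and (2) is essentially the paper's own argument: Theorem~\ref{thm:growth_coeff_in_m} gives the increment bound, the square root and the geometric/arithmetico-geometric sums $\sum_{k\ge m}(\sqrt{c_2})^k$ and $\sum_{k\ge m}\sqrt{k}(\sqrt{c_2})^k$ produce the Cauchy property and the $(1-\sqrt{c_2})^{-2}$ factor, and squaring with $(A+B+C)^2\le 3(A^2+B^2+C^2)$ yields the stated rate; the uniform bounds in (1) then pass to the limit exactly as in the paper (your explicit verification that the limit is a fixed point of $F_y^\pi$ is slightly more detailed than the paper's one-line appeal to (2), but it is the same route).

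Statement (3) is where you have a genuine gap. Your global contraction argument needs $c_2(\lambda_1,h,\bar L,G)<1$, and you justify this by saying that $c_2(\bar L,G)<1$ is ``secured by the generalized weak-and-monotonicity condition.'' It is not: the hypothesis of the theorem is only $c_2(L_1,L_1)<1$, while $G>0$ in statement (3) is an \emph{arbitrary} prescribed growth bound, possibly much larger than $L_1$. Since $c_2(\lambda_1,h,\bar L,G)\to c_2(\lambda_1,\bar L,G)$ as $h\downarrow 0$ and $\inf_{\lambda_1}c_2(\lambda_1,\bar L,G)=c_2(\bar L,G)$, which is increasing in $G$ and can exceed $1$, merely optimizing $\lambda_1$ and shrinking $h$ on the whole interval $[0,T]$ cannot drive the factor below $1$. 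This is precisely why the theorem says ``$h$ (depending on $G$) sufficiently small'' and why the paper instead applies a \emph{local} version of Theorem~\ref{thm:growth_of_difference} on subintervals with known terminal data: on a time window of length $\delta$ the analogues of $\Gamma_0^N$ and $\Gamma_1^N$ are taken over $O(\delta/h)$ steps, so the local contraction constant can be made smaller than $1$ for any fixed $G$ by choosing $\delta$ (hence $h$) small, and uniqueness is then propagated backward window by window. Without this localization your self-referential inequality $G(u^\pi-\tilde u^\pi)\le c_2(\lambda_1,h,\bar L,G)\,G(u^\pi-\tilde u^\pi)$ does not yield $G(u^\pi-\tilde u^\pi)=0$.
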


\begin{proof}
For Statement (1) above, let us first assume that $u^{\pi}$ exists. Then, we see that $L\left(u^{\pi}\right) \leq \bar{L}$, since we require $L_0<e^{-1}$, and, therefore, Theorem \ref{estimate_uniform_lipschitz} holds.  We can check that $c_1(L_1)<c_2(L_1, L_1)<1$ and therefore Theorem \ref{estimate_uniform_growth} can be applied, which gives
\begin{equation}
G\left(u^{\pi}\right) \leq \bar{G}, \quad H\left(u^{\pi}\right) \leq \bar{H} \coloneqq \frac{L_2}{1-c_1},
\end{equation}
\begin{equation}
L ( v^{\pi} ) \leq (2\sigma_x+2\sigma_y+2\Sigma) \bar{L} ,\quad
G ( v^{\pi} ) \leq \bar{L} (\sigma_x + \sigma_y \bar{G}) \coloneqq \tilde{G}, \quad 
H ( v^{\pi} ) \leq  \bar{L} (\sigma_0 + \sigma_y \frac{L_2}{1-c_1} ) \coloneqq \tilde{H}. 
\end{equation}
The remaining part in Statement (1) is to show the existence of $u^{\pi}$, which is a consequence of the result in Statement (2).

To prove Statement (2), let us first prove the result for $u_i^{\pi, m}$. We define the following, to ease the notation,
\begin{equation}
\bar{K} \coloneqq \left( (\bar{B}_1+\bar{D}_3) +(\bar{A}_2+\bar{D}_2) \bar{H} \right) T \bar{G}.
\end{equation}
Applying Theorem \ref{thm:growth_coeff_in_m} for $u_i^{\pi, m}$, we get
\begin{equation}
\left| u_i^{\pi, m+1}(x)-u_i^{\pi, m}(x) \right|^2 
\leq  \left(  \bar{G}|x|^2+\bar{H} + \bar{K} m  \right) c_2^m.
\end{equation}
Taking the square root at both sides and noticing that all terms are non-negative, we can derive
\begin{equation}
\left|u_i^{\pi, m+1}(x)-u_i^{\pi, m}(x)\right| 
\leq
\left( \sqrt{\bar{G}}|x|+\sqrt{\bar{H}}+\sqrt{\bar{K} m } \right) c_2^{m/2}.
\end{equation}
Thus, for any integer $m_1>m$,
\begin{equation}
\begin{aligned}
\left|u_i^{\pi, m}(x)-u_i^{\pi, m_1}(x)\right| 
\leq & \sum_{j=m}^{\infty} \left( \sqrt{\bar{G}}|x| + \sqrt{\bar{H}} + \sqrt{\frac{\bar{K}}{m}} j\right) c_2^{j/2} \\
\leq & \left( \sqrt{\bar{G}}|x|+ \sqrt{\bar{H}} \right) \frac{c_2^{m/2}}{1-\sqrt{c_2}} + \sqrt{\frac{\bar{K}}{m}} \frac{m\left(1-\sqrt{c_2}\right)+\sqrt{c_2}}{\left(1-\sqrt{c_2}\right)^2} c_2^{m/2}  \\
= &  \left(  \left( \sqrt{\bar{G}}|x|+ \sqrt{\bar{H}} \right) (1-\sqrt{c_2}) + \sqrt{ \bar{K} m }  ( \left(1-\sqrt{c_2}\right)+\sqrt{c_2}/ m) \right)  \frac{c_2^{m/2}}{\left(1-\sqrt{c_2}\right)^2}  \\
\leq & \left( \sqrt{\bar{G}}|x|+ \sqrt{\bar{H}} + \sqrt{ \bar{K} m }  \right)   \frac{c_2^{m/2}}{\left(1-\sqrt{c_2}\right)^2}.
\end{aligned}
\end{equation}
Note that the right-hand side above converges to 0, as $m \rightarrow \infty$. Then, $u_i^{\pi, m}(x)$ is a Cauchy sequence and hence converges to some $u_i^\pi(x)$. Moreover, taking the square leads to
\begin{equation}
\left|u_i^{\pi, m}(x)-u_i^\pi(x)\right|^2 
\leq 3 \left( \bar{G}|x|^2+\bar{H} + \bar{K}m \right) \frac{c_2^m}{\left(1-\sqrt{c_2}\right)^4}.
\end{equation}
To derive the result for $v_i^{\pi, m}$, we can use the same methodology as for $u_i^{\pi, m}$, by using the corresponding constants $ \tilde{G}$ and $\tilde{H}$, and defining the corresponding $ \tilde{K} \coloneqq \left( (\bar{B}_1+\bar{D}_3) +(\bar{A}_2+\bar{D}_2) \bar{H} \right) T \tilde{G} $. This yields,
\begin{equation}
\left|v_i^{\pi, m}(x)-v_i^\pi(x)\right|^2 
\leq 3 \left[  \tilde{G}|x|^2+\tilde{H}+\tilde{K} m  \right]  \frac{c_2^m}{\left(1-\sqrt{c_2}\right)^4}.
\end{equation}
For Statement (3), we can prove the uniqueness of $(\tilde{u}^{\pi}, \tilde{v}^{\pi})$ using the same approach as in \cite{bender2008time}, i.e. by applying a local version of Theorem \ref{thm:growth_of_difference} on each sub interval with known terminal condition each time, and iterate the growth coefficients to conclude the equality.
\end{proof}

\subsection{Convergence in time steps}\label{sec35}

We study the error due to the time discretization in this subsection. To this end, let us recall that a decoupled FBSDE is obtained by using the decoupling fields $u$ and $v$, i.e. we consider
\begin{equation}\label{decoupled_continuous_formulation}
\left\{
\begin{aligned}
& X_t = x_0 + \int_0^t b\left(s, X_s, u(s, X_s), v(s, X_s)  \right) \mathrm{d} s + \int_0^t \sigma\left(s, X_s, u\left(s, X_s\right)\right) \mathrm{d}  W_s, \\
& Y_t = g\left(X_T\right)+\int_t^T f\left(s, X_s, Y_s, Z_s\right) \mathrm{d} s - \int_t^T Z_s \mathrm{d} W_s,
\end{aligned}\right.
\end{equation} 
and its corresponding discretized version,
\begin{equation}\label{decoupled_discretized_formulation}
\left\{
\begin{aligned}
& \tilde{X}_0^\pi \coloneqq x_0, \\
& \tilde{X}_{i+1}^\pi \coloneqq \tilde{X}_i^\pi + b(t_i, \tilde{X}_i^\pi, u(t_i, \tilde{X}_i^\pi), v(t_i, \tilde{X}_i^\pi) ) h + \sigma(t_i, \tilde{X}_i^\pi, u(t_i, \tilde{X}_i^\pi) ) \Delta W_{i}, \\
& \tilde{Y}_n^\pi = g ( \tilde{X}_N^\pi ), \\
& \tilde{Z}_i^\pi \coloneqq \frac{1}{h} \E_{t_i} \left[ \tilde{Y}_{i+1}^\pi \Delta W_{i} \right], \\
& \tilde{Y}_i^\pi \coloneqq \E_{t_i}  \left[  \tilde{Y}_{i+1}^\pi + f(t_i, \tilde{X}_i^\pi, \tilde{Y}_{i+1}^\pi, \tilde{Z}_i^\pi ) h \right] .
\end{aligned}
\right.
\end{equation}
To emphasize the differences, we denote by $u_i^0(x) \coloneqq u(t_i, x)$ and $v_i^0(x) \coloneqq v(t_i, x)$, and using the operators we can further define $\tilde{u}^\pi \coloneqq F_y^\pi (u^0)$ and $\tilde{v}^\pi \coloneqq F_z^\pi (u^0)$, which implies $\tilde{Y}_i^\pi = \tilde{u}_i^\pi (\tilde{X}_i^\pi )$ and $\tilde{Z}_i^\pi = \tilde{v}_i^\pi (\tilde{X}_i^\pi )$. Note that the decoupling fields, $u(t_i, \cdot)$ and $v(t_i, \cdot)$, are not necessarily identical to $\tilde{u}_i^\pi$ and $\tilde{v}_i^\pi$, respectively.   An estimate for such a difference is given in the following Theorem \ref{time_standard_results}.

Different from the setting in \cite{bender2008time}, our approach requires the classical solution to the PDE \eqref{eq:quasi-linear-pde} and, therefore, stronger regularity conditions are needed. Recall our standing assumptions stated in Subsection \ref{sec31}, the resulting decoupled FBSDE \eqref{decoupled_continuous_formulation} and its discretization \eqref{decoupled_discretized_formulation}. We collect the following standard results from \cite{zhang2017backward} \cite{zhang2004numerical}, that will be used throughout this subsection.
 
\begin{theorem}\label{time_standard_results} 
  Suppose the decoupling fields $u$ and $v$ are both uniformly Hölder-$\frac{1}{2}$ continuous in $t$, and Assumptions \ref{assume:lip}, \ref{assume:holder}, \ref{assume:bounded-sigma} and \ref{assume:pde} hold. Then,    
\begin{enumerate}[label=(\arabic*).]
\item The decoupling fields $u$ and $v$ are uniformly Lipschitz continuous functions in the spatial variable $x$, and are therefore of linear growth.
\item We have the following estimates for the decoupling fields used in the scheme \eqref{decoupled_discretized_formulation}
\begin{equation}
\left|\tilde{u}_i^\pi (x)- u(t_i, x)\right|^2 \leq C (1+|x|^2) h, \quad
\left|\tilde{v}_i^\pi (x)- v(t_i, x)\right|^2 \leq C (1+|x|^2) h,
\end{equation}
and a resulting error estimate is given by
\begin{equation}\label{decouple_estimate}
\begin{aligned}
\sup_{1\leq i\leq N}
\mathbb{E}\biggl[
\sup_{t\in[t_{i-1},t_i]}
\bigl(
|X_t-\tilde{X}_{i-1}^\pi|^2
+
|Y_t-\tilde{Y}_{i-1}^\pi|^2
\bigr)
\biggr]
& \\
\quad
+\sum_{i=1}^N
\mathbb{E}\biggl[
\int_{t_{i-1}}^{t_i}
|Z_t-\tilde{Z}_{i-1}^\pi|^2
\,\mathrm{d}t
\biggr]
&\leq
C(1+|x_0|^2)h.
\end{aligned}
\end{equation}

\end{enumerate}

\end{theorem}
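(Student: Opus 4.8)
The plan is to handle the two statements separately, deriving the first from the classical regularity granted by Assumption~\ref{assume:pde} and the second from a one-step consistency estimate combined with a stability-and-Gronwall argument.

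For Statement~(1) I would argue directly from the classical solution. Since $u(t,\cdot)\in C^2_b$ with bounds uniform in $t$, the spatial derivative $\partial_x u$ is uniformly bounded, which immediately yields the uniform Lipschitz continuity, and hence the linear growth, of $u$ in $x$. For $v$ I would invoke the differentiation relation $v(t,x)=\partial_x u(t,x)\,\sigma(t,x,u(t,x))$ from \eqref{eq: decouple} together with Remark~\ref{remark:lipconstants}: combining the boundedness of $\partial_x u$ and $\partial_{xx}u$ with the Lipschitz continuity of $\sigma$ in $(x,y)$ (Assumption~\ref{assume:lip}) and its boundedness (Assumption~\ref{assume:bounded-sigma}) shows that $v$ is a product of uniformly Lipschitz and bounded factors, hence uniformly Lipschitz and bounded, and therefore of linear growth.

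For the first estimate in Statement~(2) I would read $\tilde u^\pi=F_y^\pi(u^0)$ and $\tilde v^\pi=F_z^\pi(u^0)$ as a single discretized backward pass driven by the exact forward dynamics \eqref{decoupled_discretized_formulation} with the true fields $u^0,v^0$. Because $(X,Y,Z)$ also solves the decoupled continuous system \eqref{decoupled_continuous_formulation} with $Y_t=u(t,X_t)$ and $Z_t=v(t,X_t)$, the difference $\tilde u_i^\pi(x)-u(t_i,x)$ is precisely the local truncation error of replacing the conditional expectations that define $\tilde Y_i^\pi,\tilde Z_i^\pi$ by their one-step Euler analogues. I would bound this error using the Hölder-$\tfrac12$ continuity of $b,\sigma,f$ in $t$ (Assumption~\ref{assume:holder}) and of $u,v$ in $t$, the spatial Lipschitz continuity from Statement~(1), and—crucially—the $L^2$-path regularity of $Z$, namely $\E\!\int_{t_i}^{t_{i+1}}|Z_t-Z_{t_i}|^2\,\mathrm d t\le C(1+|x|^2)h^2$, which follows from $u\in C^2_b$ via Itô's formula applied to $Z_t=v(t,X_t)$. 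These ingredients give the claimed $O(h)$ bound in the squared norm for both $\tilde u_i^\pi-u(t_i,\cdot)$ and $\tilde v_i^\pi-v(t_i,\cdot)$.

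The global estimate \eqref{decouple_estimate} then follows by propagating the local errors through a stability-and-Gronwall argument. For the forward component I would estimate $\E[\sup_{t\le t_i}|X_t-\tilde X_{i-1}^\pi|^2]$ by splitting it into the continuous-time Euler error of the SDE and the error from replacing the exact fields by their discretized counterparts, controlling the latter through the one-step estimates of Statement~(2) and the Lipschitz bounds of Statement~(1); a discrete Gronwall inequality then gives $O(h)$. For the backward component I would run the analogues of Lemmas~\ref{lem:estimate_two_X} and \ref{lem:estimate_two_Y}, propagating the terminal error $g(X_T)-g(\tilde X_N^\pi)$ backward in time and absorbing the accumulated local errors into $\sum_i\E\!\int_{t_{i-1}}^{t_i}|Z_t-\tilde Z_{i-1}^\pi|^2\,\mathrm d t$, again closing with a discrete Gronwall. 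I expect the main obstacle to be the $Z$-component: in contrast to $Y$, the estimator $\tilde Z_i^\pi=h^{-1}\E_{t_i}[\tilde Y_{i+1}^\pi\Delta W_i]$ is a finite-difference/Malliavin-type quantity whose error is governed entirely by the $L^2$-path regularity of $Z$, so establishing that regularity, and its uniformity over the grid, from Assumption~\ref{assume:pde} is the delicate step on which the whole rate-$h$ estimate rests; the remaining Gronwall bookkeeping is routine and I would largely defer it to \cite{zhang2017backward,zhang2004numerical}.
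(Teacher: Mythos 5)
The paper does not prove this theorem at all: it is explicitly presented as a collection of standard results imported from \cite{zhang2017backward} and \cite{zhang2004numerical}, with no proof environment following the statement. Your sketch is a faithful outline of exactly the arguments those references use --- classical regularity of $u$ plus the relation $v=\partial_x u\,\sigma(t,\cdot,u)$ for Statement (1), and local consistency combined with Zhang's $L^2$-path-regularity of $Z$ and a discrete Gronwall argument for Statement (2) --- so it is consistent with what the paper relies on, and you correctly identify the path regularity of $Z$ as the step that determines the rate.
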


\begin{theorem}\label{thm63}
  Let Theorem \ref{time_standard_results}, conditions \eqref{condition:L0} and \eqref{condition:c2_func} hold true. Then,  
\begin{equation}
\left|u_i^\pi(x) - u(t_i, x)\right|^2 \leq C (1+|x|^2) h ,\qquad
\left|v_i^\pi(x)- v(t_i, x)\right|^2 \leq C (1+|x|^2) h.
\end{equation}
\end{theorem}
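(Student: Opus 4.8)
The plan is to control the time-discretization error by a triangle inequality routed through the one-step operator applied to the true decoupling fields. Theorem~\ref{time_standard_results} already supplies the one-step estimates $|\tilde u_i^\pi(x) - u(t_i,x)|^2 \le C(1+|x|^2)h$ and $|\tilde v_i^\pi(x) - v(t_i,x)|^2 \le C(1+|x|^2)h$, where $\tilde u^\pi = F_y^\pi(u^0)$, $\tilde v^\pi = F_z^\pi(u^0)$ and $u^0_i(\cdot) = u(t_i,\cdot)$, $v^0_i(\cdot) = v(t_i,\cdot)$. In the language of Section~\ref{sec33} this reads $G(\tilde u^\pi - u^0) \le Ch$, $H(\tilde u^\pi - u^0) \le Ch$, and likewise for $v$. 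Since $u^\pi$ is the fixed point $u^\pi = F_y^\pi(u^\pi)$ delivered by Theorem~\ref{thm:existence_uniqueness}, the residual gap $u^\pi - \tilde u^\pi = F_y^\pi(u^\pi) - F_y^\pi(u^0)$ should be absorbed by the contraction of $F_y^\pi$ established in Theorem~\ref{thm:growth_of_difference}.

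To make this quantitative I would iterate $F_y^\pi$ starting from the true field. Set $w^{(0)} \coloneqq u^0$ and $w^{(k+1)} \coloneqq F_y^\pi(w^{(k)})$, so that $w^{(1)} = \tilde u^\pi$ and, by the contraction property together with the uniqueness statement (3) of Theorem~\ref{thm:existence_uniqueness}, $w^{(k)} \to u^\pi$ as $k \to \infty$ (the field $v^{(k)}$ being slaved to $w^{(k)}$ through the differentiation relation). Writing $\delta^{(k)} \coloneqq w^{(k+1)} - w^{(k)}$ and applying Theorem~\ref{thm:growth_of_difference} along this sequence reproduces exactly the recursion of Theorem~\ref{thm:growth_coeff_in_m},
\[
G(\delta^{(k)}) \le c_2\, G(\delta^{(k-1)}), \qquad
H(\delta^{(k)}) \le c_2\, H(\delta^{(k-1)}) + c_2\, \bar K'\, G(\delta^{(k-1)}),
\]
with $c_2 \in (c_2(L_1,L_1),1)$ and $\bar K' = ((\bar B_1 + \bar D_3) + (\bar A_2 + \bar D_2)\bar H)T$, provided $h$ is small. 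Iterating yields the geometric decay $G(\delta^{(k)}) \le c_2^k G(\delta^{(0)})$ and $H(\delta^{(k)}) \le c_2^k\bigl(H(\delta^{(0)}) + k\,\bar K'\,G(\delta^{(0)})\bigr)$, with base case $\delta^{(0)} = \tilde u^\pi - u^0$ of order $h$ by Theorem~\ref{time_standard_results}.

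I would then reconstruct $u^\pi - u^0 = \sum_{k \ge 0} \delta^{(k)}$ and sum pointwise, mimicking the Cauchy estimate in the proof of Theorem~\ref{thm:existence_uniqueness}: taking square roots and using $|\delta^{(k)}_i(x)| \le \sqrt{G(\delta^{(k)})}\,|x| + \sqrt{H(\delta^{(k)})}$ together with the convergence of $\sum_k c_2^{k/2}$ and $\sum_k \sqrt{k}\,c_2^{k/2}$ gives $|u_i^\pi(x) - u(t_i,x)| \le C(1+|x|)\sqrt h$ uniformly in $i$, and squaring produces the claimed bound. The estimate for $v$ follows from the same iteration applied to $F_z^\pi$, whose differences obey the identical contraction constants in Theorem~\ref{thm:growth_of_difference}, now seeded by the $O(h)$ base case $\tilde v^\pi - v^0$; alternatively one transfers from $u$ to $v$ directly through the differentiation relation of Remark~\ref{remark:growthcoeff}.

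The step I expect to be the main obstacle is the bookkeeping required to keep the entire iteration $(w^{(k)})$ inside the admissible class on which the contraction factor $c_2 < 1$ is valid. Concretely, I must verify that the true field $u^0$ itself satisfies the uniform bounds $L(u^0) \le \bar L$, $G(u^0) \le \bar G$, $H(u^0) \le \bar H$, and that these bounds propagate to every $w^{(k)}$ via Theorems~\ref{estimate_uniform_lipschitz} and~\ref{estimate_uniform_growth}, so that $c_2(\lambda_1,h,\bar L,\bar G) \le c_2 < 1$ applies at each step by monotonicity in its arguments. This is precisely where Assumptions~\ref{assume:bounded-sigma} and~\ref{assume:pde}, guaranteeing $u \in C_b^2$ and that $v$ inherits its Lipschitz and growth constants from $u$, are indispensable: without the differentiation setup the two decoupling fields could not be held inside a single contraction simultaneously.
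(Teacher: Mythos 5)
Your strategy is genuinely different from the paper's. You iterate the operator starting from the \emph{true} field, $w^{(0)}=u^0$, $w^{(k+1)}=F_y^\pi(w^{(k)})$, telescope $u^\pi-u^0=\sum_k\delta^{(k)}$, and let the geometric decay of $G(\delta^{(k)}),H(\delta^{(k)})$, seeded by the $O(h)$ base $\tilde u^\pi-u^0$ from Theorem~\ref{time_standard_results}, deliver the bound. The paper instead applies the contraction of Theorem~\ref{thm:growth_of_difference} exactly once, to the single pair $(u^\pi,u^0)$, using the fixed-point identity $u^\pi=F_y^\pi(u^\pi)$ to obtain a self-referential inequality
\begin{equation}
\left|u_i^\pi(x)-u_i^0(x)\right|^2\leq\left[c_2\,G\left(u^{\pi}-u^0\right)+C_{\varepsilon}h\right]|x|^2+c_2\,H\left(u^{\pi}-u^0\right)+C_{\varepsilon}G\left(u^{\pi}-u^0\right)+C_{\varepsilon}h,
\end{equation}
and then bootstraps: it iterates this inequality on the \emph{growth coefficients} $(G_j,H_j)$ of the fixed difference $u^\pi-u^0$ and sends $j\to\infty$. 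The two routes use the same two ingredients (Theorems~\ref{time_standard_results} and~\ref{thm:growth_of_difference}) and the same summation of $\sqrt{k}\,c_2^{k/2}$; yours is perhaps the more familiar Picard-type argument, while the paper's bootstrap has the advantage that it never needs to control any function other than $u^\pi$ (already bounded by Theorem~\ref{thm:existence_uniqueness}) and $u^0$ (only its linear growth enters).

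That advantage is precisely where your version has a gap you flag but do not close. To apply Theorem~\ref{thm:growth_of_difference} at every step of your iteration with a \emph{uniform} factor $c_2(\lambda_1,h,\bar L,\bar G)\le c_2<1$, you need $L(w^{(k)})\le\bar L$ and $G(w^{(k)})\le\bar G$ for all $k$; and to identify $\lim_k w^{(k)}$ with $u^\pi$ via statement (3) of Theorem~\ref{thm:existence_uniqueness} you again need a uniform growth bound on the limit. But Theorems~\ref{estimate_uniform_lipschitz} and~\ref{estimate_uniform_growth} establish these uniform bounds by induction from the base $u^{\pi,0}=0$ (the induction step in Theorem~\ref{estimate_uniform_lipschitz} proves $\tilde L_m\le[A_2+A_2A_4h+K_0][g_x+A_5T]T+1$ starting from $\tilde L_0=0$); they do not automatically cover an iteration seeded at $u^0$, whose Lipschitz and growth constants, while finite under Assumption~\ref{assume:pde}, are not shown to lie in the basin where that induction closes. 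You would need to add a verification that $L(u^0)$, $G(u^0)$, $H(u^0)$ sit below the relevant thresholds (e.g.\ $L(u^0)\le L_1$), or rerun the inductions with this nonzero base. The paper's bootstrap sidesteps the issue entirely, which is why I would regard your argument as incomplete as written, though repairable.
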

\begin{proof}
We first consider the estimate for $u_i^\pi(x)$. Due to Theorem \ref{thm:existence_uniqueness}, we have, for sufficiently small $h>0$,  any $\bar{L} > L_1 $ and $\bar{G} > L_1$, 
\begin{equation}
L\left(u^{\pi}\right) \leq \bar{L}, \quad G\left(u^{\pi}\right) \leq \bar{G}.
\end{equation}
Moreover, we know from Theorem \ref{time_standard_results} that the decoupling field $u^0$ is Lipschitz and of linear growth. This allows us to apply Theorem \ref{thm:growth_of_difference} on $u^{\pi}$ and $u^0$, 
\begin{equation}
\begin{aligned}
G\left(u^{\pi}-\tilde{u}^\pi\right) \leq & c_2\left(\lambda_1, h, \bar{L}, \bar{G}\right) G\left(u^{\pi}-u^0\right), \\
H\left(u^{\pi}-\tilde{u}^\pi\right) \leq & c_2\left(\lambda_1, h, \bar{L}, \bar{G}\right) H\left(u^{\pi}-u^0\right) + c_2\left(\lambda_1, h, \bar{L}, \bar{G}\right)\left[(B_1+D_3)+(A_2+D_2) \bar{H}\right] T G\left(u^{\pi}-u^0\right).
\end{aligned}
\end{equation}
Using the estimate from Theorem \ref{time_standard_results}, we can derive, for $\varepsilon>0$,
\begin{equation}
\begin{aligned}
\left|u_i^\pi(x)-u_i^0(x)\right|^2 
\leq & (1+\varepsilon)\left|u_i^\pi(x)-\tilde{u}_i^\pi(x)\right|^2+C_{\varepsilon}\left|\tilde{u}_i^\pi(x)-u\left(t_i, x\right)\right|^2 \\
\leq & (1+\varepsilon)\left[G\left(u^{\pi}-\tilde{u}^\pi\right)|x|^2+H\left(u^{\pi}-\tilde{u}^\pi\right)\right]+C_{\varepsilon}\left(1+|x|^2\right) h \\
\leq & {\left[(1+\varepsilon) c_2\left(\lambda_1, h, \bar{L}, \bar{G}\right) G\left(u^{\pi}-u^0\right)+C_{\varepsilon} h\right]|x|^2 } \\
& +(1+\varepsilon) c_2\left(\lambda_1, h, \bar{L}, \bar{G}\right)  \times \left[H\left(u^{\pi}-u^0\right)+\left[ (B_1+D_3) + (A_2+D_2) \bar{H}\right] T G\left(u^{\pi}-u^0\right)\right]  \\
& + C_{\varepsilon} h.
\end{aligned}
\end{equation}
For any $c_2\left(L_1, L_1\right)<c_2<1$, we choose $\bar{L}, \bar{G}$ and $\varepsilon$ appropriately such that, for $h$ small enough,
\begin{equation}
(1+\varepsilon) c_2\left(\lambda_1, h, \bar{L}, \bar{G}\right) \leq c_2.
\end{equation}
Thus, we can write the following, by substitution
\begin{equation}
\begin{aligned}
\left|u_i^\pi(x)-u_i^0(x)\right|^2 \leq & {\left[c_2 G\left(u^{\pi}-u^0\right)+C_{\varepsilon} h\right]|x|^2 }  + c_2 H\left(u^{\pi}-u^0\right)+C_{\varepsilon} G\left(u^{\pi}-u^0\right)+C_{\varepsilon} h.
\end{aligned}
\end{equation}
We now follow the same arguments in \cite{bender2008time} to handle the uniqueness of these coefficients. Let's fix some $G_0, H_0$, such that
\begin{equation}
\left|u_i^\pi(x)-u_i^0(x)\right|^2 \leq G_0|x|^2+H_0.
\end{equation}
For $j=1,2, \ldots$, we denote
\begin{equation}
G_j \coloneqq c_2 G_{j-1} + C_{\varepsilon} h, \quad 
H_j \coloneqq c_2 H_{j-1} + C_{\varepsilon} G_{j-1} + C_{\varepsilon} h.
\end{equation}
Now, we can write
\begin{equation}
\left|u_i^\pi(x)-u_i^0(x)\right|^2 \leq G_j|x|^2 + H_j \quad \forall j.
\end{equation}
Note that by iterating the above, we have
\begin{equation}
\begin{aligned}
& G_j = G_0 c_2^j + C_{\varepsilon} h \frac{1-c_2^j}{1-c_2}, \\
& H_j = H_0 c_2^j + C_{\varepsilon} G_0 j c_2^j + \frac{C_{\varepsilon} h}{1-c_2}\left[\frac{1-c_2^j}{1-c_2} - j c_2^j\right] + C_{\varepsilon} h \frac{1-c_2^j}{1-c_2}.
\end{aligned}
\end{equation}
Since $c_2<1$, and sending $j \rightarrow \infty$, we get
\begin{equation}
\left|u_i^\pi(x)-u_i^0(x)\right|^2 \leq \frac{C_{\varepsilon} h}{1-c_2}|x|^2+\frac{C_{\varepsilon} h}{\left(1-c_2\right)^2}.
\end{equation}
We can derive the estimate for $v_i^\pi$ in the same way. Applying Theorem \ref{thm:growth_of_difference} on  $v_i^\pi$ and  $v_i^0$, and noticing that the resulting coefficients in the inequalities are the same as in the $u_i^\pi$ case, gives us 
\begin{equation}
\left|v_i^\pi(x)-v_i^0(x)\right|^2 \leq \frac{C_{\varepsilon} h}{1-c_2}|x|^2+\frac{C_{\varepsilon} h}{\left(1-c_2\right)^2}.
\end{equation}
\end{proof}

\begin{theorem}\label{estimates_uandv}
Under the assumptions of Theorem \ref{thm63}, we have, for any constant $c_2$ satisfying $c_2\left(L_1, L_1\right)<c_2<1$ and for $h$ small enough,
\begin{equation}
\left| u_i^{\pi, m}(x) - u(t_i, x) \right|^2 
\leq C (1+|x|^2) (m c_2^m + h)
,\quad
\left| v_i^{\pi, m}(x) -v(t_i, x) \right|^2 
\leq C (1+|x|^2) (m c_2^m + h).
\end{equation}
\end{theorem}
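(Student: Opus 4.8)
The plan is to estimate the total error by inserting the fixed-point solution $u_i^\pi$ (respectively $v_i^\pi$) of the discretized scheme \eqref{full_scheme} as an intermediate quantity, thereby splitting the error into an \emph{iteration error} and a \emph{time-discretization error}. Concretely, I would write
\begin{equation}
u_i^{\pi,m}(x) - u(t_i, x) = \bigl(u_i^{\pi,m}(x) - u_i^\pi(x)\bigr) + \bigl(u_i^\pi(x) - u(t_i, x)\bigr),
\end{equation}
and apply the elementary inequality $|a+b|^2 \le 2|a|^2 + 2|b|^2$. Each of the two resulting terms is already controlled by a theorem proved earlier in this section, so no fresh analysis of the scheme is needed; the work is entirely in assembling the bounds and tracking the constants.

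For the iteration error, Statement (2) of Theorem \ref{thm:existence_uniqueness} gives
\begin{equation}
\max_{0\le i\le N}\bigl|u_i^{\pi,m}(x) - u_i^\pi(x)\bigr|^2 \le 3\bigl(\bar{G}|x|^2 + \bar{H} + \bar{K}m\bigr)\frac{c_2^m}{(1-\sqrt{c_2})^4},
\end{equation}
where $\bar K$ is the constant introduced in the proof of that theorem. Since $\bar{G}, \bar{H}, \bar{K}$ and $(1-\sqrt{c_2})^{-4}$ are fixed constants (independent of $i$, $x$, $m$ and $h$), and since $\bar{G}|x|^2 + \bar{H} + \bar{K}m \le C(1+|x|^2)(1+m)$ with $(1+m)c_2^m \le 2m c_2^m$ for $m\ge 1$, this term is bounded by $C(1+|x|^2)\, m c_2^m$. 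For the time-discretization error, Theorem \ref{thm63} gives directly $|u_i^\pi(x) - u(t_i, x)|^2 \le C(1+|x|^2)h$. Combining the two pieces through the inequality above yields
\begin{equation}
\bigl|u_i^{\pi,m}(x) - u(t_i, x)\bigr|^2 \le C(1+|x|^2)\bigl(m c_2^m + h\bigr),
\end{equation}
which is the claimed bound for $u$.

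The estimate for $v$ follows by the identical decomposition, now invoking the $v$-parts of Theorem \ref{thm:existence_uniqueness}(2) (with $\tilde{G}, \tilde{H}, \tilde{K}$ replacing $\bar{G}, \bar{H}, \bar{K}$) and the $v$-part of Theorem \ref{thm63}. I do not expect a genuine obstacle here, since the result is a direct superposition of two previously established estimates; the statement is a clean corollary. The only points requiring care are: absorbing the various fixed constants and the $(1-\sqrt{c_2})^{-4}$ factor into the generic $C$; converting the additive growth $\bar{G}|x|^2+\bar{H}+\bar{K}m$ into the product form $(1+|x|^2)\, m c_2^m$ (which is why the bound is understood for $m\ge 1$, the iteration error being of order one rather than order $m c_2^m$ at $m=0$, where $u^{\pi,0}\equiv 0$); and checking that conditions \eqref{condition:L0} and \eqref{condition:c2_func}, together with the regularity of Theorem \ref{time_standard_results}, are precisely what is required for both cited theorems to hold simultaneously for the same admissible $c_2$.
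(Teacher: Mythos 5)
Your proposal is correct and follows exactly the paper's route: the paper states the result as a direct consequence of Theorem \ref{thm:existence_uniqueness} and Theorem \ref{thm63}, which is precisely the triangle-inequality decomposition through $u_i^\pi$ (respectively $v_i^\pi$) that you carry out. Your version merely makes explicit the constant-tracking and the absorption of $\bar{G}|x|^2+\bar{H}+\bar{K}m$ into $C(1+|x|^2)\,m c_2^m$, which the paper leaves implicit.
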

\begin{proof}
This is a direct consequence of Theorem \ref{thm:existence_uniqueness} and Theorem \ref{thm63}.
\end{proof}

\begin{theorem}\label{main_error_estimate}
Under the assumptions of Theorem \ref{thm63}, we have, for any constant $c_2$ satisfying $c_2\left(L_1, L_1\right)<c_2<1$ and for $h$ small enough,
\begin{equation}
\begin{aligned}
\sup_{1\leq i\leq N}
\mathbb{E}\biggl[
\sup_{t\in[t_{i-1},t_i]}
\bigl(
|X_t-X_{i-1}^{\pi,m}|^2
+
|Y_t-Y_{i-1}^{\pi,m}|^2
\bigr)
\biggr]
& \\
\quad
+\sum_{i=1}^N
\mathbb{E}\biggl[
\int_{t_{i-1}}^{t_i}
|Z_t-\hat{Z}_{i-1}^{\pi,m}|^2
\,\mathrm{d}t
\biggr]
&\leq
C(1+|x_0|^2)(mc_2^m+h).
\end{aligned}
\end{equation}

\end{theorem}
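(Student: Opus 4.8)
The plan is to bound the error between the continuous solution $(X,Y,Z)$ and the $m$-th Markovian iterate by inserting, as an intermediate object, the decoupled discretized scheme \eqref{decoupled_discretized_formulation} driven by the \emph{exact} decoupling fields $u,v$, whose processes are $(\tilde X^\pi,\tilde Y^\pi,\tilde Z^\pi)$. Applying $|a+b|^2\le 2|a|^2+2|b|^2$ termwise to each of the three contributions on the left-hand side, the quantity splits into a pure time-discretization error, measuring $(X,Y,Z)$ against $(\tilde X^\pi,\tilde Y^\pi,\tilde Z^\pi)$, and a pure iteration error, measuring $(\tilde X^\pi,\tilde Y^\pi,\tilde Z^\pi)$ against $(X^{\pi,m},Y^{\pi,m},\hat Z^{\pi,m})$. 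The first piece is exactly the content of estimate \eqref{decouple_estimate} in Theorem \ref{time_standard_results} and is therefore bounded by $C(1+|x_0|^2)h$, so the whole task reduces to controlling the second piece.

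For the iteration error I would first treat the forward processes. The recursions for $\tilde X^\pi$ and $X^{\pi,m}$ differ only through the state argument and through the driving decoupling fields, the former using $u(t_i,\cdot),v(t_i,\cdot)$ and the latter using $u_i^{\pi,m-1},v_i^{\pi,m-1}$. Adding and subtracting the exact fields evaluated at $X_i^{\pi,m}$, using the uniform Lipschitz continuity of $b,\sigma$ from Assumption \ref{assume:lip}, and invoking the one-step estimate behind Lemma \ref{lem:estimate_two_X}, a discrete Gronwall argument gives
\begin{equation}
\max_{0\le i\le N}\E\bigl[|\tilde X_i^\pi-X_i^{\pi,m}|^2\bigr]\le C\,\sum_{i=0}^{N-1}\E\bigl[|u(t_i,X_i^{\pi,m})-u_i^{\pi,m-1}(X_i^{\pi,m})|^2+|v(t_i,X_i^{\pi,m})-v_i^{\pi,m-1}(X_i^{\pi,m})|^2\bigr]h.
\end{equation}
The summand is controlled by Theorem \ref{estimates_uandv}, which yields $C(1+\E[|X_i^{\pi,m}|^2])((m-1)c_2^{m-1}+h)$, and the uniform linear-growth bound of Theorem \ref{estimate_uniform_growth} makes $\E[|X_i^{\pi,m}|^2]\le C(1+|x_0|^2)$ uniformly in $i$ and $m$; since $(m-1)c_2^{m-1}\le c_2^{-1}mc_2^m$, the forward iteration error is $O((1+|x_0|^2)(mc_2^m+h))$. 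The backward processes are then compared via Lemma \ref{lem:estimate_two_Y}: with $A_3\le1$ the term $(1-A_3)h|\Delta\hat Z|^2$ is nonnegative and simultaneously delivers the $L^2$-control of $\tilde Z^\pi-\hat Z^{\pi,m}$, while a second discrete Gronwall recursion bounds $\max_i\E[|\tilde Y_i^\pi-Y_i^{\pi,m}|^2]$ and $\sum_i\E[|\tilde Z_i^\pi-\hat Z_i^{\pi,m}|^2]h$ in terms of the forward error and the terminal difference $|g(\tilde X_N^\pi)-g(X_N^{\pi,m})|^2\le g_x|\tilde X_N^\pi-X_N^{\pi,m}|^2$, both already of order $mc_2^m+h$.

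Combining the two pieces gives the claimed bound; the suprema over each subinterval $[t_{i-1},t_i]$ in the iteration part are harmless, since $\tilde X^\pi$, $X^{\pi,m}$ and the associated backward values are constant there, so the only genuine $\sup_t$ appears in the time-discretization part already absorbed into Theorem \ref{time_standard_results}. The main obstacle is keeping every Gronwall constant uniform in both $N$ and $m$: this is exactly where the uniform Lipschitz and linear-growth bounds of Theorems \ref{estimate_uniform_lipschitz} and \ref{estimate_uniform_growth} enter, as they prevent the one-step amplification factors $1+Ch$ from compounding into an $N$-dependent constant and keep the moments of $X^{\pi,m}$ bounded. A secondary point requiring care is the index bookkeeping, namely that the forward SDE at iteration $m$ is driven by the $(m-1)$-th decoupling fields, so that the rate entering the forward estimate is $(m-1)c_2^{m-1}$ rather than $mc_2^m$; this only affects the constant $C$ and not the stated form of the bound.
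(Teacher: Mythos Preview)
Your proposal is correct and follows essentially the same route as the paper: split through the intermediate decoupled scheme $(\tilde X^\pi,\tilde Y^\pi,\tilde Z^\pi)$, invoke Theorem~\ref{time_standard_results} for the time-discretization part, and control the iteration part via Lemmas~\ref{lem:estimate_two_X} and~\ref{lem:estimate_two_Y}, Theorem~\ref{estimates_uandv}, and discrete Gronwall. One small point of precision: to actually \emph{extract} the $Z$-error from Lemma~\ref{lem:estimate_two_Y} you need $A_3$ bounded strictly below $1$ (the paper fixes $\lambda_2=\lambda_3=\tfrac15$ so that $A_3\le\tfrac12$ for small $h$), not merely $A_3\le1$; conversely, your bookkeeping on the $(m{-}1)$ index in the forward recursion is more careful than the paper's own write-up.
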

\begin{proof}

From the estimate in Theorem \ref{time_standard_results}, it is sufficient to show that the following estimate holds
\begin{equation}
\sup_{0 \leq i \leq N} \E\left[ \left|\Delta X_i\right|^2+\left|\Delta Y_i\right|^2 \right]+h \sum_{i=0}^{N-1} \E\left[ \left|\Delta Z_i\right|^2\right] 
\leq C (1+|x_0|^2)  (m c_2^m+h),
\end{equation}
where
$\Delta X_i \coloneqq \tilde{X}_i^\pi - X_i^{\pi, m}$, $\Delta Y_i \coloneqq \tilde{Y}_i^\pi-Y_i^{\pi, m}$, $\Delta Z_i \coloneqq \tilde{Z}_i^\pi-\hat{Z}_i^{\pi, m}
$.

Applying Lemma \ref{lem:estimate_two_X} on  $\tilde{X}_i^\pi$ and $X_i^{\pi, m}$,
\begin{equation}
\begin{aligned}
\E_{t_i} \left[ \left|\Delta X_{i+1}\right|^2 \right]  
& \leq  \left[1 + (A_1+1) h + (1+\lambda_1) A_2 h L (\varphi^1 ) + (1+\lambda_0) (b_z + b_z h) h L (\xi^1) \right]  \left| \Delta X_{i} \right|^2   \\
& + \left(1+\lambda_1^{-1}\right) A_2 h\left|\varphi^1\left( X_i^{\pi, m} \right)-\varphi^2\left( X_i^{\pi, m} \right)\right|^2 \\
& + \left(1+\lambda_0^{-1}\right) (b_z + b_z h) h \left|\xi^1\left( X_i^{\pi, m} \right)-\xi^2\left( X_i^{\pi, m} \right)\right|^2. 
\end{aligned}
\end{equation}
Setting $\lambda_1=\lambda_0 = 1$ and taking expectations for both sides, we obtain the following, because of the estimates from Theorem \ref{estimates_uandv},
\begin{equation}
\begin{aligned}
\E \left[ \left|\Delta X_{i+1}\right|^2 \right] 
& \leq \E \left[ (1+C h)\left|\Delta X_i\right|^2+C h\left|u\left(t_i, X_i^{\pi, m}\right)-u_i^{\pi, m}\left(X_i^{\pi, m}\right)\right|^2  + C h\left|v\left(t_i, X_i^{\pi, m}\right)-v_i^{\pi, m}\left(X_i^{\pi, m}\right)\right|^2 \right]  \\
& \leq (1+C h) \E\left[ \left|\Delta X_i\right|^2\right] + C (1+|x_0|^2) (m c_2^m+h) h.
\end{aligned}
\end{equation}
Since $\Delta X_0=0$, we have
\begin{equation}
\sum_{0 \leq i \leq N} \E \left[ \left|\Delta X_i\right|^2\right] 
\leq C (1+|x_0|^2 ) (m c_2^m+h).
\end{equation}
Next, we choose $\lambda_2=\lambda_3=\frac{1}{5}$ and $h$ sufficiently small so that $A_3 \leq \frac{1}{2}$. Applying Lemma \ref{lem:estimate_two_Y}, we obtain
\begin{equation}
\E \left[ \left|\Delta Y_i\right|^2+\frac{1}{2} h\left|\Delta Z_i\right|^2\right] \leq \E\left[ (1+C h)\left|\Delta Y_{i+1}\right|^2+C h\left|\Delta X_i\right|^2\right].
\end{equation}
Since
\begin{equation}
\left|\Delta Y_N\right|^2=\left|g (\tilde{X}_N^\pi )-g (X_N^{\pi, m} )\right|^2 \leq C \left|\Delta X_N \right|^2,
\end{equation}
we can easily get
\begin{equation}
\begin{aligned}
\sup_{0 \leq i \leq N} \E \left[\left|\Delta Y_i\right|^2\right] + h \sum_{i=0}^{N-1} \E\left[ \left|\Delta Z_i\right|^2 \right]   
\leq C \sup_{0 \leq i \leq N} \E\left[ \left|\Delta X_i\right|^2\right] 
\leq C (1+|x_0|^2) (m c_2^m+h).
\end{aligned}
\end{equation}
This completes the proof.
\end{proof}
 
Now, we may conclude the convergence analysis of the proposed scheme \eqref{full_scheme}. With our differentiation setting, we have derived uniform bounds for the Lipschitz constants and linear growth coefficients of $u_i^{\pi,m}$ and $v_i^{\pi,m}$, stated in Theorems \ref{estimate_uniform_lipschitz} and \ref{estimate_uniform_growth}, respectively. With these results in hand, Theorem \ref{thm:existence_uniqueness} establishes the convergence of $u_i^{\pi,m}$ and $v_i^{\pi,m}$ and hence the well-posedness of scheme \eqref{full_scheme}, while Theorem \ref{main_error_estimate} provides an error estimate. In addition to the convergence study of \eqref{full_scheme}, it is worth mentioning that, under our standing assumptions, Theorems \ref{thm:existence_uniqueness} and \ref{main_error_estimate} also provide theoretical justification for the Deep BSDE method with $Z$-coupling studied in \cite{negyesi2024generalized}, extending the analytical framework of \cite{hanlong2020}.

\section{Numerical experiments} \label{sec4}

In this section, we first introduce an efficient algorithm for computing the numerical scheme \eqref{full_scheme}. The subsequent subsections present numerical experiments using this algorithm and illustrate the convergence behavior predicted by the theoretical analysis.

\subsection{An algorithm for the numerical scheme}

As discussed in Section~\ref{sec2}, the computations of $Y$ and $Z$ at each time step should be carried out simultaneously in order to preserve the differentiation relation. This can be achieved naturally by directly incorporating the relation into the construction of the function approximators for $Y$ and $Z$. To this end, we formulate a joint optimization problem that only needs to be solved once at each time step, rather than solving two separate regression problems as in \cite{bender2008time}. We introduce the tuple $( \Check{X}_{i}^{\pi, m}, \Check{Y}_{i}^{\pi, m}, \Check{Z}_{i}^{\pi, m} )$, corresponding to the solution of the joint optimization problem, and the associated decoupling fields $\Check{u}_i^{\pi, m}$ and $\Check{v}_i^{\pi, m}$ share a common set of parameters $\theta_i^m$, and are initialized in the same manner as their counterparts in the scheme \eqref{full_scheme}.

The algorithm presented below describes the computation of the tuple
$
(\Check{X}_{i}^{\pi,m},\Check{Y}_{i}^{\pi,m},\Check{Z}_{i}^{\pi,m})
$,
which serves as an approximation to
$
(X_{i}^{\pi,m},Y_{i}^{\pi,m},\hat{Z}_{i}^{\pi,m})
$
in the numerical scheme~\eqref{full_scheme}. To simplify the notation, the algorithm is written pathwise. In practice, the quantities $\Check{X}_{i}^{\pi,m}$, $\Check{Y}_{i}^{\pi,m}$, $\Check{Z}_{i}^{\pi,m}$ and $\Delta W_i^m$  should be interpreted as a batch of Monte Carlo samples, and the expectation in line~\ref{algo:obj} is approximated by the corresponding empirical average over the batch.

\begin{algorithm}
  \caption{Markovian iteration for coupled FBSDEs with $Z$-Coupling.}\label{our_algo}
  \begin{algorithmic}[1] 
    \State \textbf{Input:} Initial parameters $\{\theta_i^0  \}_{0\leq i\leq N-1}$,  number of iterations $M$, number of time steps $N$.
    \State \textbf{Data:} Simulated Brownian increments $\{\Delta W_{i}^m \}_{0\leq i\leq N-1, 1\leq m\leq M}$.
    \State \textbf{Output:} Discrete-time approximations $\{(\Check{X}_{i}^{\pi, M}, \Check{Y}_{i}^{\pi, M}, \Check{Z}_{i}^{\pi, M})\}_{0\leq i\leq N}$
    \For{$m = 1$ to $M$} 
        \State $\Check{X}_{0}^{\pi, m} = x_0$
        \For{$i = 0$ to $N-1$}
        \State $\Check{\mathcal{Y}}_{i}^{\pi, m} = \check{u}_i^m (\Check{X}_{i}^{\pi, m}; \theta_i^{m-1}) $  
        \State $\Check{\mathcal{Z}}_{i}^{\pi, m} = (\partial_x \check{u}_i^m (\Check{X}_{i}^{\pi, m}; \theta_i^{m-1}) ) \sigma(t_i, \Check{X}_{i}^{\pi, m},  \Check{\mathcal{Y}}_{i}^{\pi, m} ) $   
        \State $\Check{X}_{i+1}^{\pi, m} =  \Check{X}_{i}^{\pi, m} + b(t_i, \Check{X}_{i}^{\pi, m}, \Check{\mathcal{Y}}_{i}^{\pi, m}, \Check{\mathcal{Z}}_{i}^{\pi, m}) h  + \sigma (t_i, \Check{X}_{i}^{\pi, m}, \Check{\mathcal{Y}}_{i}^{\pi, m}) \Delta W_{i}^{m} $
        \EndFor
        \State $\Check{Y}_{N}^{\pi, m} = g( \Check{X}_{N}^{\pi, m} )$ 
        \State $\Check{Z}_{N}^{\pi, m} = \partial_x g( \Check{X}_{N}^{\pi, m}) \sigma(t_N, \Check{X}_{N}^{\pi, m}, \Check{Y}_{N}^{\pi, m} ) $ 
        \For{$i = N-1$ to $0$}
        \State $\theta_i^{m}
                \leftarrow
                \arg\min_{\theta}
                \E \!\left[
                \left|
                \Check{Y}_{i+1}^{\pi,m}
                -
                \Big(
                \Check{\mathcal Y}_{i}^{\pi,m}
                -
                f(t_i,\Check X_i^{\pi,m},
                \Check{\mathcal Y}_{i+1}^{\pi,m},
                \Check{\mathcal Z}_{i}^{\pi,m})h
                +
                \Check{\mathcal Z}_{i}^{\pi,m}\Delta W_i^m
                \Big)
                \right|^2
                \right]$
                \label{algo:obj}
        \State $\Check{Y}_{i}^{\pi, m} = \check{u}_i^m(\Check{X}_{i}^{\pi, m}; \theta_i^{m}) $ \label{algo:approx_y}
        \State $\Check{Z}_{i}^{\pi, m} = (\partial_x \check{u}_i^m (\Check{X}_{i}^{\pi, m}; \theta_i^{m}) ) \sigma(t_i, \Check{X}_{i}^{\pi, m},  \Check{Y}_{i}^{\pi, m} ) $ 
        \label{algo:approx_z}
        \EndFor
    \EndFor
  \end{algorithmic} 
\end{algorithm}

As shown in lines~\ref{algo:approx_y} and~\ref{algo:approx_z} of Algorithm~\ref{our_algo}, we parameterize $\Check{Y}_{i}^{\pi,m}$ and $\Check{Z}_{i}^{\pi,m}$ by function approximators sharing the same parameters, for example, a collection of differentiable basis functions. The gradient $\partial_x \Check{u}_i^{\pi,m}$ used in the approximation of $\Check{Z}_{i}^{\pi,m}$ is obtained by directly differentiating $\Check{u}_i^{\pi,m}$, so that the differentiation relation between $Y$ and $Z$ is preserved. The common parameters $\theta_i^m$ are obtained by solving the optimization problem in line~\ref{algo:obj}, which updates the previous parameters $\theta_i^{m-1}$ to $\theta_i^m$. In particular, line~\ref{algo:obj} can be viewed as a parameterized approximation of the following minimization problem:
\begin{equation}\label{algo:opt_obj}
\min_{y_i,z_i\in L^2(\mathcal F_{t_i})}
\mathbb{E}\Bigl[
\bigl|
\Check{Y}_{i+1}^{\pi,m}
-
\bigl(
y_i
-
h\,f(t_i,\Check{X}_i^{\pi,m}, \Check{Y}_{i+1}^{\pi,m} , z_i)
+
z_i\,\Delta W_i^m
\bigr)
\bigr|^2
\Bigr]
\end{equation}
where $(y_i,z_i)$ is a pair of $\mathcal{F}_{t_i}$-measurable random variables satisfying the differentiation relation. We remark that a similar setup has been
used and studied in \cite{hure2020deep} for solving decoupled FBSDEs and the associated PDEs.

So far, we have introduced a practical algorithm that produces the sequence of approximations
$
(\Check{X}_{i}^{\pi,m}, \Check{Y}_{i}^{\pi,m}, \Check{Z}_{i}^{\pi,m}).
$
These quantities serve as approximations to
$
(X_{i}^{\pi,m}, Y_{i}^{\pi,m}, \hat{Z}_{i}^{\pi,m}),
$
defined by the numerical scheme \eqref{full_scheme}. More precisely, one can establish the estimate
\begin{equation}\label{aux_estimate}
\mathbb{E}\bigl[|\Check{X}^{\pi,m}_i-X^{\pi,m}_i|^2\bigr]
+
\mathbb{E}\bigl[|\Check{Y}^{\pi,m}_i-Y^{\pi,m}_i|^2\bigr]
+
\mathbb{E}\bigl[|\Check{Z}^{\pi,m}_i-\hat{Z}^{\pi,m}_i|^2\bigr]
\leq Ch,
\qquad \forall\, m,i,
\end{equation}
for some constant $C>0$ independent of $h$. Moreover, when the driver $f$ is independent of $Y$, the approximations coincide with the corresponding quantities in \eqref{full_scheme}, that is,
$
(\Check{X}_{i}^{\pi,m}, \Check{Y}_{i}^{\pi,m}, \Check{Z}_{i}^{\pi,m})
=
(X_{i}^{\pi,m}, Y_{i}^{\pi,m}, \hat{Z}_{i}^{\pi,m}).
$ 
We defer the proof of \eqref{aux_estimate}, together with a more detailed discussion of the approximation error, to the Appendix \ref{sec:appendix} in order to maintain the flow of the main exposition.

In view of the above estimate, it is straightforward to derive an error bound of the same form as that in Theorem~\ref{main_error_estimate}, but for the approximation
$ 
(\Check{X}_{i}^{\pi,m}, \Check{Y}_{i}^{\pi,m}, \Check{Z}_{i}^{\pi,m})
$ 
with respect to the exact solution $ (X_i,Y_i,Z_i)$. 
Moreover, the estimate \eqref{aux_estimate} shows that the discrepancy between
$ 
(\Check{X}_{i}^{\pi,m}, \Check{Y}_{i}^{\pi,m}, \Check{Z}_{i}^{\pi,m})
$ 
and
$ 
(X_{i}^{\pi,m}, Y_{i}^{\pi,m}, \hat{Z}_{i}^{\pi,m})
$ 
is controlled by \(Ch\), which is absorbed into the overall error estimate and becomes negligible as \(h \to 0\). Consequently, we shall not distinguish between these quantities in the remainder of this section.

\subsection{Numerical experiment setting}

In the remaining subsections, we present several numerical experiments to illustrate the theoretical results developed above. For each example considered below, a reference solution is obtained by first decoupling the FBSDE and then applying the Euler--Maruyama method on a sufficiently fine uniform time grid with \(N' = 2\times 10^4\). The resulting time discretization error is therefore negligible, and the computed solution may be regarded as a surrogate for the solution triplet \((X,Y,Z)\) of the continuous problem. Using this reference solution, we assess the quality of the numerical approximation through the following error measures:
\begin{equation}\label{def:error_terms}
\begin{aligned}
\mathrm{Err}(X)
&=
\sup_{0\le i\le N}
\left(
\frac{1}{\Lambda}
\sum_{j=1}^{\Lambda}
|X_i^{\pi,m}(j)-X_i(j)|^2
\right), &
\mathrm{Err}(Y)
&=
\sup_{0\le i\le N}
\left(
\frac{1}{\Lambda}
\sum_{j=1}^{\Lambda}
|Y_i^{\pi,m}(j)-Y_i(j)|^2
\right), \\
\mathrm{Err}(Z)
&=
\frac{T}{N\Lambda}
\sum_{i=0}^{N-1}
\sum_{j=1}^{\Lambda}
|\hat Z_i^{\pi,m}(j)-Z_i(j)|^2, &
\mathrm{Total\ Err}
&=
\mathrm{Err}(X)
+
\mathrm{Err}(Y)
+
\mathrm{Err}(Z).
\end{aligned}
\end{equation}
Here, the subscript \(i\) denotes the time index \(t_i\), while the argument \(j\) refers to the \(j\)-th sample path. Moreover, \(\Lambda\) denotes the number of Monte Carlo sample paths used to approximate the expectations. Throughout the experiments, we choose \(\Lambda=15000\), for which the statistical error is negligible for the purposes of our study.

In the following experiments, we compare the direct extension of \cite{bender2008time}, referred to as ``Direct Extension'', with our proposed Algorithm~\ref{our_algo} on two coupled FBSDE problems. In the Direct Extension, the backward phase consists of two separate regression stages, and the relation
$
v_i^{\pi,m}(\cdot)
=
\partial_x u_i^{\pi,m}(\cdot)\,
\sigma\bigl(t_i,\cdot,u_i^{\pi,m}(\cdot)\bigr)
$
is not enforced.

For both methods, we follow the setting of \cite{bender2008time} and approximate the decoupling fields \(u_i^{\pi,m}\), \(i=0,1,\ldots,N-1\), using quadratic polynomial basis functions:
\begin{equation}\label{u_basis_approx}
u_i^{\pi,m}(x)
=
\alpha_0^{i,m}
+
\sum_{j=1}^{d_1}\alpha_j^{i,m} x_j
+
\sum_{j=1}^{d_1}\alpha_{d_1+j}^{i,m} x_j^2
+
\sum_{1\le j<k\le d_1}
\alpha_{j,k}^{i,m} x_jx_k,
\end{equation}
where \(x_j\) denotes the \(j\)-th component of \(x\in\mathbb{R}^{d_1}\), and the coefficients
\(\alpha_0^{i,m}\),
\(\alpha_j^{i,m}\),
\(\alpha_{d_1+j}^{i,m}\),
and
\(\alpha_{j,k}^{i,m}\)
are parameters to be optimized. A truncation is applied to the domain of the basis functions so that \(u_i^{\pi,m}\) is Lipschitz continuous.

For the Direct Extension, the same type of basis functions is employed to approximate \(v_i^{\pi,m}\). Consequently, an additional set of parameters $\{\beta^{i,m}\}$ is introduced and needs to be optimized. In contrast, for our proposed Algorithm~\ref{our_algo}, a differentiation approach is applied to \eqref{u_basis_approx}, namely,
\begin{equation}\label{v_basis_approx}
v_i^{\pi,m}(x)
=
\bigl(\nabla_x u_i^{\pi,m}(x)\bigr)^\top
\sigma\bigl(t_i,x,u_i^{\pi,m}(x)\bigr),
\end{equation}
which is completely determined by the parameters appearing in \eqref{u_basis_approx}.

In the numerical experiments, we restrict attention to the case \(u_i^{\pi,m}\in\mathbb{R}\) and \(v_i^{\pi,m}\in\mathbb{R}^{1\times d_3}\). Nevertheless, the approximations \eqref{u_basis_approx} and \eqref{v_basis_approx} can be extended in a straightforward manner to the general setting \(u_i^{\pi,m}\in\mathbb{R}^{d_2}\) and \(v_i^{\pi,m}\in\mathbb{R}^{d_2\times d_3}\) with \(d_2>1\).

\subsection{FBSDE with only $Z$-coupling in the driver}\label{example1}

We begin with a simple one-dimensional FBSDE whose driver $f$ depends on $(t,X,Z)$ but not on $Y$. The forward drift depends on $(t,X,Y,Z)$ and therefore lies outside the framework considered in \cite{bender2008time}, while remaining within the scope of our analysis. The forward-backward system is given by
\begin{equation}\label{ex2:fbsde}
\left\{
\begin{aligned}
X_t = &  x_0-\int_0^t \frac{1}{2} \sin (s+X_s) \cos (s+X_s)
\bigl(\sin(s+X_s) Y_s + Z_s\bigr)\,\mathrm{d}s  + \int_0^t \cos (s+X_s)\,\mathrm{d}W_s,\\ 
Y_t = &  \sin (T+X_T)
+\int_t^T \Bigl(\sin (s + X_s) Z_s-\cos (s+X_s)\Bigr)\,\mathrm{d}s
-\int_t^T Z_s\,\mathrm{d}W_s ,
\end{aligned}
\right.
\end{equation}
whose exact solution is
\begin{equation}\label{ex2:fbsde_sol}
Y_t =\sin(t+X_t),
\qquad
Z_t =\cos^2(t+X_t).
\end{equation}

We emphasize that the driver does not depend on $Y$. Consequently, the approximated solution tuple generated by Algorithm~\ref{our_algo} coincides with those of the scheme \eqref{full_scheme}; see the Appendix \ref{sec:appendix} for details.

We first consider the case $T=0.25$ and $x_0=1.5$. Figure~\ref{fig1} presents the numerical errors for various choices of the number of Markovian iterations $M$ and time steps $N$, and compares the Direct Extension of \cite{bender2008time} with Algorithm~\ref{our_algo}.

Figures~\ref{fig1a} and \ref{fig1b} show the errors for $N=2,4, 8,16,32$,
while fixing the number of Markovian iterations at $M=5$. In Figure~\ref{fig1a}, the $\mathrm{Err}(Z)$ increases from approximately $5\times10^{-4}$ to $10^{-2}$ as $N$ increases. Since this error dominates both $\mathrm{Err}(X)$ and $\mathrm{Err}(Y)$, the total error also increases with the mesh refinement. This behaviour indicates that the Direct Extension fails to converge for the FBSDE \eqref{ex2:fbsde}. In contrast, Figure~\ref{fig1b} demonstrates a clear convergence trend for Algorithm~\ref{our_algo}. The observed convergence rate is even better than that predicted by the theoretical analysis.

Figures~\ref{fig1c} and \ref{fig1d} investigate the convergence behaviour with respect to the number of Markovian iterations, with $M=1,2,3,4,5$ and $N=32$. Both methods exhibit relatively stable error profiles as $M$ varies, although Figure~\ref{fig1c} shows a slight increase in $\mathrm{Err}(Y)$. This observation suggests that the contribution of the iteration error may be relatively small for this example, possibly due to a small associated constant in the theoretical error estimate. Nevertheless, a substantial difference can be observed in the magnitude of the errors. Algorithm~\ref{our_algo} consistently achieves errors that are several orders of magnitude smaller. In particular, $\mathrm{Err}(Z)$ remains of order $10^{-6}$ and the $\mathrm{Total\ Err}$ is of order $10^{-5}$, whereas the corresponding errors for the Direct Extension are both approximately $10^{-2}$.

Next, we consider the same FBSDE with different time horizons, $T=0.05, 0.15, 0.20, 0.25$. For each value of $T$, we take $N=2,4,8,16,32$ while fixing $M=5$. The results are presented in Figure~\ref{fig11}.  

The left panel shows that both $\mathrm{Err}(Z)$ and the $\mathrm{Total\ Err}$ increase with $N$ for the Direct Extension, even for a very small time horizon $T=0.05$.  By contrast, the right panel demonstrates the robust convergence behaviour of Algorithm~\ref{our_algo}. Moreover, as the time horizon decreases, the entire error curve shifts downward. This observation is consistent with the theoretical requirements underlying the Markovian iteration, namely that smaller values of $T$ lead to more favourable contraction properties and hence improved numerical performance. Overall, these numerical results suggest that the Direct Extension may fail in the presence of $Z$-coupling, whereas the additional differentiation relation introduced in Algorithm~\ref{our_algo} appears to be effective in producing accurate and convergent approximations.

\begin{figure}[htp]
    \centering
    \begin{subfigure}{0.45\textwidth}
        \centering
        \includegraphics[width=\textwidth]{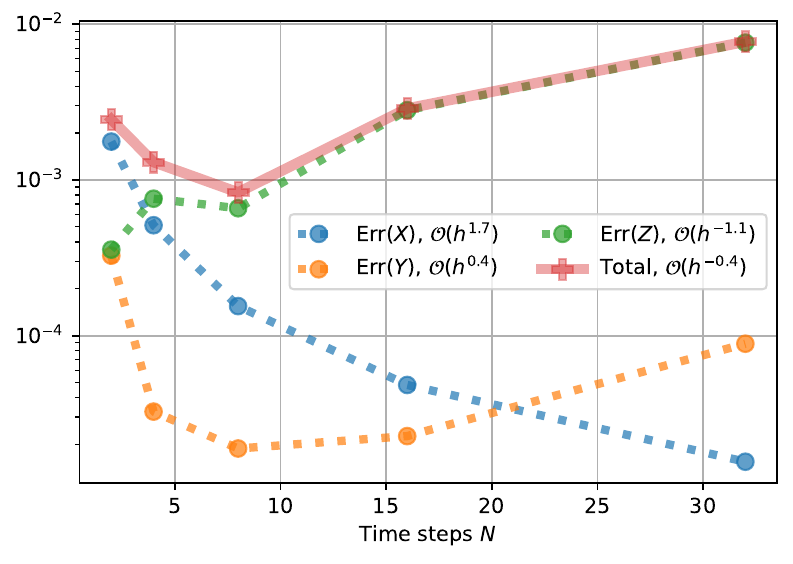}
        \caption{{\it Direct Extension:} Errors vs. time steps}
        \label{fig1a}
    \end{subfigure}
    \hspace{0.2cm}
    \begin{subfigure}{0.45\textwidth}
        \centering
        \includegraphics[width=\textwidth]{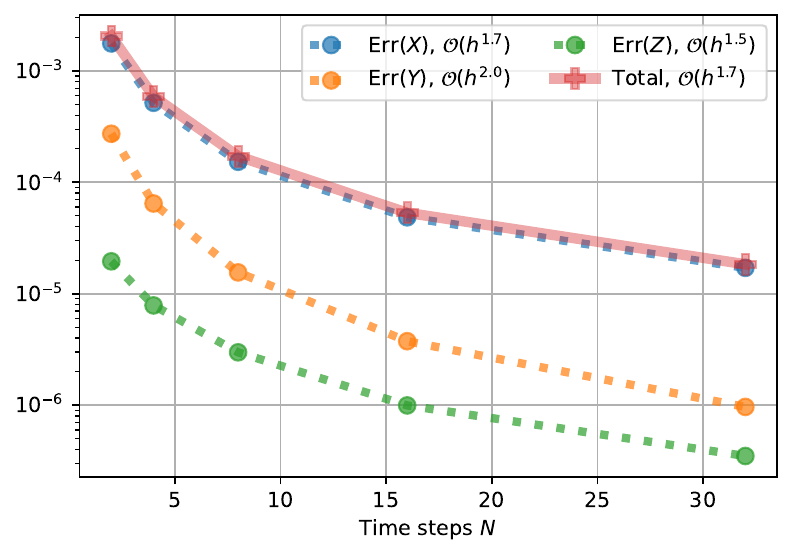}
        \caption{{\it Algorithm \ref{our_algo}}: Errors vs. time steps}
        \label{fig1b}
    \end{subfigure}
    \vskip\baselineskip
    \begin{subfigure}{0.45\textwidth}
        \centering
        \includegraphics[width=\textwidth]{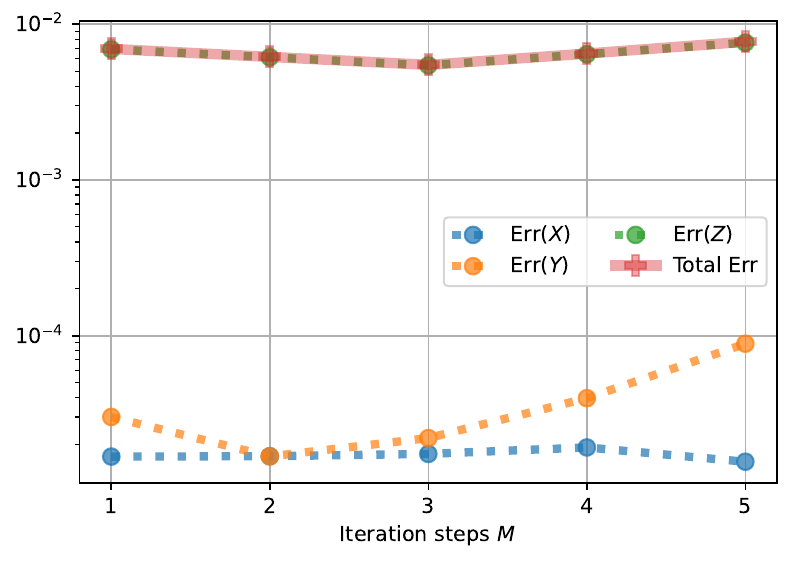}
        \caption{{\it Direct Extension:} Errors vs. iteration steps}
        \label{fig1c}
    \end{subfigure}
    \hspace{0.2cm}
    \begin{subfigure}{0.45\textwidth}
        \centering
        \includegraphics[width=\textwidth]{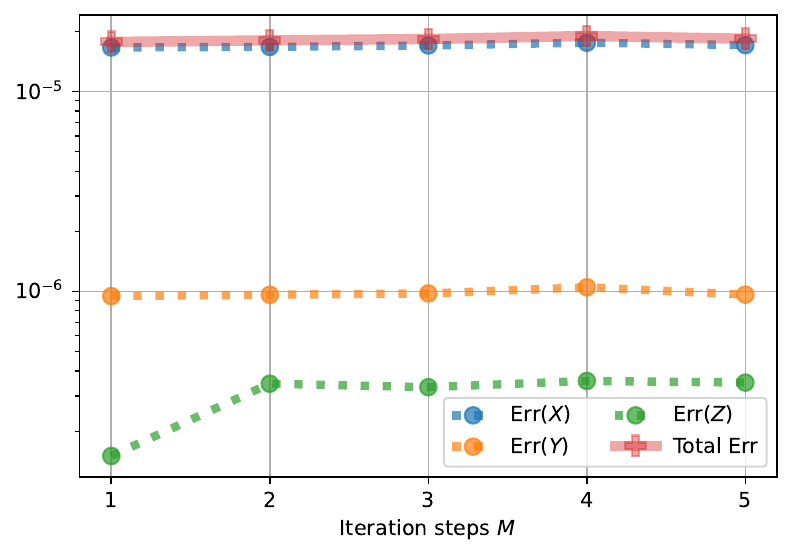}
        \caption{{\it Algorithm \ref{our_algo}}: Errors vs. iteration steps}
        \label{fig1d}
    \end{subfigure}
    \caption{Numerical results for Example~\ref{example1} with varying $N$ and $M$, and $T=0.25$.}
    \label{fig1}
\end{figure}

 \begin{figure}[htp]
    \centering
    \begin{subfigure}{0.45\textwidth}
        \centering
        \includegraphics[width=\textwidth]{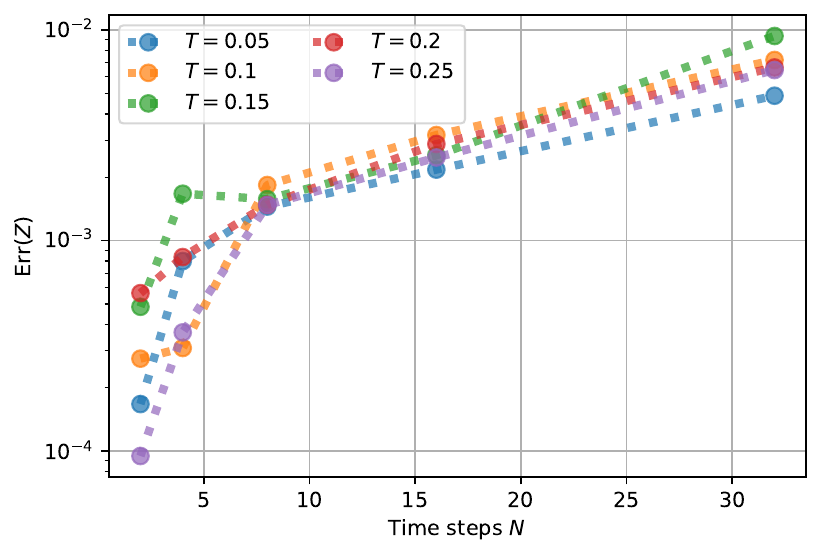}
        \caption{{\it Direct extension:} Errors vs. time steps}
        \label{fig11a}
    \end{subfigure}
    \hspace{0.2cm}
    \begin{subfigure}{0.45\textwidth}
        \centering
        \includegraphics[width=\textwidth]{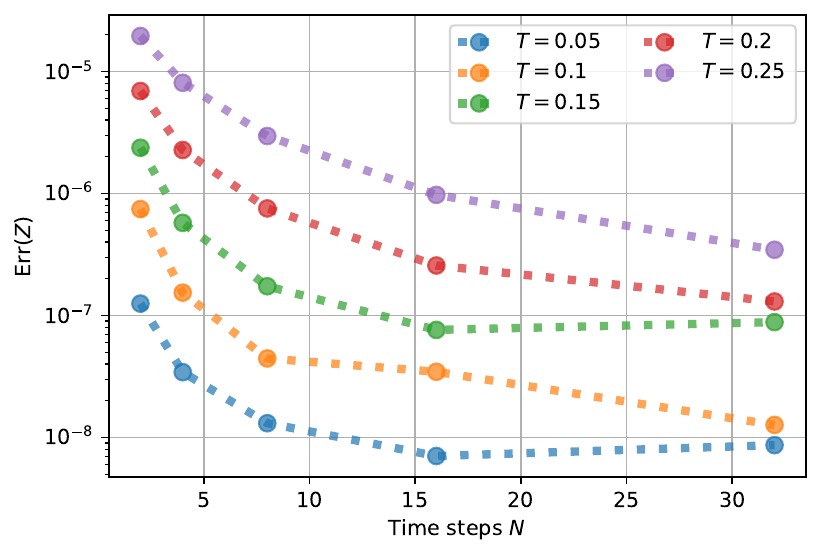}
        \caption{{\it Algorithm \ref{our_algo}}: Errors vs. time steps}
        \label{fig11b}
    \end{subfigure}
    \vskip\baselineskip
    \begin{subfigure}{0.45\textwidth}
        \centering
        \includegraphics[width=\textwidth]{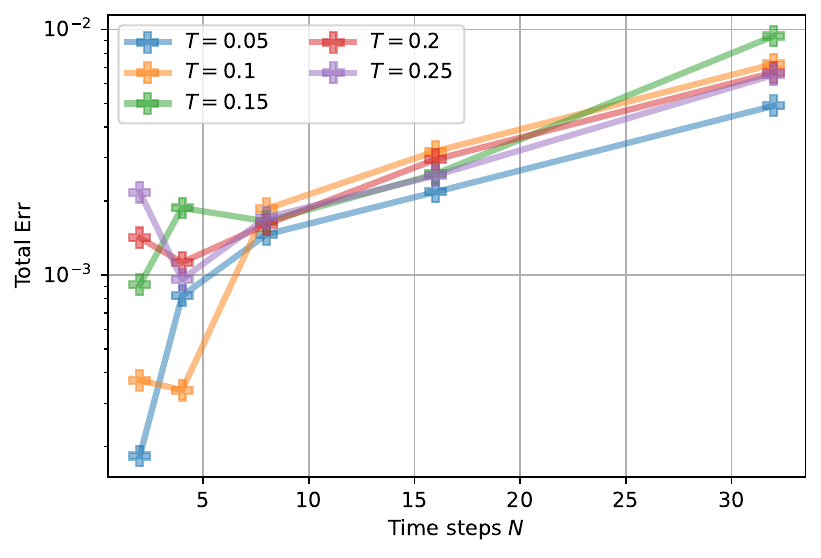}
        \caption{{\it Direct extension:} Errors vs. iteration steps}
        \label{fig11c}
    \end{subfigure}
    \hspace{0.2cm}
    \begin{subfigure}{0.45\textwidth}
        \centering
        \includegraphics[width=\textwidth]{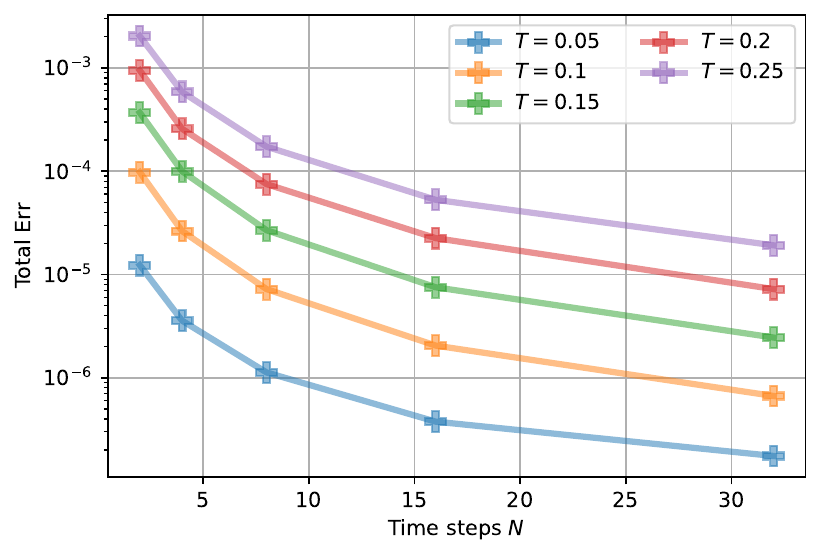}
        \caption{{\it Algorithm \ref{our_algo}}: Errors vs. iteration steps}
        \label{fig11d}
    \end{subfigure}
    \caption{Numerical results for Example \ref{example1} with varying $N$ and $T$, and $M=5$.}
    \label{fig11} 
\end{figure}

\subsection{FBSDE with coupled drift and driver}\label{example2}

In the following, we consider a coupled FBSDE that was previously studied in \cite{negyesi2024generalized,negyesi2025numerical}, where both drift and driver depend on $Y$ and $Z$, and thus it is again beyond the setting in \cite{bender2008time}. The FBSDE is given by
\begin{align}\label{ex1:fbsde}
    b(t, x, y, z) &= \kappa_y \bar{\sigma} y \mathbf{1}_{d_1} + \kappa_z z^\top,\quad
    \sigma(t, x, y) = \bar{\sigma} y I_{d_1},\quad
    g(x) = \sum_{i=1}^{d_1} \sin(x_i), \\
    f(t, x, y, z)
    &=
    -ry
    + \frac{1}{2} e^{-3r(T - t)} \bar{\sigma}^2
    \left(\sum_{i=1}^{d_1} \sin(x_i)\right)^3 \notag\\
    &\quad
    - \kappa_y \sum_{i=1}^{d_3} z_i
    - \kappa_z \bar{\sigma} e^{-3r (T-t)}
    \sum_{i=1}^{d_1} \sin(x_i)
    \sum_{i=1}^{d_1} \cos^2 (x_i).
\end{align}
Here, $\mathbf{1}_{d_1}$ denotes the vector of ones in $\mathbb{R}^{d_1}$, $I_{d_1}$ denotes the $d_1\times d_1$ identity matrix, and we take $d_2=1$ and $d_1=d_3=4$. The corresponding decoupling fields are given by
\begin{align}\label{ex1:fbsde_sol}
y(t,x)
&=
e^{-r(T-t)}
\sum_{j=1}^{d_1}\sin(x_j), \\
z_i(t,x)
&=
e^{-2r(T-t)}
\bar{\sigma}
\left(\sum_{j=1}^{d_1}\sin(x_j)\right)
\cos(x_i),
\qquad i=1,\ldots,d_3.
\end{align}

The parameters $\kappa_y$, $\kappa_z$, and $\bar{\sigma}$ determine the strength of the couplings in the forward and backward equations. To satisfy Assumption~\ref{assume:weakandmono}, we choose $\kappa_y=\kappa_z=0.1$, $\bar{\sigma}=1$, and $T=0.25$. In addition, we set $x_0=\pi/4$ and $r=1$. Figure~\ref{fig2} reports the numerical results for this example, where the choices of $N$ and $M$ are the same as those used in Figure~\ref{fig1}.

Similar to the numerical results of Example~\ref{example1}, the Direct Extension fails to converge as the number of time steps increases: the error $\mathrm{Err}(Z)$ grows with $N$ and eventually dominates the total error. In contrast, Algorithm~\ref{our_algo} exhibits clear convergence of all error components. The observed convergence rates are at least first order and appear to be slightly better than those predicted by the theoretical analysis. In particular, both $\mathrm{Err}(X)$ and $\mathrm{Err}(Y)$ reach the level of $10^{-5}$ when $N=32$, while $\mathrm{Err}(Z)$ is even smaller, reaching the level of $10^{-6}$. Regarding convergence with respect to the number of Markovian iterations, Figure~\ref{fig2d} shows rapid convergence of all error components when Algorithm~\ref{our_algo} is used. By contrast, Figure~\ref{fig2c} exhibits divergent behaviour: the total error increases with the number of iterations, primarily due to the growth of $\mathrm{Err}(Z)$.

These results further suggest that approximating $Z$ solely by solving the regression problem 
$
\min_{ \{\beta^{i, m} \} }
\mathbb{E}\!\left[
\left|
h^{-1}Y_{i+1}^{\pi,m}\Delta W_i
-
v_i^{\pi,m}
\right|^2
\right]
$
might not be sufficient for obtaining a convergent scheme in the presence of $Z$-couplings. By contrast, incorporating the additional differentiation relation
$
v_i^{\pi,m}(\cdot)
=
\partial_x u_i^{\pi,m}(\cdot)\,
\sigma\!\bigl(t_i,\cdot,u_i^{\pi,m}(\cdot)\bigr)
$
appears to effectively resolve this issue and to yield accurate and convergent approximations for this class of FBSDEs.

 \begin{figure}[htp]
    \centering
    \begin{subfigure}{0.45\textwidth}
        \centering
        \includegraphics[width=\textwidth]{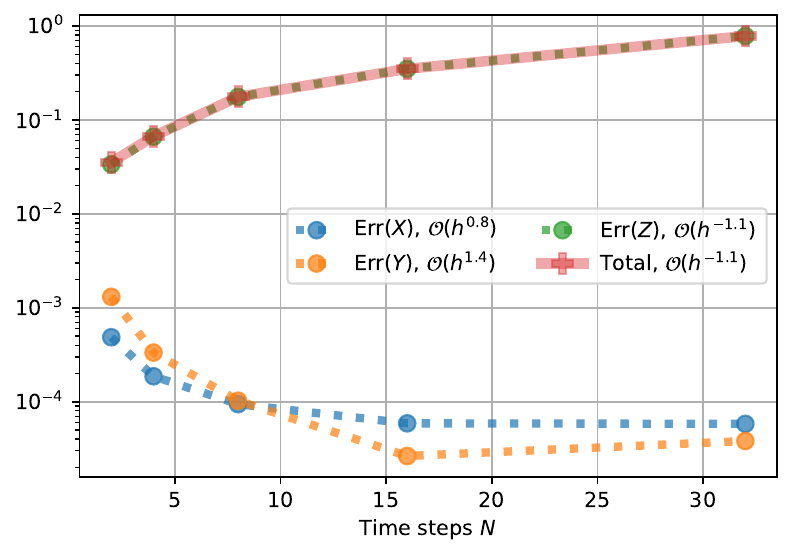}
        \caption{{\it Direct extension:} Errors vs. time steps}
        \label{fig2a}
    \end{subfigure}
    \hspace{0.2cm}
    \begin{subfigure}{0.45\textwidth}
        \centering
        \includegraphics[width=\textwidth]{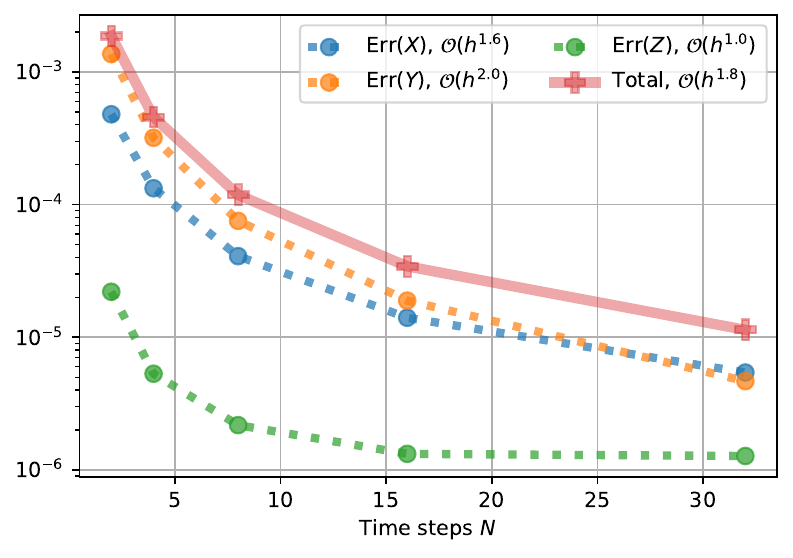}
        \caption{{\it Algorithm \ref{our_algo}}: Errors vs. time steps}
        \label{fig2b}
    \end{subfigure}
    \vskip\baselineskip
    \begin{subfigure}{0.45\textwidth}
        \centering
        \includegraphics[width=\textwidth]{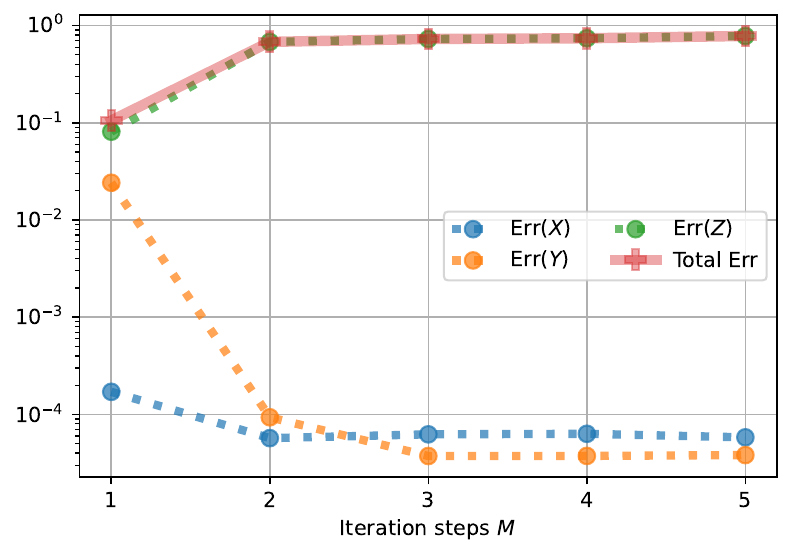}
        \caption{{\it Direct extension:} Errors vs. iteration steps}
        \label{fig2c}
    \end{subfigure}
    \hspace{0.2cm}
    \begin{subfigure}{0.45\textwidth}
        \centering
        \includegraphics[width=\textwidth]{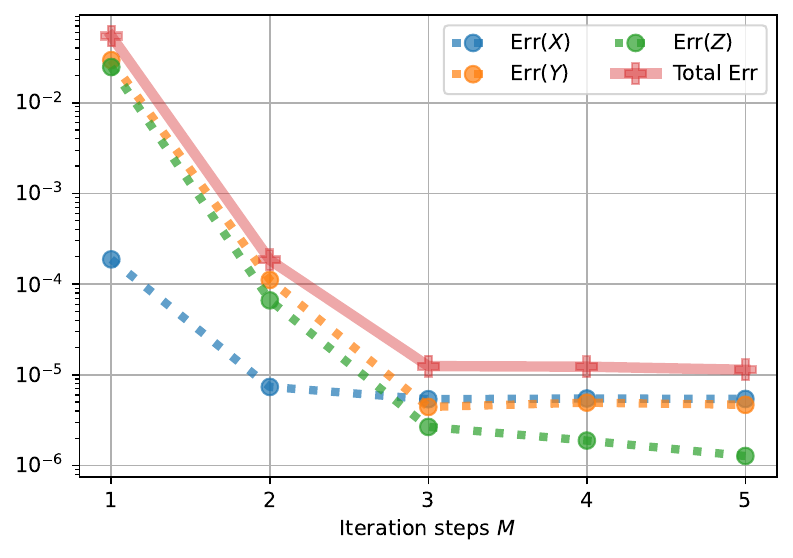}
        \caption{{\it Algorithm \ref{our_algo}}: Errors vs. iteration steps}
        \label{fig2d}
    \end{subfigure}
    \caption{Numerical results for Example \ref{example2} with varying $N$ and $M$, and $T=0.25$.}
    \label{fig2}
\end{figure}

\appendix
\section{Appendix}\label{sec:appendix}

\setcounter{equation}{0}
\renewcommand{\theequation}{A.\arabic{equation}}

In this appendix, we provide a further discussion of the tuple $(\Check{X}^{\pi,m}_i, \Check{Y}^{\pi,m}_{i}, \Check{Z}^{\pi,m}_i)$ used in practical numerical implementations. In particular, we first show that the tuple obtained from Algorithm~\ref{our_algo} is equivalent to the 
auxiliary numerical scheme introduced below. Subsequently, based on this auxiliary scheme, we derive an estimate between $(\Check{X}^{\pi,m}_i, \Check{Y}^{\pi,m}_{i}, \Check{Z}^{\pi,m}_i)$ and $(X^{\pi,m}_i, Y^{\pi,m}_{i}, Z^{\pi,m}_i)$, as mentioned in \eqref{aux_estimate}.

Let us first describe the auxiliary numerical scheme. Suppose that $\Check{X}^{\pi,m}_i$ is generated via the Euler--Maruyama scheme with initial condition $\Check{X}^{\pi,m}_0 = x_0$, and consider the initialization $\Check{u}^{\pi,0}_i = \Check{v}^{\pi,0}_i = 0$, which are the same setting as in Algorithm~\ref{our_algo}. Then, similarly to Lemma~\ref{lem:estimate_two_Y} and Lemma~\ref{lem:moment_yandz}, we consider the following backward equation associated with the given process $\Check{X}^{\pi,m}_i$,
\begin{equation}
\begin{aligned}\label{discreteBSDE}
\Check{Y}^{\pi,m}_i
& = \Check{Y}^{\pi,m}_{i+1}
+ h\, f\!\left(t_i, \Check{X}^{\pi,m}_i,\Check{Y}^{\pi,m}_{i+1},\Check{Z}^{\pi,m}_i\right)
- \int_{t_i}^{t_{i+1}} Z_s \,\mathrm{d}W_s \\
& = \Check{Y}^{\pi,m}_{i+1}
+ h\, f\!\left(t_i,\Check{X}^{\pi,m}_i,\Check{Y}^{\pi,m}_{i+1},\Check{Z}^{\pi,m}_i\right)
- \Check{Z}^{\pi,m}_i \Delta W_i
- \Delta \Check{M}^{\pi,m}_i,
\end{aligned}
\end{equation}
where the stochastic integral is decomposed into two parts, and $\Check{Z}^{\pi,m}_i$ is given by
\begin{equation}
\begin{aligned}
\Check{Z}^{\pi,m}_i
&= \frac{1}{h}\,\mathbb{E}_{t_i}\!\left[\Check{Y}^{\pi,m}_{i+1}\Delta W_i\right]
+ \mathbb{E}_{t_i}\!\left[
f\!\left(t_i,\Check{X}^{\pi,m}_i,\Check{Y}^{\pi,m}_{i+1},\Check{Z}^{\pi,m}_i\right)\Delta W_i
\right].
\end{aligned}
\end{equation}
Here $\Delta \Check{M}^{\pi,m}_i$ is a martingale increment process satisfying
\begin{equation}
\mathbb{E}_{t_i}\!\left[\Delta \Check{M}^{\pi,m}_i\right]=0,
\qquad
\mathbb{E}_{t_i}\!\left[\Delta \Check{M}^{\pi,m}_i\,\Delta W_i\right]=0,
\qquad
\mathbb{E}_{t_i}\!\left[\left|\Delta \Check{M}^{\pi,m}_i\right|^2\right]<\infty.
\end{equation}
For the $\Check{Y}^{\pi,m}_i$ process, we take conditional expectation of \eqref{discreteBSDE} and yields
\begin{equation}
\Check{Y}^{\pi,m}_i
=
\mathbb{E}_{t_i}\!\left[\Check{Y}^{\pi,m}_{i+1}\right]
+
h\,\mathbb{E}_{t_i}\!\left[
f\!\left(t_i,\Check{X}^{\pi,m}_i,\Check{Y}^{\pi,m}_{i+1},\Check{Z}^{\pi,m}_i\right)
\right].
\end{equation}
Summarizing above derivation, we have the following auxiliary scheme for $(\Check{X}^{\pi,m}_i, \Check{Y}^{\pi,m}_{i}, \Check{Z}^{\pi,m}_i)$,
\begin{equation}\label{app:aux_scheme}
\left\{
\begin{aligned}
& \Check{X}_0^{\pi, m} = x_0, \\
& \Check{X}_{i+1}^{\pi, m} = \Check{X}_i^{\pi, m} + b\big(t_i, \Check{X}_i^{\pi, m}, \Check{u}_i^{\pi, m-1} (\Check{X}_i^{\pi, m}), \Check{v}_i^{\pi, m-1} (\Check{X}_i^{\pi, m}) \big) h + \sigma \big( t_i, \Check{X}_i^{\pi, m}, \Check{v}_i^{\pi, m-1}(\Check{X}_i^{\pi, m}) \big) \Delta W_{i}, \\
& \Check{Y}_N^{\pi, m} = g (\Check{X}_N^{\pi, m}), \\
& \Check{Z}_i^{\pi, m} = h^{-1} \mathbb{E}_{t_i} [ \Check{Y}_{i+1}^{\pi, m} \Delta W_{i} ] 
+ \mathbb{E}_{t_i} \left[
f \left(t_i, \Check{X}^{\pi,m}_i, \Check{Y}^{\pi,m}_{i+1},\Check{Z}^{\pi,m}_i\right)\Delta W_i \right] \\
& \Check{v}_i^{\pi, m} (\Check{X}_i^{\pi, m}) = \Check{Z}_i^{\pi, m},  \\
& \Check{Y}_i^{\pi, m} = \mathbb{E}_{t_i}\!\left[\Check{Y}^{\pi,m}_{i+1}\right]
+
h\,\mathbb{E}_{t_i}\!\left[
f\!\left(t_i,\Check{X}^{\pi,m}_i,\Check{Y}^{\pi,m}_{i+1},\Check{Z}^{\pi,m}_i\right)
\right] , \\
& \Check{u}_i^{\pi, m} (\Check{X}_i^{\pi, m}) = \Check{Y}_i^{\pi, m}, 
\end{aligned}\right.
\end{equation}

\begin{appassumption}\label{app:assume}
The driver $f(t, x, y, z)$ admits a decomposition $f(t, x, y, z) = f_1(t, x, y) + f_2(t, x, z) $.
\end{appassumption}

\begin{appproposition}\label{app:prop}
Let Assumption \ref{app:assume} hold. Then the the tuple $(\Check{X}^{\pi,m}_i, \Check{Y}^{\pi,m}_{i}, \Check{Z}^{\pi,m}_i)$ obtained from Algorithm \ref{our_algo} coincide with the one given by the scheme \eqref{app:aux_scheme}, for all the $i$ and $m$.
\end{appproposition}

\begin{proof}

Compared with Algorithm \ref{our_algo} and the scheme \eqref{app:aux_scheme}, it is clear that both $\Check{X}_{i+1}^{\pi, m}$ are the same at the first iteration $m=1$. Therefore, it is sufficient to show that at $i=N-1$, the optimal solution to the problem \eqref{algo:opt_obj} admits the conditional expectations representation as in the scheme \eqref{app:aux_scheme}, and the claim follows via a recursive argument. 

Fix $m=1$ and $i = N-1$. It is obvious that both $\Check{Y}_{N}^{\pi, m}$ from Algorithm \ref{our_algo} and scheme \eqref{app:aux_scheme} are the same, and accordingly, let us recall the optimisation problem \eqref{algo:opt_obj} that needs to be solved at $i = N-1$,
\begin{equation}\label{opt_problem}
\min_{y_i, z_i \in L^2(\mathcal{F}_{t_i})}
\mathbb{E}\Bigl[
\bigl(
\Check{Y}^{\pi,m}_{i+1}
-
\bigl(
y_i
- h f(t_i,\Check{X}^{\pi,m}_i,\Check{Y}^{\pi,m}_{i+1},z_i)
+ z_i \Delta W_i
\bigr)
\bigr)^2
\Bigr].
\end{equation}
Let $\ell(y_i,z_i)$ be the value of the objective functional for an admissible pair $(y_i,z_i)$. Then, using equation \eqref{discreteBSDE} for $\Check{Y}^{\pi,m}_{i+1}$, we can calculate
\begin{equation}\label{expand_opt}
\begin{aligned}
\ell(y_i,z_i)
&\coloneqq
\mathbb{E}\Bigl[
\bigl(
\Check{Y}^{\pi,m}_{i+1}
-
\bigl(
y_i
- h f(t_i,\Check{X}^{\pi,m}_i,\Check{Y}^{\pi,m}_{i+1},z_i)
+ z_i \Delta W_i
\bigr)
\bigr)^2
\Bigr] \\
&=
\mathbb{E}\Bigl[
\Bigl(
\Check{Y}^{\pi,m}_i
- h f(t_i,\Check{X}^{\pi,m}_i,\Check{Y}^{\pi,m}_{i+1},\Check{Z}^{\pi,m}_i)
-
\bigl(
y_i
- h f(t_i,\Check{X}^{\pi,m}_i,\Check{Y}^{\pi,m}_{i+1},z_i)
\bigr) \\
&\qquad\qquad\qquad\qquad
+ \bigl(\Check{Z}^{\pi,m}_i-z_i\bigr)\Delta W_i
+ \Delta \Check{M}^{\pi,m}_i
\Bigr)^2
\Bigr] \\
&=
\mathbb{E}\Bigl[
\Bigl(
\Check{Y}^{\pi,m}_i - y_i
+ h f(t_i,\Check{X}^{\pi,m}_i,\Check{Y}^{\pi,m}_{i+1},z_i)
- h f(t_i,\Check{X}^{\pi,m}_i,\Check{Y}^{\pi,m}_{i+1},\Check{Z}^{\pi,m}_i)
\Bigr)^2
\Bigr] \\
&\quad
+ h\,\mathbb{E}\bigl[\lvert \Check{Z}^{\pi,m}_i-z_i \rvert^2\bigr]
+ \mathbb{E}\bigl[\lvert \Delta \Check{M}^{\pi,m}_i\rvert^2\bigr] \\
&\quad
+ 2h\,\mathbb{E}\Bigl[
\bigl(
f(t_i,\Check{X}^{\pi,m}_i,\Check{Y}^{\pi,m}_{i+1},z_i)
-
f(t_i,\Check{X}^{\pi,m}_i,\Check{Y}^{\pi,m}_{i+1},\Check{Z}^{\pi,m}_i)
\bigr)
\Delta \Check{M}^{\pi,m}_i
\Bigr] \\
&\quad
+ 2h\,\mathbb{E}\Bigl[
\bigl(
f(t_i,\Check{X}^{\pi,m}_i,\Check{Y}^{\pi,m}_{i+1},z_i)
-
f(t_i,\Check{X}^{\pi,m}_i,\Check{Y}^{\pi,m}_{i+1},\Check{Z}^{\pi,m}_i)
\bigr)
\bigl(\Check{Z}^{\pi,m}_i-z_i\bigr)\Delta W_i
\Bigr] 
\end{aligned}
\end{equation}
Here several cross terms vanish due to the properties of the martingale increment and the Brownian increment. Note that $\mathbb{E}\bigl[\lvert \Delta \Check{M}^{\pi,m}_i\rvert^2\bigr] $ does not depend on the choice of $(y_i,z_i)$.

With Assumption \ref{app:assume}, we can further write $\ell(y_i,z_i)$ as
\begin{equation}\label{expand_opt2}
\begin{aligned}
\ell(y_i,z_i)
&=
\mathbb{E}\Bigl[
\Bigl(
\Check{Y}^{\pi,m}_i - y_i
+ h f(t_i,\Check{X}^{\pi,m}_i,\Check{Y}^{\pi,m}_{i+1},z_i)
- h f(t_i,\Check{X}^{\pi,m}_i,\Check{Y}^{\pi,m}_{i+1},\Check{Z}^{\pi,m}_i)
\Bigr)^2
\Bigr] \\
&\quad
+ h\,\mathbb{E}\bigl[\lvert \Check{Z}^{\pi,m}_i-z_i \rvert^2\bigr]
+ \mathbb{E}\bigl[\lvert \Delta \Check{M}^{\pi,m}_i\rvert^2\bigr].
\end{aligned}
\end{equation}
since we can decompose $f(t_i,\Check{X}^{\pi,m}_i,\Check{Y}^{\pi,m}_{i+1},z_i)$ into a term $f_1(t_i,\Check{X}^{\pi,m}_i,\Check{Y}^{\pi,m}_{i+1})$ that can be canceled out, and a measurable term $f_2(t_i,\Check{X}^{\pi,m}_i,z_i)$, and we do this similarly for $f(t_i,\Check{X}^{\pi,m}_i,\Check{Y}^{\pi,m}_{i+1},\Check{Z}^{\pi,m}_i)$. The last step is to use the properties of the Brownian increment and the martingale increment.

With equation \eqref{expand_opt2} in hand, we conclude that $z_i^*=\Check{Z}^{\pi,m}_i$ due to the non-negativity of the second term on the right-hand side. Accordingly, the non-negativity of the first term together with $z_i^*=\Check{Z}^{\pi,m}_i$ gives $y_i^*=\Check{Y}^{\pi,m}_i$.

Therefore, the optimal solution $(y_i^*,z_i^*)$ is the same as 
$\bigl(\Check{Y}^{\pi,m}_i,\Check{Z}^{\pi,m}_i\bigr)$ that is given by \eqref{app:aux_scheme}. Then we can repeat the above arguments for $i=N-2, \cdots, 0$, and for $m\geq 2$, and prove the claim.

\end{proof}

\begin{appremark}
We make the following two remarks.
\begin{enumerate}[label=(\arabic*).]
\item Suppose that the driver $f$ does not depend on $y$, which is a stronger assumption than Assumption~\ref{app:assume}. In this case, \eqref{app:aux_scheme} coincides with \eqref{full_scheme}, since the second term of the expression for $\Check{Z}_i^{\pi,m}$ vanishes. Consequently,
$
(\Check{X}^{\pi,m}_i,\Check{Y}^{\pi,m}_i,\Check{Z}^{\pi,m}_i)
=
(X^{\pi,m}_i,Y^{\pi,m}_i,Z^{\pi,m}_i),
$
and Algorithm~\ref{our_algo} produces the desired tuple
$
(X^{\pi,m}_i,Y^{\pi,m}_i,Z^{\pi,m}_i),
$
according to Proposition~\ref{app:prop}.

We emphasize that, even in this special case, the problem remains outside the framework of \cite{bender2008time}, since the forward drift coefficient $b$ is allowed to depend on the $Z$-process.

\item As shown in the proof of Proposition~\ref{app:prop}, Assumption~\ref{app:assume} serves as a sufficient condition which essentially excludes the possibility that the last two terms in \eqref{expand_opt} become negative. It may be possible to obtain a more delicate analysis of these two terms, for instance, through an asymptotic analysis as $h \to 0$ together with additional structural assumptions or prior information on the driver $f$. We leave such an investigation for future research.
\end{enumerate}
\end{appremark}

\begin{applemma} 
Suppose Proposition \ref{app:prop} holds. Then we can derive the following estimate 
\begin{equation}\label{app:estimate_two_sol}
\mathbb{E}\bigl[|\Check{X}^{\pi,m}_i-X^{\pi,m}_i|^2\bigr]
+ 
\mathbb{E}\bigl[|\Check{Y}^{\pi,m}_i-Y^{\pi,m}_i|^2\bigr]
+ 
\mathbb{E}\bigl[|\Check{Z}^{\pi,m}_i-\hat{Z}^{\pi,m}_i|^2\bigr]
\leq Ch,  \quad \forall\, m, i,
\end{equation}
where $C$ is a constant independent of the step size $h$, and the tuples $(\Check{X}^{\pi,m}_i, \Check{Y}^{\pi,m}_i, \Check{Z}^{\pi,m}_i)$, $(X^{\pi,m}_i, Y^{\pi,m}_i, \hat{Z}^{\pi,m}_i)$ are given by Algorithm \ref{our_algo} and the scheme \eqref{full_scheme}, respectively. 
\end{applemma}

\begin{proof}
Due to Proposition \ref{app:prop}, $(\Check{X}^{\pi,m}_i, \Check{Y}^{\pi,m}_i, \Check{Z}^{\pi,m}_i)$ obtained from Algorithm \ref{our_algo}, admits representation as in the scheme \eqref{app:aux_scheme}, therefore we can use such representation for deriving the estimate.

Using the conditional expectation of the $Z$ processes from \eqref{app:aux_scheme} and \eqref{full_scheme}, and Cauchy--Schwarz inequality, we obtain
\begin{equation}
\begin{aligned}
& |\Check{Z}^{\pi,m}_i - \hat{Z}^{\pi,m}_i|^2 \\
= & 
\left| \frac{1}{h}\,\mathbb{E}_{t_i}\!\left[(\Check{Y}^{\pi,m}_{i+1}-Y^{\pi,m}_{i+1})\Delta W_i\right]
+
\mathbb{E}_{t_i}\!\left[f(t_i, \Check{X}^{\pi,m}_i,\Check{Y}^{\pi,m}_{i+1}, \Check{Z}^{\pi,m}_i) \,\Delta W_i\right]  \right|^2 \\
\leq & 
2\left|
\frac{1}{h}\,\mathbb{E}_{t_i}\!\left[(\Check{Y}^{\pi,m}_{i+1}-Y^{\pi,m}_{i+1})\Delta W_i\right]
\right|^2
+
2\left|
\mathbb{E}_{t_i}\!\left[f(t_i, \Check{X}^{\pi,m}_i,\Check{Y}^{\pi,m}_{i+1}, \Check{Z}^{\pi,m}_i )\,\Delta W_i\right]
\right|^2 \\
\leq & 
\frac{2}{h^2}
\mathbb{E}_{t_i}\!\left[|\Check{Y}^{\pi,m}_{i+1}-Y^{\pi,m}_{i+1}|^2\right]
\mathbb{E}_{t_i}\!\left[|\Delta W_i|^2\right]  
+
2
\mathbb{E}_{t_i}\!\left[
|f(t_i,\Check{X}^{\pi,m}_i,\Check{Y}^{\pi,m}_{i+1},\Check{Z}^{\pi,m}_i)|^2
\right]
\mathbb{E}_{t_i}\!\left[|\Delta W_i|^2\right]  \\
= & 
\frac{2}{h} \,\mathbb{E}_{t_i}\!\left[|\Check{Y}^{\pi,m}_{i+1}-Y^{\pi,m}_{i+1}|^2\right]
+
2h\,\mathbb{E}_{t_i}\!\left[|f(t_i, \Check{X}^{\pi,m}_i,\Check{Y}^{\pi,m}_{i+1}, \Check{Z}^{\pi,m}_i )|^2\right].
\end{aligned}
\end{equation}
Taking the expectation on both sides and making use of the Lipschitz continuity of $f$, we can derive
\begin{equation}
\begin{aligned}
\mathbb{E}[|\Check{Z}^{\pi,m}_i-\hat{Z}^{\pi,m}_i|^2]
&\leq
\frac{2}{h}\mathbb{E}[|\Check{Y}^{\pi,m}_{i+1}-Y^{\pi,m}_{i+1}|^2]
+2h\mathbb{E}[|f(t_i,\Check{X}^{\pi,m}_i,\Check{Y}^{\pi,m}_{i+1},\Check{Z}^{\pi,m}_i)|^2] \\
&\leq
\frac{2}{h}\mathbb{E}[|\Check{Y}^{\pi,m}_{i+1}-Y^{\pi,m}_{i+1}|^2] \\
&\quad
+4h\mathbb{E}[
|f(t_i,\Check{X}^{\pi,m}_i,\Check{Y}^{\pi,m}_{i+1},\Check{Z}^{\pi,m}_i)
-f(t_i,X^{\pi,m}_i,Y^{\pi,m}_{i+1},\hat{Z}^{\pi,m}_i)|^2] \\
&\quad
+4h\mathbb{E}[
|f(t_i,X^{\pi,m}_i,Y^{\pi,m}_{i+1},\hat{Z}^{\pi,m}_i)|^2] \\
&\leq
\frac{2}{h}\mathbb{E}[|\Check{Y}^{\pi,m}_{i+1}-Y^{\pi,m}_{i+1}|^2] \\
&\quad
+ 12h f_x \mathbb{E}[|\Check{X}^{\pi,m}_i-X^{\pi,m}_i|^2]
+ 12h K \mathbb{E}[|\Check{Y}^{\pi,m}_{i+1}-Y^{\pi,m}_{i+1}|^2] \\
&\quad
+ 12h f_z \mathbb{E}[|\Check{Z}^{\pi,m}_i-\hat{Z}^{\pi,m}_i|^2]
+4h\mathbb{E}[
|f(t_i,X^{\pi,m}_i,Y^{\pi,m}_{i+1},\hat{Z}^{\pi,m}_i)|^2].
\end{aligned}
\end{equation}
We arrange the terms and notice that $ 1/2 < 1-12h f_z < 1$ for sufficiently small $h > 0$, and therefore we can derive
\begin{equation}\label{app:est_Z}
\begin{aligned}
\mathbb{E}[|\Check{Z}^{\pi,m}_i-\hat{Z}^{\pi,m}_i|^2]
\leq &
( \frac{4}{h} + 24hK )
\mathbb{E}[|\Check{Y}^{\pi,m}_{i+1}-Y^{\pi,m}_{i+1}|^2] 
+ 24hf_x
\mathbb{E}[|\Check{X}^{\pi,m}_i-X^{\pi,m}_i|^2]
+ Ch
\end{aligned}
\end{equation}
where we define the constant $C \coloneqq 8 \mathbb{E} [ |f (t_i, X_i^{\pi, m}, Y_{i+1}^{\pi, m}, \hat{Z}_i^{\pi, m} ) |^2 ] $ that is independent of $h$.

Similarly, for the $Y$ processes, using Jensen's inequality and Lipschitz continuity of $f$ leads to
\begin{equation}\label{app:est_Y}
\begin{aligned}
& \mathbb{E}\bigl[|\Check{Y}^{\pi,m}_i-Y^{\pi,m}_i|^2\bigr]  \\
= & \mathbb{E}\!\left[\left|\mathbb{E}_{t_i}\!\Big(\Check{Y}^{\pi,m}_{i+1}-Y^{\pi,m}_{i+1}
+ h\big(f(t_i, \Check{X}^{\pi,m}_i,\Check{Y}^{\pi,m}_{i+1},\Check{Z}^{\pi,m}_i) - f(t_i, X^{\pi,m}_i,Y^{\pi,m}_{i+1},\hat{Z}^{\pi,m}_i)\big)\Big)\right|^2\right] \\
\leq &  \mathbb{E}\!\left[\left|\Check{Y}^{\pi,m}_{i+1}-Y^{\pi,m}_{i+1}
+ h\big(f(t_i, \Check{X}^{\pi,m}_i,\Check{Y}^{\pi,m}_{i+1},\Check{Z}^{\pi,m}_i)-f(t_i, X^{\pi,m}_i,Y^{\pi,m}_{i+1},\hat{Z}^{\pi,m}_i)\big)\right|^2\right] \\
\leq &  3(1+h \sqrt{K} ) ^2\,\mathbb{E}\bigl[|\Check{Y}^{\pi,m}_{i+1}-Y^{\pi,m}_{i+1}|^2\bigr]  + 3h^2 f_x\,\mathbb{E}\bigl[|\Check{X}^{\pi,m}_i-X^{\pi,m}_i|^2\bigr]
+ 3h^2 f_z\, \mathbb{E}\bigl[|\Check{Z}^{\pi,m}_i-\hat{Z}^{\pi,m}_i|^2\bigr].
\end{aligned}
\end{equation}
Now let us fix $m = 1$. Then it is clear that $\Check{X}^{\pi,m}_i = X^{\pi,m}_i$ for all the $i$, due to the same initialization of the approximated decoupling fields, and the fact that $\Check{X}^{\pi,m}_0 = X^{\pi,m}_0$, and accordingly we have $\Check{Y}^{\pi,m}_N = Y^{\pi,m}_N $ at $m=1$. Then, starting from $i=N-1$, we can apply inequalities \eqref{app:est_Z} and \eqref{app:est_Y} alternately to derive the estimates
\begin{equation}\label{app:est_YandZ}
\mathbb{E}\bigl[|\Check{Z}^{\pi,m}_i-\hat{Z}^{\pi,m}_i|^2\bigr] \leq Ch ,\quad
\mathbb{E}\bigl[|\Check{Y}^{\pi,m}_i-\hat{Y}^{\pi,m}_i|^2\bigr] \leq Ch^3, \quad \forall i.
\end{equation}
In particular, it is also clear that \eqref{app:est_YandZ} also hold for $m>1$ if we can show that $\mathbb{E}\bigl[|\Check{X}^{\pi,m}_i-\hat{X}^{\pi,m}_i|^2\bigr] \leq Ch^2 $ for all $i$ and $m>1$.

To derive the estimate for the $X$ process, let us assume that \eqref{app:est_YandZ} holds at $m=1$. Then, we can make use of Lemma \ref{lem:estimate_two_X}, where we let $X^{\pi, m+1}_i = X_i^1$ and $\Check{X}^{\pi, m+1}_i = X_i^2$ and set
\begin{equation}
\varphi^1 = u^{\pi, m}_i ,\quad 
\xi^1 = v^{\pi, m}_i ,\quad
\varphi^2 = \Check{u}^{\pi, m}_i ,\quad 
\xi^2 = \Check{v}^{\pi, m}_i
\end{equation}
Note that $A_1$ and $A_2$ can be upper bounded by fixed constants, and we can simply choose $\lambda_0 = \lambda_1 = 1$. Moreover, Theorem \ref{estimate_uniform_lipschitz} suggests that the Lipschitz constants for $u^{\pi, m}_i$ and $v^{\pi, m}_i$ are upper bounded. Now, Applying Lemma \ref{lem:estimate_two_X} and taking the expectation for both sides, we obtain
\begin{equation}\label{app:estimate_X0}
\begin{aligned}
\mathbb{E}\bigl[|X^{\pi,m+1}_{i+1}-\Check{X}^{\pi,m+1}_{i+1}|^2\bigr]
&\leq
(1+Ch) 
\mathbb{E}\bigl[|X^{\pi,m+1}_{i}-\Check{X}^{\pi,m+1}_{i}|^2\bigr] \\
&\quad
+ Ch\,\mathbb{E}\bigl[
|u^{\pi,m}_i(\Check{X}^{\pi,m+1}_i)
-\Check{u}^{\pi,m}_i(\Check{X}^{\pi,m+1}_i)|^2
\bigr] \\
&\quad
+ Ch\,\mathbb{E}\bigl[
|v^{\pi,m}_i(\Check{X}^{\pi,m+1}_i)
-\Check{v}^{\pi,m}_i(\Check{X}^{\pi,m+1}_i)|^2
\bigr].
\end{aligned}
\end{equation}
where $C$ denotes the generic constant independent of $h$.
 
Applying the discrete Gronwall inequality to \eqref{app:estimate_X} and noticing that $\mathbb{E} \left[ | X^{\pi, m}_{0} - \Check{X}^{\pi, m}_{0} |^2 \right] = 0 $, we obtain
\begin{equation}\label{app:estimate_X}
\begin{aligned}
\mathbb{E}\bigl[|X^{\pi,m+1}_{i}-\Check{X}^{\pi,m+1}_{i}|^2\bigr]
&\leq
Ch\sum_{k=0}^{i-1}
\mathbb{E}\bigl[
|u^{\pi,m}_k(\Check{X}^{\pi,m+1}_k)
-\Check{u}^{\pi,m}_k(\Check{X}^{\pi,m+1}_k)|^2
\bigr] \\
&\quad
+
Ch\sum_{k=0}^{i-1}
\mathbb{E}\bigl[
|v^{\pi,m}_k(\Check{X}^{\pi,m+1}_k)
-\Check{v}^{\pi,m}_k(\Check{X}^{\pi,m+1}_k)|^2
\bigr].
\end{aligned}
\end{equation}
Next, we derive the estimates for $| u^{\pi,m}_i (x) - \Check{u}^{\pi,m}_i (x) |^2 $ and $| v^{\pi,m}_i (x) - \Check{v}^{\pi,m}_i (x) |^2 $, for any $x\in \mathbb{R}^d$ and all the $i$. To do this, we can define the schemes that are similar to \eqref{app:aux_scheme} and \eqref{full_scheme}, but with a chosen initial pair $(i_0, x)$, and derive the estimate below in a way that is analogous to \eqref{app:est_YandZ},  
\begin{equation}\label{app:compare_uandv}
| u^{\pi,m}_{i_0} (x) - \Check{u}^{\pi,m}_{i_0} (x) |^2 \leq Ch^3 ,\quad
| v^{\pi,m}_{i_0} (x) - \Check{v}^{\pi,m}_{i_0} (x) |^2 \leq Ch 
\end{equation}
where we can further replace the subscript $i_0$ with a general index $i$ as the right-hand side does not depend on $i_0$.

Using \eqref{app:estimate_X} and \eqref{app:compare_uandv} together, we can conclude that 
\begin{equation}\label{app:estimate_X_needed}
\mathbb{E} \left[ | X^{\pi, m+1}_{i} - \Check{X}^{\pi, m+1}_{i} |^2 \right]  \leq Ch^2 ,\quad \forall i,
\end{equation}
and therefore, the estimates \eqref{app:est_YandZ} hold for the $m+1$-th iteration, and repeating above arguments we have \eqref{app:estimate_X_needed} and \eqref{app:est_YandZ} for all the $m$.

\end{proof}

\paragraph{Acknowledgment}
The authors would like to thank Balint Negyesi for the many fruitful discussions. The first author would like to thank the China Scholarship Council (CSC) for the financial support.








\bibliographystyle{alpha}
\bibliography{ref.bib}

\end{document}